\documentclass[12pt,reqno]{amsart}


\usepackage[l2tabu, orthodox]{nag}
\usepackage[T1]{fontenc}
\usepackage[utf8]{inputenc}
\pdfminorversion=7

\usepackage[english]{babel}
\usepackage{csquotes}

\usepackage{microtype}
\usepackage{geometry}
\usepackage[stable]{footmisc}

\usepackage[%
  backend=biber,hyperref,
  bibencoding=auto,style=alphabetic,citestyle=alphabetic,
  doi=true,url=true,isbn=false,
  maxbibnames=99,giveninits=true
  ]{biblatex}

\AtEveryBibitem{\clearlist{language}}
\DeclareFieldFormat[book]{pages}{}
\renewbibmacro{in:}{%
  \ifentrytype{article}{}{\printtext{\bibstring{in}\intitlepunct}}}
\setcounter{biburllcpenalty}{7000}
\setcounter{biburlucpenalty}{8000}

\usepackage{amsmath}
\usepackage{mathtools}
\usepackage{fixmath}
\usepackage{amsthm}
\usepackage{amssymb}
\usepackage{tikz-cd}

\usepackage[unicode]{hyperref}

\usepackage{enumitem}
\setlist[enumerate]{nosep,label=\roman*),font=\normalfont}
\makeatletter
\let\savedenumerate=\enumerate%
\let\savedendenumerate=\endenumerate%
\renewenvironment{enumerate}{~\savedenumerate}{\savedendenumerate}
\makeatother

\usepackage{color}

\usepackage{chngcntr}

\usepackage{float}
\usepackage{caption}
\usepackage{subcaption}


\usepackage{amsmath}
\usepackage{amsthm}
\usepackage{amssymb}
\usepackage{mathrsfs}

\theoremstyle{plain}
\newtheorem{theorem}[equation]{Theorem}
\newtheorem*{theorem*}{Theorem}

\newtheorem{conjecture}[equation]{Conjecture}
\newtheorem{corollary}[equation]{Corollary}
\newtheorem*{corollary*}{Corollary}
\newtheorem{lemma}[equation]{Lemma}
\newtheorem{proposition}[equation]{Proposition}
\newtheorem*{proposition*}{Proposition}
\newtheorem*{lemma*}{Lemma}

\theoremstyle{remark}
\newtheorem{remark}[equation]{Remark}

\theoremstyle{definition}
\newtheorem{definition}[equation]{Definition}
\newtheorem{notation}[equation]{Notation}
\newtheorem*{notation*}{Notation}
\newtheorem{example}[equation]{Example}
\newtheorem{convention}[equation]{Convention}
\newtheorem{situation}[equation]{Situation}
\newtheorem*{situation*}{Situation}
\newtheorem{construction}[equation]{Construction}

\numberwithin{equation}{section}

	{\begin{proof}[Answer]\renewcommand{\qedsymbol}{}}%
	{\end{proof}}

\newcounter{char}
\setcounter{char}{1}
\loop\ifnum\value{char}<27
  \expandafter\edef\csname\Alph{char}bb\endcsname{\noexpand\mathbb{\Alph{char}}}
  \expandafter\edef\csname\Alph{char}ca\endcsname{\noexpand\mathcal{\Alph{char}}}
  \expandafter\edef\csname\Alph{char}sc\endcsname{\noexpand\mathscr{\Alph{char}}}
  \expandafter\edef\csname\Alph{char}fr\endcsname{\noexpand\mathfrak{\Alph{char}}}
  \expandafter\edef\csname\alph{char}fr\endcsname{\noexpand\mathfrak{\alph{char}}}
  \expandafter\edef\csname\Alph{char}bf\endcsname{\noexpand\mathbf{\Alph{char}}}
  \expandafter\edef\csname\alph{char}bf\endcsname{\noexpand\mathbf{\alph{char}}}
  \ifnum\value{char}=1\else
    \expandafter\edef\csname\Alph{char}\Alph{char}\endcsname{\noexpand\mathbb{\Alph{char}}}
  \fi
\addtocounter{char}{1}
\repeat

\let\oldAA\AA
\renewcommand*{\AA}{\ifmmode\mathbb{A}\else\oldAA\fi}

\DeclareMathOperator{\Emb}{Emb}
\DeclareMathOperator{\Fun}{Fun}
\DeclareMathOperator{\ho}{h}
\DeclareMathOperator{\Ho}{H}

\DeclareMathOperator*{\holim}{holim}
\DeclareMathOperator{\Hom}{Hom}
\DeclareMathOperator{\im}{im}
\DeclareMathOperator{\kernel}{ker}
\DeclareMathOperator{\Map}{Map}
\DeclareMathOperator{\Mor}{Mor}
\DeclareMathOperator{\Nv}{N}
\DeclareMathOperator{\Ob}{Ob}
\DeclareMathOperator{\Tot}{Tot}

\newcommand*{\Category}[1]{\mathrm{\mathbf{#1}}}

\newcommand*{\CGH}{\mathcal{C}\mathcal{G}\mathcal{H}}

\newcommand*{\CatDisc}[1]{\Category{Disc}_{\mathrm{#1}}}
  \newcommand*{\infCatDisc}[1]{\mathcal{D}\mathrm{isc}_{\mathrm{#1}}}
  \newcommand*{\CatDiscOver}[2]{\Category{Disc}_{\mathrm{#1} / \mathrm{#2}}}
  \newcommand*{\infCatDiscOver}[2]{\mathcal{D}\mathrm{isc}_{\mathrm{#1} / \mathrm{#2}}}

\newcommand*{\infFun}{\mathcal{F}un}

\newcommand*{\infHType}{\mathcal{H}o}

\newcommand*{\CatsInd}{\pmb{\mathrm{\Delta}}}

\newcommand*{\CatMan}[1]{\Category{Man}_{\mathrm{#1}}}
\newcommand*{\infCatMan}[1]{\mathcal{M}\mathrm{an}_{\mathrm{#1}}}
\newcommand*{\CatManOver}[2]{\Category{Man}_{\mathrm{#1} / \mathrm{#2}}}
\newcommand*{\infCatManOver}[2]{\mathcal{M}\mathrm{an}_{\mathrm{#1} / \mathrm{#2}}}
   
\newcommand*{\CatPow}[1]{\Category{Pow}(#1)}
  
\newcommand*{\Open}[2]{\Category{Open}^{#2}\left(#1 \right)}
  \newcommand*{\bOpen}[2]{\Category{Open}_{\partial}^{#2}\left(#1 \right)}
  \newcommand*{\fbOpen}[1]{\Category{Open}_{\partial}\left(#1 \right)_{\mathrm{fin}}}

\newcommand*{\quot}[2]{#1 / \mathord #2}
\newcommand*\restr[2]{\left.#1\right|_{#2}}
\newcommand*{\id}{\operatorname{id}}
\newcommand*{\pr}{\operatorname{pr}}

\newcommand*{\blank}{{-}}

\newcommand*{\E}{\mathrm{E}}

\newcommand*{\Conf}{\mathrm{Conf}}
\newcommand*{\cinfty}{\mathrm{C}^{\infty}}
\newcommand*{\Embcos}{\mathfrak{E}mb}
\newcommand*{\udisc}{\mathrm{D}}
\newcommand*{\ssimplex}{\mathrm{\Delta}}
\newcommand*{\I}{\mathrm{I}}

\newcommand*{\usphere}{\mathrm{S}}
\newcommand*{\op}{\mathrm{op}}
\newcommand*{\pT}{\mathrm{T}}


\usepackage{xspace}

\newcommand*{\eg}{e.g.\@\xspace}
\newcommand*{\ie}{i.e.\@\xspace}
\newcommand*{\cf}{cf.\@\xspace}

\newcommand*{\rom}[1]{\textrm{\romannumeral #1)}}


\geometry{a4paper}
\geometry{centering}
\geometry{text={16cm,25cm}}

\binoppenalty=10000
\relpenalty=10000

\setcounter{tocdepth}{1}
\setcounter{secnumdepth}{4}

\hypersetup{colorlinks=true, linktocpage}
\hypersetup{allcolors=blue}
\hypersetup{linkcolor=blue, citecolor=blue, urlcolor=magenta}

\title[Goodwillie's cosimplicial model for knots and its applications]{Goodwillie's cosimplicial model for the space of long knots and its applications}

\author{Yuqing Shi}
\thanks{Email: y.shi@uu.nl. Affiliation: Utrecht University, Utrecht, the Netherlands}

\date{\today}

\hypersetup{
  pdfauthor={Yuqing Shi},
  pdftitle={Goodwillie's cosimplicial model for knots and its applications},
  pdfcreator=pdflatex
}

\addbibresource{bibliography.bib}

\begin{document}


\begin{abstract}
  We work out the details of a correspondence observed by Goodwillie between cosimplicial spaces and good functors from a category of open subsets of the interval to the category of compactly generated weak Hausdorff spaces. 
  Using this, we compute the first page of the integral Bousfield--Kan homotopy spectral sequence of the tower of fibrations, given by the Taylor tower of the embedding functor associated to the space of long knots.
  Based on the methods of~Conant, we give a combinatorial interpretation of the differentials $d^1$ mapping into the diagonal terms, by introducing the notion of $(i, n)$-marked unitrivalent graphs.
\end{abstract}

\maketitle

\tableofcontents


\section*{Introduction}
\counterwithout{equation}{subsection}

Manifold calculus is a theory introduced by Goodwillie and Weiss, \cf \cite{Wei99} and \cite{GKW01}, that produces a sequence of functors approximating a given good functor (Definition \ref{def:goofun}) from the category of open subsets of a manifold to the category $\CGH$ of compactly generated weak Hausdorff spaces.
Let $M$ and $N$ be smooth manifolds. 
We are interested in studying the space $\Emb_{\partial}(M, N, f)$ of smooth embeddings that are germ equivalent on the boundary of $M$ to a fixed smooth boundary-preserving embedding~$f \colon M \to N$.
We topologise this space with the compact-open topology.
Now we can apply manifold calculus to the embedding functor~$\Emb_{\partial}(\blank, N, f) \colon \Open{M}{}^{\op}_{\partial} \to \CGH$, sending an open subset $V \subseteq M$ with $\partial M \subseteq V$ to the embedding space $\Emb_{\partial}(V, N, f)$.
In~this way we obtain information about the space $\Emb_{\partial}(M, N, f)$ by studying the embedding functor and its sequence of approximations.

In this paper, we focus on analysing the following embedding functor
\begin{align*}
	 \Emb(\blank) \colon \bOpen{\I}{}^{\op} &\to \CGH \\ 
	 V &\mapsto \Emb_{\partial}(V, \RR^2 \times \udisc^1, c)
\end{align*}
associated to the space $\Kca = \Emb_{\partial}(\I, \RR^2 \times \udisc^1, c)$ of long knots where $c$ is an embedding representing the trivial long knot.  
Our motivation for studying this embedding functor is its close relation to knot theory, in particular, the theory of Vassiliev invariants.
In Section~\ref{sec:manicalknot} we recall some background on manifold calculus and give a precise definition of the space $\Kca$.
At the end of this section, we summarise briefly the original construction of the Vassiliev invariants and present how the theory of these knot invariants and the theory manifold calculus relate. 

As mentioned above, manifold calculus associates to $\Emb(\blank)$ a sequence of polynomial functors $\pT_n \Emb(\blank)$ (Definition \ref{def:polyfun}) approximating the functor $\Emb(\blank)$, 
\[
  \begin{tikzcd}
    &  \Emb(\blank) \arrow[dd, "\eta_n"] \arrow[rdd, "\eta_{n-1}"]  \arrow[rrrdd, "\eta_0"]  \arrow[ldd]&  &  & &\\ \\
      \dots \arrow[r, "r_{n+1}"'] & \pT_n \Emb(\blank) \arrow[r, "r_{n}"'] & \pT_{n-1} \Emb(\blank) \arrow[r, "r_{n-1}"'] & \dots  \arrow[r, "r_1"'] & \pT_0 \Emb(\blank).
  \end{tikzcd}
\]
The induced map $\pi_{0}\left(\eta_n(\I)\right) \colon \pi_0\left(\Emb(\I)\right) \to \pi_{0}\left(\pT_n \Emb(\I)\right)$ is an additive Vassiliev invariant of degree at most $n-1$ and conjecturally it is the universal one, \cf \cite{BCKS17}.
It is known that the map $\pi_0(\eta_{n}(\I))$ is surjective, \cf \cite{Kos20}.

Our approach to understanding the maps $\pi_{0}\left(\eta_n(\I)\right)$ is to compute the Bousfield--Kan homotopy spectral sequence associated to the tower of fibrations
\begin{equation}\label{eq:knotfibration}
   \cdots \xrightarrow{r_{n+1}(\I)} \pT_n \Emb(\I) \xrightarrow{r_{n}(\I)} \pT_{n-1} \Emb(\I) \xrightarrow{r_{n-1}(\I)} \cdots \xrightarrow{r_{1}(\I)} \pT_0 \Emb(\I). 
\end{equation}

As a first step, we study in Section~\ref{sec:cosm} the construction \cite[Contruction 5.1.1]{GKW01} that relates cosimplicial spaces with good functors from $\bOpen{\I}{}$ to the category of spaces.
In Theorem~\ref{thm:goodwilliecos} we give a more precise formulation of this construction, summarised as the theorem below.

\begin{theorem}
  There is an isomorphism between the homotopy class of augmented cosimplicial spaces and the homotopy class of good functors $F \colon \bOpen{\I}{}^{\op} \to \CGH$.
\end{theorem}


We were not able to find a proof of the above theorem or of \cite[Contruction~5.1.1]{GKW01} in the literature. 
Thus, we give a proof in Section~\ref{sec:cosm}, using some results of \cite{AF15}.
Therefore, we can associate a cosimplicial space $\Embcos^{\bullet}$ to the embedding functor $\Emb(\blank)$ via the above equivalence, which provides us with a cosimplicial model for the tower of fibrations~(\ref{eq:knotfibration}), namely
\begin{align*}
  \Tot^{n}\Embcos^{\bullet} &\simeq \pT_n \Emb(\I)\\
  \Tot \Embcos^{\bullet} &\simeq \holim_{n \geq 0} \pT_n \Emb(\I).
\end{align*}
We find the construction of this cosimplicial model very natural, yet it is the least used one in the study of the space of long knots. 
There is another cosimplicial model $C^{\bullet}$ of the tower of fibration~(\ref{eq:knotfibration}) which is defined using the Fulton--MacPherson compactification, \cf \cite{Sin06} and \cite{BCKS17}. 
In Remark~\ref{rmk:othermodel} we briefly recall the construction and properties of~$C^{\bullet}$.

The $n$-th level $\Embcos^{n}$ of the cosimplicial space $\Embcos^{\bullet}$ is weakly homotopy equivalent to the cartesian product of the configuration space $\Conf_n(\RR^2 \times \udisc^1)$ and $n$ copies of the sphere $\usphere^2$.
In Section~\ref{sec:hssknotcal}, we compute the Bousfield--Kan homotopy spectral sequence $\{E_{p,q}^r\}_{p, q \geq 0}$ with integral coefficients associated to $\Embcos^{\bullet}$, using a presentation of the homotopy groups of the configuration spaces via iterated Whitehead products.
More specifically, we are able to give an explicit description of the abelian groups $E_{p, q}^1$ (Proposition~\ref{prop:e1page} and Remark~\ref{rmk:e1page}).
Thanks to this, we obtain a simplification of the calculation of the differentials mapping into the diagonal terms on the $E^1$-page.

\begin{theorem}[Theorem~\ref{prop:d1simplified}]
  For $p \geq 3$,\footnote{For $p = 1$, the differential is null because the domain and the codomain are null. For $p = 2$, the differential is an isomorphism by concrete calculation.} the differential
  $
  d^1 \colon E_{p-1, p}^1 \to E_{p, p}^1
  $
  can be expressed as an explicit sum of four iterated Whitehead products.
\end{theorem}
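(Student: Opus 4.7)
The overall strategy is to compute $d^1$ as the alternating sum of the maps induced on $\pi_p$ by the cofaces of $\Embcos^{\bullet}$, and then to exploit the explicit Whitehead-product presentation of the $E^1$-page (Proposition~\ref{prop:e1page} and Remark~\ref{rmk:e1page}) in order to cut the resulting sum down to four surviving terms. Concretely, for the Bousfield--Kan homotopy spectral sequence of $\Embcos^{\bullet}$ one has
\[
  d^1 \;=\; \sum_{i=0}^{p}(-1)^{i}\, d^{i}_{*},
\]
where $d^i \colon \Embcos^{p-1} \to \Embcos^{p}$ are the cofaces and the normalization involved in defining $E^1_{p,p}$ kills all contributions coming from images of codegeneracies.

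First I would unpack each coface $d^i$ through the weak equivalence $\Embcos^{n} \simeq \Conf_n(\RR^2 \times \udisc^1) \times (\usphere^2)^n$. Geometrically, the inner cofaces $d^1, \ldots, d^{p-1}$ double a configuration point (splitting it into two nearby points) and record the tangent direction via an extra $\usphere^2$-factor, while the boundary cofaces $d^0$ and $d^p$ insert a new configuration point close to one of the endpoints of $\I$ using the trivial knot $c$. This gives an explicit description of each $d^{i}_{*}$ on the generators of $\pi_p$ of the source.

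Next I would plug in the iterated-Whitehead generators of $\pi_p\bigl(\Conf_{p-1}(\RR^2 \times \udisc^1) \times (\usphere^2)^{p-1}\bigr)$ and analyse the image of such a bracket under each $d^{i}_{*}$. Because doubling a configuration point distributes over iterated Whitehead products via a Jacobi-type identity, the outputs are again iterated Whitehead products that live naturally on the target. Many of the resulting terms either vanish for dimension reasons (the hypothesis $p \geq 3$ ensures that stray $\usphere^2$-factors do not contribute extra classes), or are killed after passing to the normalised quotient defining $E^1_{p,p}$.

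The main obstacle is the combinatorial bookkeeping in the last step: one has to keep careful track of signs, of the position at which the doubling occurs inside a given iterated bracket, and of the identifications forced by the codegeneracies, in order to confirm that exactly four iterated Whitehead products survive in the alternating sum and to read off their precise form. Once this reduction is carried out, matching the four surviving terms with the iterated Whitehead products displayed in the theorem is a direct inspection against Proposition~\ref{prop:e1page}.
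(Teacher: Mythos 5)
Your overall skeleton (write $d^1$ as the alternating sum of coface maps, transport through $\Embcos^{n} \simeq \Conf_n(\RR^2 \times \udisc^1) \times (\usphere^2)^n$, evaluate on iterated Whitehead generators, and cancel) matches the paper's, but two essential ingredients are missing, and without them the ``exactly four terms survive'' conclusion cannot be reached. First, the four-term formula is not evaluated on arbitrary basic products: the paper first shows (Proposition~\ref{prop:separateindices}) that the torsion-free part of $E^1_{p-1,p}$ is generated by \emph{separated} products $w=[c_1,c_2]$ in which the doubled generator $x_{k,p-1}$ occurs once in $c_1$ and once in $c_2$, with the remaining indices partitioned between the two factors. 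This reduction is a nontrivial induction using the Jacobi identity and antisymmetry, and the shape of the four surviving brackets ($\partial^k(w)+\partial^{p-1}(w)$, two terms each) is dictated precisely by this separated presentation. You never say which generators you evaluate on, so your bookkeeping step has no starting point. You also do not dispose of the torsion summand $\bigoplus_{T}\pi_p(\usphere^{p-1})$ of $E^1_{p-1,p}$, which the paper kills by noting that $E^1_{p,p}$ is torsion-free.

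Second, the cancellation mechanisms you name are not the ones that operate. Nothing is ``killed after passing to the normalised quotient'': $E^1_{p,p}$ is a subgroup $\bigcap_l \ker(s^l_*)$ of $\pi_p(\Embcos^{p})$, not a quotient, and the image of the alternating sum lands in it automatically. The actual mechanism is a telescoping cancellation: since $\delta^{l-1}_*$, $\delta^{l}_*$ and $\delta^{l+1}_*$ agree on all $x_{i,p-1}$ with $i \notin \{l-1,l,l+1\}$, each of the two terms of $(-1)^{l}\delta^{l}_*(w)$ for $l \neq k, p-1$ cancels against a term of $(-1)^{l\pm 1}\delta^{l\pm 1}_*(w)$, leaving residues only at $l=k$ (because $x_{k,p-1}$ appears twice) and at $l=p-1$ (because every generator carries the index $p-1$). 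The $l=p-1$ residue additionally requires splitting $\bigl[\dots, x_{i,p-1}+x_{i,p},\dots\bigr]$ into a sum of two pure brackets, which uses the configuration-space relations of Proposition~\ref{prop:confrelation} (vanishing of $[x_{ij},x_{kl}]$ for disjoint index pairs), not a ``Jacobi-type identity'' and not a dimension count. As written, your plan would not produce the explicit four-term formula.
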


In Section~\ref{sec:combidi}, we give a combinatorial interpretation of the groups $E_{p, p}^1$ and~$E_{p-1, p}^1$, as well as the differentials $d^1 \colon E_{p, p}^1 \to E_{p-1, p}^1$, based on the methods of \cite{Con08}.
The group~$E_{p, p}^1$ is isomorphic to the abelian group $\Tca_{p-1}$ of labelled unitrivalent trees of degree~$p-1$ with a total ordering on its leaves, modulo AS- and IHX-relations  (Proposition~\ref{prop:eppcomb}).
In Figure~\ref{fig:intro-feynmantree} we draw an example of a labelled unitrivalent tree of degree $4$.

\begin{figure}[!ht]
    \includegraphics[width=\textwidth]{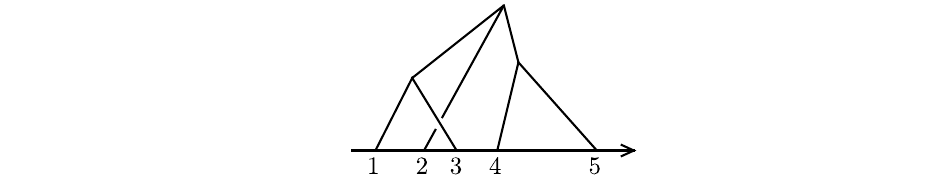}
    \caption{A labelled unitrivalent tree of degree $4$. The arrow is not part of the tree.}
    \label{fig:intro-feynmantree}
\end{figure}

In order to describe the groups $E_{p-1, p}^1$, we introduce the notion of $(i, p)$-marked unitrivalent graphs (Definition~\ref{def:ijmarkedFdiag}).
In Figure~\ref{fig:intro-feynmanloop} we draw an example of a marked unitrivalent graph of degree $8$.
\begin{figure}[!ht]
    \includegraphics[width=\textwidth]{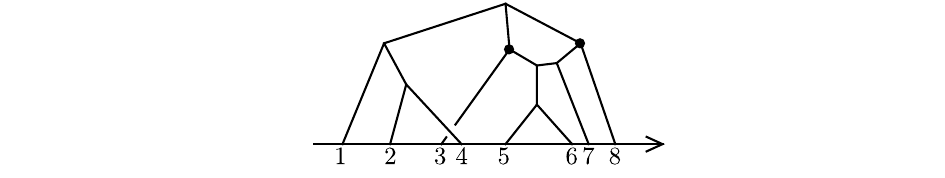}
    \caption{A $(3,8)$-marked unitrivalent graph. The black dots indicate the marked nodes. The arrow is not part of the graph.}
    \label{fig:intro-feynmanloop}
\end{figure}

\begin{proposition}[Proposition~\ref{prop:ep-1pcomb}]
  Let $\Dca_{p-1}$ be the abelian group generated by $(i, p-1)$-marked unitrivalent graphs with $1 \leq i \leq p$, modulo AS- and IHX\textsuperscript{sep}-relations. 
  Then we can identify $\Dca_{p-1}$ with the torsion-free part of $E_{p-1,p}^1$.
\end{proposition}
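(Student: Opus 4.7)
The plan is to parallel the proof of Proposition~\ref{prop:eppcomb}, which identifies $\Tca_{p-1}$ with $E_{p,p}^1$, but now keeping careful track of the extra homotopy degree needed to land in $\pi_p(\Embcos^{p-1})$ rather than $\pi_p(\Embcos^{p})$. The first step is to specialise Proposition~\ref{prop:e1page} at $(p-1, p)$. By the weak equivalence $\Embcos^{p-1} \simeq \Conf_{p-1}(\RR^2 \times \udisc^1) \times (\usphere^2)^{p-1}$ and the splitting of homotopy groups of products, $\pi_p(\Embcos^{p-1})$ decomposes into contributions from $\pi_p(\Conf_{p-1}(\RR^2 \times \udisc^1))$, from $\pi_p$ of the sphere factors, and from mixed iterated Whitehead products between the two sides. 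The torsion-free part is generated by iterated Whitehead brackets of the classes $\iota_i$ coming from the $i$-th $\usphere^2$-factor and the linking-sphere classes $w_{ij} \in \pi_2(\Conf_{p-1}(\RR^2 \times \udisc^1))$, both of degree two; an iterated bracket of $p-1$ such generators lands precisely in $\pi_p$, matching the total degree of an $(i, p-1)$-marked unitrivalent graph as prescribed by Definition~\ref{def:ijmarkedFdiag}.

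The key construction is an assignment sending each $(i, p-1)$-marked unitrivalent graph $G$ to such an iterated Whitehead product. A marked graph decomposes into tree components whose internal structure records bracket nesting, whose leaves carry labels from $\{1, \dots, p-1\}$ selecting which $w_{ij}$-factor is involved at each chord-end, and whose marked nodes indicate where to insert an $\iota_i$-generator coming from the sphere factors. To each tree component one associates an iterated Whitehead bracket following the tree topology, and multiplies these expressions across components to obtain a class in the normalised cosimplicial complex underlying $E_{p-1, p}^1$.

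To conclude, one verifies that the map descends to $\Dca_{p-1}$ and realises the torsion-free part. Antisymmetry of Whitehead brackets at internal edges produces the AS-relation, and the graded Jacobi identity produces the IHX-relation on trees. The \emph{separating} restriction in IHX\textsuperscript{sep} reflects the fact that a non-separating IHX move would mix iterated brackets across independent $\usphere^2$-factors, where Whitehead products vanish, or would create relations already absorbed by normalisation against the cosimplicial codegeneracies of $\Embcos^{\bullet}$. The main obstacle is surjectivity onto the torsion-free part: one must invoke Hilton--Milnor for the sphere factors together with the Fadell--Neuwirth fibration description of $\pi_*(\Conf_{p-1}(\RR^2 \times \udisc^1))$ to show that every non-torsion class is hit by a marked graph, and that the classes not representable by iterated Whitehead brackets -- for instance Hopf-type elements in $\pi_3(\usphere^2)$ and their smash descendants -- contribute only torsion. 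Carrying out this bookkeeping by a degree-by-degree matching of the bracket presentation with the marked-graph combinatorics, in the spirit of \cite{Con08}, is where the weight of the argument lies.
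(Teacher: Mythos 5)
Your overall strategy (parallel Proposition~\ref{prop:eppcomb}, identify the torsion-free generators with iterated Whitehead products, match AS- and IHX\textsuperscript{sep}-relations with antisymmetry and Jacobi) has the right shape, but the core of your construction rests on a misidentification of where the generators of $\quot{E_{p-1,p}^1}{\mathrm{tors}}$ actually live. You propose mixed Whitehead brackets involving classes $\iota_i$ from the $\usphere^2$-factors, inserted at the marked nodes. However, Corollary~\ref{cor: essentialker} shows that the intersection $\bigcap_l \ker s^l_*$ annihilates the entire $(\pi_r(\usphere^2))^{p-1}$ component, so the tangent-sphere factors contribute nothing to $E^1_{p-1,p}$; the whole group sits inside $\pi_p(\Conf_{p-1}(\RR^2\times\udisc^1))$. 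By Proposition~\ref{prop:e1page} the torsion-free part is spanned by basic products of the linking classes $x_{i,p-1}$ of height $p-1$ in which exactly one index $k$ is repeated. The single simple cycle of the marked graph, and its two marked nodes, encode this doubled occurrence of $x_{k,p-1}$: the graph arises by gluing the two trees $\Psi_T(c_1)$ and $\Psi_T(c_2)$ at both the leaf labelled $k$ and the leaf labelled $p-1$ (Construction~\ref{constr:ep-1pcomb}), not by inserting a sphere generator. Your explanation of the IHX\textsuperscript{sep} restriction (vanishing of brackets across independent sphere factors, or absorption by normalisation) is consequently also off target: the restriction is there because the full Jacobi identity is not well defined on the separated products --- applying it at the top-level bracket $[c_1,c_2]$ can destroy the property that the repeated index appears once in each factor, which is precisely why the IHX move is forbidden at edges adjacent to the marked nodes.

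A second, independent gap: even with the correct generators, the argument needs the statement that every torsion-free class is a linear combination of \emph{separated} products $[c_1,c_2]$, with the repeated $x_{k,p-1}$ split between the two factors (Proposition~\ref{prop:separateindices}). This is a genuine inductive argument using the Jacobi identity and cannot be skipped; without it the marked graphs, which by Definition~\ref{def:ijmarkedFdiag} always carry the separated structure, are not known to generate. Two smaller points: marked unitrivalent graphs are connected with exactly one simple cycle, so there are no ``tree components'' to multiply across; and the torsion you set aside is accounted for concretely as the summand $\bigoplus_{T}\pi_p(\usphere^{p-1}) \cong \bigoplus_T \ZZ/2\ZZ$ in Proposition~\ref{prop:e1page}, rather than via an appeal to smash descendants of Hopf elements.
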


The differentials $d^1$ have the following interpretation using unitrivalent graphs.

\begin{theorem}[Theorem~\ref{thm:d1comb}]
  Let $p \geq 4$.
  Under the identifications above, the differential $d^1 \colon E_{p-1, p} \to E_{p, p}$ maps a $(k,p-1)$-marked unitrivalent graph $\Gamma_{k, p-1}$ to the linear combination $\Gamma_{p}^{1} - \Gamma_{p}^{2} - (\Gamma_{k}^{2} - \Gamma_{k}^{1})$ of unitrivalent trees of degree $p-1$, where 
  \begin{enumerate}
    \item the linear combination $\Gamma_{p}^1 - \Gamma_p^2$ is obtained by performing the STU-relation (Definition \ref{def:stu}) on $\Gamma_{k, p-1}$ at the edge connecting the leaf labelled by $p-1$ and the marked node $v_{p-1}$, and
    \item the linear combination $\Gamma_{k}^2 - \Gamma_k^1$ is obtained by performing the STU-relation on $\Gamma_{k, p-1}$ at the edge connecting the leaf labelled by $k$ and the marked node $v_{p-1}$.
  \end{enumerate} 
\end{theorem}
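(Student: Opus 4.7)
The strategy is to combine the explicit Whitehead-product description of $d^1$ from Theorem~\ref{prop:d1simplified} with the combinatorial identifications established for $E_{p,p}^1 \cong \Tca_{p-1}$ (Proposition~\ref{prop:eppcomb}) and for $E_{p-1,p}^1 \cong \Dca_{p-1}$ on torsion-free parts (Proposition~\ref{prop:ep-1pcomb}). The plan is to take a generating $(k,p-1)$-marked unitrivalent graph $\Gamma_{k,p-1}$, unpack its class in $E_{p-1,p}^1$ as an iterated Whitehead bracket living in the relevant homotopy group of $\Conf_n(\RR^2 \times \udisc^1) \times (\usphere^2)^n$, apply the four-term sum of Whitehead products, and then reinterpret the result as a signed sum of labelled unitrivalent trees in $\Tca_{p-1}$.

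More concretely, first I would fix the conventions for the identification $\Dca_{p-1} \hookrightarrow E_{p-1,p}^1$: each marked node $v_{p-1}$ corresponds to a collapsed pair of leaves in the iterated Whitehead bracket representing an element of $\pi_{\ast}$ of the configuration factor, and the edges of $\Gamma_{k,p-1}$ adjacent to leaves $p-1$ and $k$ are precisely the two edges whose Whitehead-bracket factors are affected by the four operations appearing in Theorem~\ref{prop:d1simplified}. I would then show directly, using the Jacobi identity for Whitehead products, that each pair of the four Whitehead products listed in Theorem~\ref{prop:d1simplified} resolves to an STU-difference $\Gamma^1 - \Gamma^2$ applied at one trivalent vertex of the underlying graph. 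The two pairs therefore correspond to the two edges identified in the statement, giving the signed combination $(\Gamma_p^1 - \Gamma_p^2) - (\Gamma_k^2 - \Gamma_k^1)$.

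The translation itself is essentially algebraic bookkeeping: the homotopy classes are built from iterated brackets of generators $\alpha_{ij}$ corresponding to the edges of the graph, and the STU relation is the graphical shadow of the identity $[x,[y,z]] = [[x,y],z] + (-1)^{\cdots}[y,[x,z]]$ once an edge is contracted and the involved leaves are merged at a trivalent vertex. I would verify the case $p=4$ by hand to fix signs and the orientation/AS conventions, and then argue that the same local computation recurs at every trivalent vertex produced by the four Whitehead products, since the remainder of the bracket structure is untouched by the operations in Theorem~\ref{prop:d1simplified}.

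The main obstacle I expect is the careful sign analysis: the AS-relations on $\Tca_{p-1}$, the IHX\textsuperscript{sep}-relations on $\Dca_{p-1}$, the graded-commutativity of Whitehead products on $\usphere^2$, and the sign conventions inherited from the cosimplicial coface maps of $\Embcos^\bullet$ must all be coordinated so that the four Whitehead products assemble into exactly the stated linear combination, rather than a permuted or differently-signed version. A secondary technical point is justifying that one may work entirely on the torsion-free part when computing $d^1$ into $E_{p,p}^1 \cong \Tca_{p-1}$, which I would handle by invoking the explicit description of $E_{p-1,p}^1$ in Proposition~\ref{prop:e1page} and Remark~\ref{rmk:e1page} and noting that the torsion summands map trivially into the diagonal term for degree reasons.
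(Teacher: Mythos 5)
Your overall framework is the same as the paper's: start from the four-term Whitehead-product formula of Theorem~\ref{prop:d1simplified}, push each term through the dictionaries $\Psi_T$ and $\Psi_D$ of Constructions~\ref{constr:eppcomb} and~\ref{constr:ep-1pcomb}, and then control the signs. Your reduction to the torsion-free part is also fine in substance, although the correct justification is simply that any homomorphism from the $2$-torsion summand $\bigoplus_T \pi_p(\usphere^{p-1})$ into the torsion-free group $E_{p,p}^1$ vanishes, rather than a degree argument.

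There is, however, a genuine gap in the mechanism you propose for the central translation step. You claim that each pair of Whitehead products in $\partial^k(w)+\partial^{p-1}(w)$ resolves to an STU-difference \emph{via the Jacobi identity}, and that the STU relation is the graphical shadow of $[x,[y,z]] = [[x,y],z] \pm [y,[x,z]]$. In the dictionary this paper sets up, that identity is precisely the Jacobi identity and its graphical shadow is the IHX relation (this is what the proof of Proposition~\ref{prop:eppcomb} establishes, and likewise the separated Jacobi identity yields IHX\textsuperscript{sep} in Proposition~\ref{prop:ep-1pcomb}); it does not produce STU. In the paper's argument no rebracketing is performed at all: each of the four brackets $[c_1^k,c_2^{k+1}]$, $[c_1^{k+1},c_2^k]$, $[c_1,c_2^p]$, $[c_1^p,c_2]$ already has every $x_{i,p}$ occurring exactly once, so $\Psi_T$ sends each one directly to a single labelled tree with the same bracketing --- hence the same underlying tree shape --- as $c_1$ and $c_2$ (Remark~\ref{rmk:samebrackets}). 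The STU pairing is then a purely graphical observation: the two terms of $\partial^k(w)$ differ only by exchanging the labels $k$ and $k+1$ between the two factors, which is exactly the two resolutions of the marked node $v_k$ of $\Gamma_{k,p-1}$, and the two terms of $\partial^{p-1}(w)$ are the two resolutions of the other marked node, carrying the leaf labelled $p-1$. If you carry out your plan literally, applying Jacobi to extract STU, you will generate IHX-type rewritings instead of the stated four-tree combination. The remaining substance of the proof, which your sketch anticipates but does not execute, is the sign bookkeeping: one must track the prefactors $(-1)^{\#L_1+\#(L_1\times_{>}L_2)}$ built into $\psi_T$ and $\psi_D$, show that the two trees within each pair occur with opposite signs, and show that the two pairs occur with opposite overall signs matching that of $\Gamma_{k,p-1}$; this occupies the bulk of the paper's argument and cannot be dismissed as routine.
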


\begin{figure}
\includegraphics[width=\textwidth]{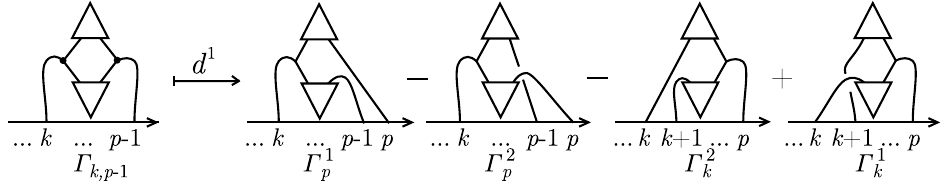}
\caption{An example of $d^1$ applied to a $(k,p-1)$-marked unitrivalent graph. The triangles are placeholders for subgraphs, which stay unmodified.}
\label{fig:intro-d1form}
\end{figure}

See Figure \ref{fig:intro-d1form} for a visualisation of the graphs occurring in the theorem.
We call the equivalence relation on the abelian group $\Tca_{p-1}$ (labelled unitrivalent trees of degree $p-1$) generated by the image of $d^1$ the STU\textsuperscript{2}-equivalence relation (Definition~\ref{def:stu}).
As a corollary we have the following proposition.

\begin{proposition}[Conant]\label{prop:intro-d1comb}
 A tree $\tau \in \Tca_{p-1}$ is STU\textsuperscript{2}-equivalent to 0 if and only if $\tau \in \im(d^1)$ under the isomorphism $E_{p,p}^1 \cong \Tca_{p-1}$ of Proposition~\ref{prop:eppcomb}.
\end{proposition}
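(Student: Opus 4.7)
The plan is to deduce the statement directly from Definition~\ref{def:stu} together with the explicit combinatorial description of $d^1$ furnished by Theorem~\ref{thm:d1comb}. The crucial observation is that, in this paper, the STU\textsuperscript{2} equivalence relation is \emph{defined} as the equivalence relation on the abelian group $\Tca_{p-1}$ generated by $\im(d^1)$. In the abelian-group setting, such a generated relation is most naturally interpreted as the congruence whose equivalence classes are the cosets of the subgroup spanned by the prescribed generators; and since $\im(d^1) \subseteq \Tca_{p-1}$ is the image of a group homomorphism, it is already a subgroup. Hence this congruence has $\im(d^1)$ itself as the class of $0$.

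First I would make this interpretation explicit: any equivalence relation $\sim$ on $\Tca_{p-1}$ that is compatible with addition and declares each $\xi \in \im(d^1)$ equivalent to $0$ must satisfy $\tau \sim \tau'$ whenever $\tau - \tau' \in \im(d^1)$; and this coset relation is itself such an equivalence relation, hence it is the minimal one, which is the STU\textsuperscript{2} relation. This immediately yields
\[
  \tau \sim_{\mathrm{STU}^2} 0 \iff \tau - 0 \in \im(d^1) \iff \tau \in \im(d^1),
\]
establishing both implications at once.

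Before concluding, I would trace through the identifications of Propositions~\ref{prop:eppcomb} and~\ref{prop:ep-1pcomb} to check that Conant's combinatorial ``double STU'' moves from \cite{Con08} correspond term-by-term to $d^1$ applied to classes of $(k,p-1)$-marked unitrivalent graphs, exactly as recorded by Theorem~\ref{thm:d1comb}. The main obstacle, if one can call it that, is precisely this compatibility of identifications: one has to ensure that the subgroup of $\Tca_{p-1}$ generated by Conant's moves coincides with $\im(d^1)$ under the isomorphism $E_{p,p}^1 \cong \Tca_{p-1}$. However, this is exactly the content of Theorem~\ref{thm:d1comb}, so once that theorem is invoked the proposition follows as a formal consequence of the definitions, with no further homological or combinatorial computation needed.
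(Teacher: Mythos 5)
There is a genuine gap here: your argument is circular with respect to the paper's actual definitions. You take as your starting point that the STU\textsuperscript{2} relation is \emph{defined} as the equivalence relation on $\Tca_{p-1}$ generated by $\im(d^1)$, from which the statement would indeed follow formally (the subgroup generated by the image of a homomorphism is the image itself). But Definition~\ref{def:stu}.\rom{2} defines the STU\textsuperscript{2} relation purely combinatorially, with no reference to $d^1$: two differences $\Gamma_1'-\Gamma_1''$ and $\Gamma_2'-\Gamma_2''$ are STU\textsuperscript{2}-related when they arise by performing the STU move at two different leaves of the \emph{same} unitrivalent graph $\Gamma$. The sentence in the introduction describing STU\textsuperscript{2} as ``generated by the image of $d^1$'' is an informal gloss whose justification is precisely the proposition you are asked to prove; reading it back as the definition assumes the conclusion.

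What the paper actually proves (Corollary~\ref{cor:d1imgcomb}) requires two separate inclusions, and your appeal to Theorem~\ref{thm:d1comb} only covers one of them. Theorem~\ref{thm:d1comb} shows that $d^1$ applied to a $(k,p-1)$-marked unitrivalent graph is a specific combination $\Gamma_p^1-\Gamma_p^2-(\Gamma_k^2-\Gamma_k^1)$ of STU resolutions, which gives $\im(d^1)\subseteq\{\tau : \tau\sim_{\mathrm{STU}^2}0\}$. The converse inclusion is not ``exactly the content of Theorem~\ref{thm:d1comb}'': one must additionally argue that \emph{every} STU\textsuperscript{2} relation among trees of degree $p-1$ is realised in this way, \ie that any unitrivalent graph together with two leaves at which the STU moves produce trees is (up to the labelling conventions) a $(k,p-1)$-marked unitrivalent graph with the moves performed at its two marked nodes, and that by Proposition~\ref{prop:ep-1pcomb} such graphs exhaust the generators of the domain $\quot{E_{p-1,p}^1}{\mathrm{tors}}$ of $d^1$. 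This surjectivity-onto-relations step is the nontrivial content of the forward direction and is absent from your proposal.
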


We also obtain a combinatorial interpretation of $E_{p,p}^2$ for $p \geq 1$.

\begin{corollary}[Corollary~\ref{cor:epp2}]
  \begin{enumerate}
    \item $E_{p,p}^2$ is isomorphic to the abelian group generated by unitrivalent tree of degree $p-1$, modulo AS-, IHX-, and STU\textsuperscript{2}-relations, for $p \geq 4$.
    \item $E_{3,3}^2 \cong E_{3,3}^1 \cong \Tca_{2} \cong \ZZ$, for $p = 3$. 
    \item $E_{p,p}^2 = 0$, for $p = 0, 1, 2$.
  \end{enumerate}  
\end{corollary}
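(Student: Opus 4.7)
The plan is to reduce the computation of $E_{p,p}^2$ to the two previous propositions, which respectively identify $E_{p,p}^1$ with $\Tca_{p-1}$ and describe $\im(d^1)$ as the STU\textsuperscript{2}-trivial trees. By definition,
\[
  E_{p,p}^2 \cong \ker\!\bigl(d^1 \colon E_{p,p}^1 \to E_{p+1,p}^1\bigr) \big/ \im\!\bigl(d^1 \colon E_{p-1, p}^1 \to E_{p,p}^1\bigr).
\]
The Bousfield--Kan homotopy spectral sequence is fringed, so $E_{p+1,p}^1$ lies below the diagonal and is trivial; the outgoing $d^1$ therefore contributes nothing and $E_{p,p}^2 \cong E_{p,p}^1 / \im(d^1)$. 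Everything reduces to understanding this quotient.

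For part (i), with $p \geq 4$, I combine Proposition~\ref{prop:eppcomb} ($E_{p,p}^1 \cong \Tca_{p-1}$) with Proposition~\ref{prop:intro-d1comb} (Conant: a tree $\tau \in \Tca_{p-1}$ lies in $\im(d^1)$ iff $\tau$ is STU\textsuperscript{2}-equivalent to zero). Quotienting $\Tca_{p-1}$ (labelled unitrivalent trees modulo AS and IHX) by the STU\textsuperscript{2}-trivial elements gives exactly the group of labelled unitrivalent trees of degree $p-1$ modulo AS, IHX and STU\textsuperscript{2}.

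For part (iii) I treat the low-degree cases directly. For $p = 0$ and $p = 1$, no labelled unitrivalent trees of degree $p - 1 < 1$ exist, so $E_{p,p}^1 = \Tca_{p-1} = 0$ by Proposition~\ref{prop:eppcomb} and therefore $E_{p,p}^2 = 0$. For $p = 2$, the footnote of Theorem~\ref{prop:d1simplified} records that the differential $d^1 \colon E_{1,2}^1 \to E_{2,2}^1$ is an isomorphism by direct calculation, forcing $E_{2,2}^2 = 0$.

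The main obstacle, and the step I expect to be the most delicate, is part (ii): $p = 3$ lies in the range where the Whitehead-product formula of Theorem~\ref{prop:d1simplified} applies, yet it is deliberately excluded from the clean combinatorial description of Theorem~\ref{thm:d1comb} (valid only for $p \geq 4$). To conclude that $E_{3,3}^2 \cong E_{3,3}^1$ one must show that $d^1 \colon E_{2,3}^1 \to E_{3,3}^1$ vanishes. I plan to enumerate the generators of $\Dca_2$, namely the $(i,2)$-marked unitrivalent graphs with $1 \leq i \leq 3$ (these are of very low combinatorial complexity), and verify by hand that for each generator the explicit four-term sum of STU-resolutions collapses to zero after imposing AS and IHX. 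The identification $\Tca_2 \cong \ZZ$, carried by the unique Y-shaped labelled tree with three leaves, then yields $E_{3,3}^2 \cong \ZZ$.
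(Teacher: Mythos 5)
Your reduction to $E_{p,p}^2 \cong E_{p,p}^1/\im(d^1)$ and your treatment of parts (i) and (iii) essentially coincide with the paper's proof, which cites Lemma~\ref{lem:eppalter} together with Corollary~\ref{cor:d1imgcomb} for (i) and Proposition~\ref{prop:d1lowdim} for (iii). Two small remarks there: for $p=0,1$ you invoke Proposition~\ref{prop:eppcomb}, which is only stated for $p\geq 2$; the clean justification is the direct computation in Proposition~\ref{prop:d1lowdim}, namely $E_{1,1}^1\cong \pi_1\bigl(\Conf_1(\RR^2\times\udisc^1)\bigr)\times\pi_1(\usphere^2)=0$ and $E_{0,1}^1\cong\pi_1(\Embcos^0)=0$.

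The genuine gap is in part (ii), and it is not merely a matter of delicacy: the route you propose cannot work. By Proposition~\ref{prop:e1page}, $E_{2,3}^1\cong\bigoplus_T\pi_3(\usphere^2)\oplus\bigoplus_F\pi_3(\usphere^3)$, and for $p=3$ the index set $F$ is empty (the only candidate $[x_{12},x_{12}]$ is not a basic product), so $E_{2,3}^1\cong\pi_3(\usphere^2)\cong\ZZ$ is carried entirely by the $T$-summand. The group $\Dca_{2}$ of marked unitrivalent graphs only models the torsion-free part $\quot{E_{p-1,p}^1}{\mathrm{tors}}=\bigoplus_F\pi_p(\usphere^p)$ (Proposition~\ref{prop:ep-1pcomb}, stated only for $p\geq 4$), which here is zero; enumerating its generators and checking STU-resolutions would therefore prove nothing about the actual differential. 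Moreover, the usual argument for discarding the $T$-summand --- that $d^1$ kills it because $\pi_p(\usphere^{p-1})$ is torsion while $E_{p,p}^1$ is free --- fails precisely at $p=3$, since $\pi_3(\usphere^2)\cong\ZZ$. What is needed, and what the paper does in Proposition~\ref{prop:d1lowdim}, is the direct Whitehead-product computation: $E_{2,3}^1$ is generated by (the class associated to) $x_{12}$, and by Proposition~\ref{prop:compmaps}
\[
  d^1(x_{12})=\sum_{i=0}^{3}(-1)^i\delta^i_{\ast}(x_{12})=x_{23}-(x_{13}+x_{23})+(x_{12}+x_{13})-x_{12}=0,
\]
so $d^1\colon E_{2,3}^1\to E_{3,3}^1$ is trivial and $E_{3,3}^2\cong E_{3,3}^1\cong\Tca_2\cong\ZZ$ by Proposition~\ref{prop:eppcomb}. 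You should replace your combinatorial plan for $p=3$ with this computation (or simply cite Proposition~\ref{prop:d1lowdim}, which already contains it).
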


We conclude in Section~\ref{sec:furtherwork} by illustrating the connection between the manifold calculus tower of $\Emb(\blank)$ and some geometrical and combinatorial aspects of Vassiliev knot invariants, and mention some future work.

\subsection*{Acknowledgements}
Most of the results in this paper are part of my master thesis, written under the supervision of Peter Teichner.
To him belongs my gratitude for his suggestions, guidance and support. 
I want to thank Lukas Brantner, Danica Kosanovic, Achim Krause for the helpful conversations during the thesis project.
I would like to thank Jack Davies, Tobias Dyckerhoff, Gijs Heuts, Geoffroy Horel, Pablo Magni and Mingcong Zeng for their feedback on the current version and for their encouragement.

\counterwithin{equation}{section}

\begin{notation*}
	Throughout the text, we denote by $\I$, $\udisc^n$ and $\usphere^n$ the unit interval, the unit~$n$-disk and the unit $n$-sphere, respectively.
\end{notation*}

\begin{situation*}
	We work with simplicial categories, \ie categories enriched over the category of simplicial sets equipped with the Kan model structure\footnote{See \cite[Appendix A.2.7]{Lur09} for the Kan model structure on the category of simplicial sets.}.
	Most of the categories we consider in this text are by nature enriched over the category of compactly generated weakly Hausdorff spaces.	
	We regard them as simplicial categories by applying the singular complex functor to the mapping spaces.
	For simplicity, we call the mapping simplicial sets obtained as above again \emph{mapping spaces}.
	In particular, every ordinary category is a topologically enriched category with discrete morphism spaces.
	Thus we also consider them as simplicial categories.
	For an introduction on (simplicial) enriched categories and enriched functors see \cite[A.1.4, A.3]{Lur09} and \cite[Chapter 9]{Hir03}.
	
	We denote by $\holim$ the \emph{homotpy limits} in simplicial categories.
	We refer the reader to \cite[Chapter 18]{Hir03} for a detailed explanation of homotopy limits.
	For the calculation of homotopy limits in the category of compactly generated weak Hausdorff spaces see \cite[Chapter 18.2]{Hir03}, \cite[Chapter 6]{Rie14} and \cite[Section 8]{MV15}. 
\end{situation*}

\section{Background and motivation}\label{sec:manicalknot}


\subsection{Manifold Calculus}\label{sec:manical}
\counterwithin{equation}{subsection}

In this section we are going to briefly introduce the basic building blocks of manifold calculus.
The main references for this section are \cite{BW13}, \cite{Wei99} and \cite{GW99}, which also contain further motivation for this subject.

\begin{definition}\label{def:cats}
  \begin{enumerate}
    \item Denote by $\CGH$ the simplical category of compactly generated weak Hausdorff spaces.
    	We will call $\CGH$ the \emph{category of spaces} for simplicity.
    \item For a manifold $M$, denote by $\bOpen{M}{}$ the category of open subsets of $M$ which contain $\partial M$.
      Morphisms of $\bOpen{M}{}$ are inclusions of these open subsets.
      For a manifold $M'$ without boundary, we will simplify the notation as $\Open{M'}{}$.
  \end{enumerate} 
\end{definition}

\begin{definition}\label{def:isoequi}
  A smooth codimension zero embedding $i_v \colon (V, \partial V) \to (W, \partial W)$ between smooth manifolds $V$ and $W$ is an \emph{isotopy equivalence} if there exists a smooth embedding $i_w \colon (W, \partial W) \to (V, \partial V)$ such that $i_v \circ i_w$ and $i_w \circ i_v$ are isotopic to $\id_{(W, \partial W)}$ and $\id_{(V, \partial V)}$ respectively.
\end{definition}

\begin{definition}\label{def:goofun}
  Let $M$ be a smooth manifold of dimension $m$. 
  A \emph{good functor} on $\bOpen{M}{}$ is a functor $F \colon \bOpen{M}{}^{\op} \to \CGH$ of simplicial categories, which satisfies the following conditions:
  \begin{enumerate}
    \item (isotopy invariant) If $i \in \Mor_{\bOpen{M}{}}(V, W)$ is an isotopy equivalence, then $F(i)$ is a weak homotopy equivalence;
    \item For any filtration $\dots V_{i} \subseteq V_{i+1} \dots $ of open subsets of $M$, the  canonical map
      \[
        F(\cup_{i \in \NN}V_i) \to \holim_{i \in \NN} F(V_i)
      \]
      is a weak homotopy equivalence.
  \end{enumerate}
\end{definition}

\begin{definition}\label{not:manifold}
  Let $M$ and $N$ be smooth manifolds.
  \begin{enumerate}
    \item We define $\Emb(M,N)$ to be the space of smooth embeddings of $M$ into $N$.
    \item When $M$ and $N$ are smooth manifolds with boundary, we define $\Emb_{\partial}(M, N)$ to be the space of smooth embeddings $F \colon M \to N$ which preserve the boundary, \ie $F(\partial M) \subseteq \partial N$.
    \item We define  $\Emb_{\partial}(M,N,f)$ to be the space of smooth embeddings that coincide with a given smooth embedding $f \colon M \hookrightarrow N$ near the boundary that is transverse to $\partial N$, \ie
    \[
      \Emb_{\partial}(M,N,f) \coloneqq \{F \in \Emb_{\partial}(M,N) \mid F \pitchfork \partial N, F \text{ and } f \text{ are germ equivalent at } \partial M\}.
    \]
  \end{enumerate}
  We topologise $\Emb(M,N)$ and $\Emb_{\partial}(M, N, f)$ with the compact-open topology.
\end{definition}


\begin{theorem}[Weiss]\label{thm:embgood}
    Let $M$ and $N$ be smooth manifolds with $\dim M \leq \dim N$, and let $f \colon M \hookrightarrow N$ be a fixed smooth embedding with $f \pitchfork \partial N$.
      Then the embedding functor 
      \begin{align*}
        \Emb_{\partial}(\blank, N, f) \colon \bOpen{M}{}^{\op} &\to \CGH \\
         V &\mapsto \Emb_{\partial}(V, N, f)
      \end{align*}
      is a good functor.
\end{theorem}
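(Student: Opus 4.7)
The plan is to verify the two conditions of a good functor (Definition \ref{def:goofun}) directly, relying on two classical results from differential topology: the isotopy extension theorem (in the parametrised form) and the fact that restriction maps between embedding spaces, for reasonable inclusions, are Serre fibrations.

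First I would verify isotopy invariance. Suppose $i \colon V \hookrightarrow W$ is an isotopy equivalence in $\bOpen{M}{}$, with isotopy inverse $j \colon W \hookrightarrow V$ and isotopies $H_t, K_t$ from $j \circ i$ to $\id_V$ and from $i \circ j$ to $\id_W$, all fixing the appropriate collars of $\partial M$. Restriction gives maps $i^{*} \colon \Emb_{\partial}(W, N, f) \to \Emb_{\partial}(V, N, f)$ and $j^{*}$ in the other direction. The composite $j^{*} \circ i^{*}$ sends $F \mapsto F \circ j \circ i$, and composing with the isotopy $H_t$ (using continuity of composition in the strong Whitney $C^{\infty}$-topology, see \cite[Chapter II.3]{GG80}) gives a homotopy to the identity. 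Symmetrically for the other composite. Hence $i^{*}$ is a homotopy equivalence, in particular a weak equivalence.

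Next I would treat the filtration condition for an ascending chain $V_0 \subseteq V_1 \subseteq \cdots$ with union $V = \bigcup_i V_i$. I would factor the canonical comparison as
\[
\Emb_{\partial}(V, N, f) \xrightarrow{\;\alpha\;} \lim_{i} \Emb_{\partial}(V_i, N, f) \xrightarrow{\;\beta\;} \holim_{i} \Emb_{\partial}(V_i, N, f)
\]
and show each piece is a weak equivalence. For $\alpha$: a smooth embedding of $V$ into $N$ is determined by, and gives rise to, a compatible family of smooth embeddings of each $V_i$; continuity of this bijection in the strong Whitney $C^{\infty}$-topology follows because the topology on $C^{\infty}(V, N)$ is the inverse limit of the topologies on $C^{\infty}(V_i, N)$ (since any jet-neighbourhood of a map on $V$ is detected by finitely many compact sets, each contained in some $V_i$). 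Thus $\alpha$ is in fact a homeomorphism.

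For $\beta$ to be a weak equivalence it suffices to show that every restriction map $r_i \colon \Emb_{\partial}(V_{i+1}, N, f) \to \Emb_{\partial}(V_i, N, f)$ is a Serre fibration, so that the tower is Reedy fibrant and its strict limit computes the homotopy limit; see \cite[Chapter 18.2]{Hir03}. This fibration property is the parametrised isotopy extension theorem of Palais/Cerf: given a continuous family of embeddings $V_i \hookrightarrow N$ extending a fixed embedding $V_{i+1} \hookrightarrow N$ at time zero, one extends the whole family to a continuous family of embeddings of $V_{i+1}$ (using that $V_i \subseteq V_{i+1}$ is an open submanifold and ambient isotopies compactly supported away from $\partial M$ exist). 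The boundary condition and transversality to $\partial N$ are preserved throughout because the extension is chosen to be supported in the interior. This is the step that requires the most technical care, since one must argue compatibility with the strong Whitney topology and with the germ condition along $\partial M$.

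The main obstacle is the second part: the Serre fibration property of the restriction maps. Once that is in place, combining $\alpha$ (a homeomorphism) with $\beta$ (a weak equivalence from $\lim$ to $\holim$ in a tower of fibrations) yields condition (ii), and together with the isotopy-invariance argument above this completes the proof that $\Emb_{\partial}(\blank, N, f)$ is a good functor.
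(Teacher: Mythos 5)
The paper does not actually prove this statement: its ``proof'' is the citation to \cite[Proposition~1.4]{Wei99}, so the only meaningful comparison is with Weiss's argument, whose overall shape (verify the two conditions of Definition~\ref{def:goofun}; for the second, relate the strict limit to the homotopy limit of the restriction tower via the isotopy extension theorem) your outline correctly reproduces. Your isotopy-invariance step is essentially the standard one, modulo the caveat that restriction and precomposition along non-proper inclusions of open subsets are not obviously continuous for the \emph{strong} Whitney topology (a basic strong-open set is cut out by an infinite locally finite family of compacta, which need not remain locally finite in the larger manifold); this is usually resolved by working with the weak $C^{\infty}$ topology or with singular simplices of smooth families.

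There are, however, two genuine gaps in your treatment of condition~(ii). First, $\alpha$ is not a homeomorphism; it is not even surjective. A compatible family of embeddings of the $V_i$ assembles only to an injective immersion of $V=\bigcup_i V_i$, and an injective immersion that restricts to an embedding on each $V_i$ need not be a topological embedding of $V$: the standard injective immersion of $\RR$ whose image accumulates on itself restricts to an embedding on every $(-i,i)$. (The claim that the strong topology on $C^{\infty}(V,N)$ is the inverse limit of those on the $C^{\infty}(V_i,N)$ also fails, for the same ``infinite locally finite family'' reason; it holds for the weak topology.) Second, the assertion that each restriction $r_i\colon \Emb_{\partial}(V_{i+1},N,f)\to\Emb_{\partial}(V_i,N,f)$ is a Serre fibration does not follow from parametrised isotopy extension for arbitrary open $V_i\subseteq V_{i+1}$: isotopy extension produces an ambient isotopy agreeing with the prescribed one only near a \emph{compact} set, so it yields fibrations onto spaces of embeddings (or germs of embeddings) of neighbourhoods of compacta, not onto $\Emb_{\partial}(V_i,N,f)$ itself. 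The standard repair is to use condition~(i), which you have already established, to replace the given filtration by a cofinal one in which each stage is a relatively compact codimension-zero submanifold of the next, and then to interleave the tower with germ spaces to which isotopy extension does apply. These two points are precisely where the content of \cite[Proposition~1.4]{Wei99} lies, so as written your proposal records the correct strategy but not a complete proof.
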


\begin{proof}
  See \cite[Proposition~1.4]{Wei99}.
\end{proof}

Now we are going to introduce the approximation sequence for good functors produced by manifold calculus.

\begin{definition}\label{def:posetk}
  Denote by $[n]$ the set $\{0, 1, \dots, n\}$.
  \begin{enumerate}
    \item Define the category $\CatPow{[n]}$ as the category whose objects are the subsets of $[n]$ and the morphisms are inclusions of subsets.
    \item Define the full subcategory $\CatPow{[n]}_{\neq\emptyset}$ of $\CatPow{[n]}$, whose objects are the non-empty subsets of $[n]$.
  \end{enumerate}
\end{definition}

\begin{definition}\label{def:polyfun}
  For a manifold $M$ without boundary, a good functor $F$ is a \emph{polynomial functor of degree at most $n$} if for every open subset $U \in \Open{M}{}$ and $A_0, A_1, \dots, A_n$ pairwise disjoint closed subsets of $M$ which lie in $U$, the $(n+1)$-cube
  \begin{align*}
    \chi \colon \CatPow{[n]} &\to \CGH \\
      S &\mapsto F(U \setminus \cup_{i \in S} A_i)
  \end{align*}
  is homotopy cartesian, \ie the natural map $\chi(\emptyset) \to \holim_{S \neq \emptyset}\chi(S)$ is a weak homotopy equivalence. 
  In other words, $F(U) \to \holim_{S \neq \emptyset}F(U \setminus \cup_{i \in S}A_i)$ is a weak homotopy equivalence.
\end{definition}
  
\begin{remark}\label{rmk:bpolyfun}
  One obtains the definition of polynomial functors of degree at most $n$ for manifolds $M$ with boundary by replacing $\Open{M}{}$ with $\bOpen{M}{}$ and requiring that each $A_i$ has empty intersection with $\partial U$ so that $F(U \setminus \cup_{i \in S}A_i)$ is well-defined.
\end{remark}

The name `polynomial functor' may come from the following criterium for polynomial functions.

\begin{lemma}\label{lem:charpoly}
  A smooth function $p \colon \RR \to \RR$ satisfies 
  \begin{equation}\label{eq:poly}
    \sum_{S \subseteq [n]}(-1)^{\#{S}} p \left( \sum_{i \in S}x_i \right) = 0
  \end{equation} 
  for any collection of real numbers $x_0, x_1, \dots, x_n$ if and only if $p$ is a polynomial of degree at most $n$. \qed
\end{lemma}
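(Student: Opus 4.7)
The plan is to prove the two implications separately: I would use induction on $n$ for the ``only if'' (polynomial $\Rightarrow$ identity) direction, and multivariable differentiation for the ``if'' direction. Both strategies exploit the fact that the alternating sum in \eqref{eq:poly} is an iterated finite difference of $p$.

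For the ``only if'' direction, the base case $n = 0$ reduces to $p(0) - p(x_0) \equiv 0$, which is equivalent to $p$ being constant, that is, of degree at most $0$. For the inductive step I would split the sum according to whether $n \in S$, rewriting the left-hand side of \eqref{eq:poly} as
\[
  \sum_{S \subseteq [n-1]} (-1)^{|S|} \left[ p\left(\textstyle\sum_{i \in S} x_i\right) - p\left(x_n + \textstyle\sum_{i \in S} x_i\right) \right].
\]
If $p$ is a polynomial of degree at most $n$, then for each fixed $x_n$ the function $q(y) \coloneqq p(y) - p(y + x_n)$ is a polynomial in $y$ of degree at most $n-1$, and the expression above is precisely the analogous alternating sum, with parameter $n-1$, applied to $q$. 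The inductive hypothesis then closes the argument.

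For the converse, I would differentiate \eqref{eq:poly} once with respect to each of $x_0, x_1, \ldots, x_n$. At every step, the chain rule kills those summands indexed by subsets $S$ not containing the variable being differentiated, so that after all $n+1$ differentiations the only surviving subset is $S = [n]$ itself. The identity then collapses to
\[
  (-1)^{n+1} p^{(n+1)}(x_0 + x_1 + \cdots + x_n) = 0 \qquad \text{for all } x_0, \ldots, x_n \in \RR.
\]
Specialising, for instance, $x_1 = \cdots = x_n = 0$ yields $p^{(n+1)} \equiv 0$, and since $p$ is smooth this forces $p$ to be a polynomial of degree at most $n$.

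I do not foresee any substantive obstacle; this is a classical statement about iterated finite differences of smooth functions. The only mild care required is in tracking signs and index sets through the induction step and through the repeated differentiation, both of which are bookkeeping rather than conceptual difficulties.
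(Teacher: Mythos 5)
Your argument is correct, but there is nothing in the paper to compare it against: Lemma~\ref{lem:charpoly} is stated with a \verb|\qed| and no proof, being offered only as a classical fact motivating the terminology ``polynomial functor.'' Your two halves both check out. For the forward direction, splitting the sum over $S \subseteq [n]$ according to whether $n \in S$ does reduce \eqref{eq:poly} to the same identity with parameter $n-1$ applied to $q(y) = p(y) - p(y+x_n)$, whose degree drops by one since the leading terms cancel; the base case $p(0) - p(x_0) \equiv 0$ is handled correctly (note $[n] = \{0,\dots,n\}$ has $n+1$ elements, so degree $\leq n$ is indeed the right threshold). For the converse, differentiating the finite sum once in each of $x_0,\dots,x_n$ leaves only the $S = [n]$ term, giving $p^{(n+1)} \equiv 0$ on all of $\RR$, and smoothness then forces $p$ to be a polynomial of degree at most $n$ by Taylor's theorem. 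The only bookkeeping worth double-checking is that the surviving sign is $(-1)^{\#[n]} = (-1)^{n+1}$, which you have.
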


\begin{definition}\label{def:okcat}
   Let $M$ be a smooth manifold and let $n \in \NN$. 
   Define $\bOpen{M}{n}$ to be the full subcategory of $\bOpen{M}{}$ whose objects are the open subsets $W$ of $M$ that are diffeomorphic to $\mathrm{N}(\partial M) \sqcup (\bigsqcup_{k} \RR^{m})$ with $0 \leq k \leq n$.
   Here $\mathrm{N}(\partial M)$ denotes a (non-fixed) tubular neighbourhood of $\partial M$ in $M$.      
\end{definition}

\begin{definition}\label{def:Tk}
  Let $M$ be a manifold and let $F$ be a good functor. 
  The \emph{$n$-th Taylor approximation} $\pT_n F$ of $F$ is the homotopy right Kan extension
  \[
    \begin{tikzcd}[row sep = huge]
      \bOpen{M}{n}^{\op} \arrow[r, hook, "i_n"] \arrow[d, "F"', ""{name=A, right}] & \bOpen{M}{}^{\op} \arrow[ld, "\pT_n F", ""'{name=B, left}, dashed] \arrow[Leftarrow, from=A, to=B, shorten >=2ex, shorten <=2ex] \\
       \CGH
    \end{tikzcd}
  \]
  of $\restr{F}{\bOpen{M}{n}^{\op}}$ along the inclusion $i_n \colon \bOpen{M}{n}^{\op} \hookrightarrow \bOpen{M}{}^{\op}$, together with the natural transformation $\eta_n \colon F \to \pT_n F$ coming from the universal property of the homotopy right Kan extension.  
  Written as a homotopy limit, the functor $\pT_n F$ is 
  \[
    \pT_n F(V) \simeq \holim_{\mathclap{\substack{W \subseteq V \\ W \in \bOpen{M}{n}}}} F(W).
  \]
\end{definition}

\begin{example}[{\cite[Section~0]{Wei99}}]\label{ex:t1imm}
 Let $M, N$ be smooth manifolds (without boundary).
 The first Taylor approximation $\pT_1 \Emb(V)$ of the embedding functor $\Emb(\blank, N)$ is weakly homotopy equivalent to the immersion functor $\operatorname{Imm}(\blank,N)$, which associates to an open subset $V \subseteq M$ the space of immersions $\operatorname{Imm}(V,N)$.
 In other words, the ``linear'' approximation of $\Emb(M, N)$ is the space $\operatorname{Imm}(M, N)$ of ``local'' embeddings.
\end{example}

\begin{proposition}[Weiss]\label{prop:polyfun}
  Using the notation from Definition~\ref{def:Tk}, the functor $\pT_nF$ toghether with the natural transformation $\eta_n$ has the following properties.
  \begin{enumerate}
    \item The functor $\pT_n F$ is a polynomial functor of degree at most $n$,
    \item For any $V \in \bOpen{M}{n}$, the map $\eta_n(V)$ is a weak homotopy equivalence.
    \item If $F$ is a polynomial functor of degree at most $n$, then $\eta_n$ is a weak equivalence, \ie $\eta_n(V)$ is a weak homotopy equivalence for every $V \in \bOpen{M}{}$.
    \item If $\mu \colon F \to G$ is a natural transformation where $G$ is a polynomial functor of degree at most $n$, then the natural transformation $\mu$ factors through $\pT_n F$, unique up to contractible choices.
  \end{enumerate}
\end{proposition}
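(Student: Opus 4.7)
The plan is to establish the four claims in the order (ii), (i), (iii), (iv), with each step relying on the previous ones together with the defining property of $\pT_n F$ as a homotopy right Kan extension. Starting with (ii): when $V \in \bOpen{M}{n}$, the open set $V$ itself is a terminal object in the indexing category $\{W \in \bOpen{M}{n} : W \subseteq V\}$ of the homotopy limit defining $\pT_n F(V)$. Since a homotopy limit over a category with a terminal object is weakly equivalent to the value of the diagram at that object, $\eta_n(V) \colon F(V) \to \pT_n F(V)$ is a weak equivalence.

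For (i), given pairwise disjoint closed subsets $A_0, \ldots, A_n \subseteq U$ (each disjoint from $\partial U$ in the manifold-with-boundary case), I would show the $(n+1)$-cube $S \mapsto \pT_n F(U \setminus \cup_{i \in S} A_i)$ is homotopy cartesian by interchanging two homotopy limits. Writing each vertex as a homotopy right Kan extension and swapping the order produces a homotopy limit indexed by pairs $(W, S)$ with $W \in \bOpen{M}{n}$, $W \subseteq U$, and $\emptyset \neq S \subseteq [n] \setminus S_W$, where $S_W \coloneqq \{i \in [n] : A_i \cap W \neq \emptyset\}$. After passing to a cofinal subcategory of $W$'s satisfying $S_W \subsetneq [n]$, which uses crucially that $W$ has at most $n$ disk-components while the $A_i$ form $n+1$ pairwise disjoint closed sets, the inner homotopy limit of the constant diagram $F(W)$ over the non-empty subsets of the non-empty set $[n] \setminus S_W$ has contractible indexing nerve and collapses to $F(W)$. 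Collecting the outer values then recovers $\pT_n F(U)$. I expect this cofinality step to be the main technical obstacle.

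For (iii), assume $F$ is polynomial of degree at most $n$ and induct on the number $k$ of disk-components of $V \cong \mathrm{N}(\partial M) \sqcup \bigsqcup_k \RR^m$, with base case $k \leq n$ handled by (ii). For $k > n$, select $n+1$ of the disk-components and shrink each slightly to obtain pairwise disjoint closed subsets $A_0, \ldots, A_n \subseteq V$; the polynomial hypothesis on $F$ then expresses $F(V)$ as $\holim_{\emptyset \neq S} F(V \setminus \cup_{i \in S} A_i)$, and each $V \setminus \cup_{i \in S} A_i$ has strictly fewer disk-components, so by induction $\eta_n$ is a weak equivalence on them. Combined with (i) applied to $\pT_n F$, this yields $\eta_n(V)$ as a weak equivalence. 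For $V$ of infinite disk-complexity, exhaust by an increasing union of opens of finite complexity and invoke condition (ii) of the good-functor axioms.

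Finally, (iv) follows formally from (iii): naturality of the homotopy right Kan extension produces a commutative square with horizontal maps $\eta_n^F, \eta_n^G$ and vertical maps $\mu, \pT_n \mu$; by (iii) applied to $G$, the map $\eta_n^G$ is a weak equivalence, so inverting it up to weak equivalence yields the factorisation $\mu \simeq (\eta_n^G)^{-1} \circ \pT_n \mu \circ \eta_n^F$ through $\pT_n F$.
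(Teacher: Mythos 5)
The paper offers no proof of this proposition beyond the citation to \cite[Theorems~3.9 and 6.1]{Wei99}, so your proposal has to stand on its own as a reconstruction of Weiss's arguments. Parts (ii) and (iv) are correct and standard. In part (i), however, the justification you give for the crucial cofinality step does not work: it is \emph{not} true that a $W \in \bOpen{M}{n}$ with $W \subseteq U$ must satisfy $S_W \subsetneq [n]$, because a single disk-component of $W$ (say a large ball) can meet all $n+1$ of the pairwise disjoint closed sets $A_i$; the pigeonhole on ``at most $n$ components versus $n+1$ sets'' only becomes available after one has first replaced the indexing poset, cofinally, by opens whose components are each small enough to meet at most one $A_i$. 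Proving that this shrinking does not change the homotopy limit is the actual content of Weiss's Theorem~6.1 (via his Theorem~5.2) and uses the isotopy-invariance of $F$ essentially; your outline asserts the conclusion of that step rather than supplying an argument for it.

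Part (iii) has a more serious gap: the induction on disk-components does not close up. If $A_i$ is a closed subset of the $i$-th disk-component $D_i$, then $D_i \setminus A_i$ is an annular region diffeomorphic to $\usphere^{m-1} \times \RR$ (two intervals when $m=1$), so $V \setminus \bigcup_{i \in S} A_i$ is not an object of any $\bOpen{M}{k}$ and does not have ``strictly fewer disk-components''; the vertices of your cube are therefore not covered by the inductive hypothesis. Moreover the claim is for \emph{every} $V \in \bOpen{M}{}$, and a general open subset (for instance $V=M$ itself, or a neighbourhood of an embedded circle) is neither a disjoint union of disks nor an increasing union of finite disjoint unions of disks, so your exhaustion step never reaches it. The correct argument (Weiss, Theorem~5.1) exhausts $V$ by interiors of compact handlebodies and runs a double induction on handle decompositions, using the degree-$\leq n$ hypothesis to cut top-index handles into pieces of lower complexity; this is the genuinely non-formal part of the proposition and is exactly what your sketch replaces with an induction that does not run.
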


\begin{proof}
  See \cite[Theorem~3.9, Theorem~6.1]{Wei99}.
\end{proof}

\begin{remark}\label{rmk:uniqpolyfun}
  In other words, the natural transformation $\eta_n \colon F \to \pT_n F$ is the best approximation of $F$ by a polynomial functors of degree at most $n$ and $\pT_n F$ is unique up to weak homotopy equivalence.
\end{remark}

\begin{definition}\label{def:taylortower}
  Let $M$ be a smooth manifold and let $F \colon \bOpen{M}{} \to \CGH$ be a good functor.
  The \emph{Taylor tower} of $F$ is the tower of natural transformations $r_{i} \colon \pT_i F \to \pT_{i-1} F$ with $i \geq 1$, that are induced by the inclusion $\bOpen{M}{i-1} \hookrightarrow \bOpen{M}{i}$. 
  \[
    \begin{tikzcd}[row sep = huge]
      &  & F \arrow[d, "\eta_n"] \arrow[rd, "\eta_{n-1}"] \arrow[ld, "\eta_{n+1}"]  \arrow[rrrd, "\eta_0"] \arrow[lld] &  &  &   \\
      \dots \arrow[r, "r_{n+2}"'] & \pT_{n+1}F \arrow[r, "r_{n+1}"'] & \pT_n F \arrow[r, "r_{n}"'] & \pT_{n-1} F \arrow[r, "r_{n-1}"'] & \dots \arrow[r, "r_1"'] & \pT_0 F. 
    \end{tikzcd}
  \]
\end{definition}

\begin{definition}
  In the situation of Definition~\ref{def:taylortower},
  The Taylor tower of $F$ \emph{converges} if for every $V \in \bOpen{M}{}$, the canonical map $ \eta(V) \colon F(V) \to \holim_{n \neq 0} \pT_n F(V)$ is a weak homotopy equivalence.
\end{definition}

The Taylor tower of a good functor does not converge in general. 
However, for some embedding functors, we have the following convergence criterium.

\begin{theorem}[Goodwillie--Weiss]\label{thm:convg}
  Let $M$ and $N$ be two smooth manifolds. 
  Then the Taylor tower of the embedding functor $\Emb(\blank,N)$ converges, if $\dim N - \dim M \geq 3$.
\end{theorem}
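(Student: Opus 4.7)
The plan is to deduce convergence from a connectivity estimate on the layers of the Taylor tower. Let $L_n(V)$ denote the homotopy fiber of $r_n(V) \colon \pT_n \Emb(V, N) \to \pT_{n-1} \Emb(V, N)$. A standard argument with the Milnor $\lim^1$ sequence shows that if, for each fixed $V \in \bOpen{M}{}$, the connectivity of $L_n(V)$ tends to $\infty$ as $n \to \infty$, then the canonical comparison map $\Emb(V, N) \to \holim_{n} \pT_n \Emb(V, N)$ is a weak homotopy equivalence; so the whole problem reduces to bounding this connectivity.

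As a first step I would set up the identification of $L_n(V)$ with a space of (compactly supported) sections of a fiber bundle over the unordered configuration space $\Conf_n(V)/\Sigma_n$. The fiber over a configuration $\{x_1, \ldots, x_n\}$ is the total homotopy fiber of the punctured $n$-cube obtained by applying $\Emb(\blank, N)$ to small punctured neighborhoods of the $x_i$. This identification is a direct consequence of Definition~\ref{def:Tk}, which presents $\pT_n \Emb$ as a homotopy right Kan extension along $\bOpen{M}{n} \hookrightarrow \bOpen{M}{}$, together with a cofinality argument replacing the indexing poset by configurations. A small secondary reduction, using that $\Emb(\blank, N)$ and each $\pT_n \Emb(\blank, N)$ are good functors and so satisfy the descent property built into Definition~\ref{def:goofun}, lets me assume without loss of generality that $V$ is a disjoint union of finitely many open disks together with a tubular neighborhood of $\partial M$, which keeps the handle dimension of $\Conf_n(V)$ under control.

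The main obstacle, and the core technical input, is the connectivity of the total fibers just described. This is the content of the multiple disjunction theorem of Goodwillie--Klein: the total fiber of the punctured cube at an $n$-configuration is roughly $\bigl(n(\dim N - \dim M - 2) + c\bigr)$-connected, with $c$ depending only on $\dim M$. The proof proceeds by a general-position / transversality induction, inductively producing isotopies that realize generic $k$-fold intersections for $k \leq n$; each intersection forced by the $n$-cube costs $\dim N - \dim M$ in connectivity, while the coherence requirements cost $2$ per point, yielding the exponent $\dim N - \dim M - 2$.

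Finally, combining the bounded handle dimension of $\Conf_n(V)/\Sigma_n$ (which grows linearly in $n$ but is dominated by the fiber connectivity when $\dim N - \dim M \geq 3$) with the fiber connectivity estimate via the obstruction-theoretic connectivity bound for section spaces, one gets a bound of the form $\operatorname{conn}\bigl(L_n(V)\bigr) \geq n(\dim N - \dim M - 3) - c'$ for some $c' = c'(V)$. Under the hypothesis $\dim N - \dim M \geq 3$ this tends to $\infty$, and the $\lim^1$ argument above finishes the proof. The sole genuinely hard step is the multiple disjunction estimate; everything else is formal manipulation of the homotopy right Kan extension defining $\pT_n$.
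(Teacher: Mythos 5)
The paper does not actually prove this statement; the proof is a citation to \cite[Corollary~2.5]{GW99}, which asserts that the comparison map $\eta_n \colon \Emb(M,N) \to \pT_n\Emb(M,N)$ is roughly $\bigl(n(q-2)+1-\dim M\bigr)$-connected for $q = \dim N - \dim M$. Your sketch correctly identifies the genuinely hard input (the Goodwillie--Klein multiple disjunction theorem) and the correct picture of the layers as section spaces over unordered configuration spaces, but it contains a real logical gap at the very first step. You reduce convergence to showing that the connectivity of the layers $L_n(V) = \operatorname{hofib}\bigl(\pT_n\Emb(V,N) \to \pT_{n-1}\Emb(V,N)\bigr)$ tends to infinity. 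That is not sufficient: high connectivity of the layers only shows that the tower $\{\pT_n\Emb(V,N)\}_n$ \emph{stabilizes}, i.e.\ that $\holim_n \pT_n\Emb(V,N) \to \pT_m\Emb(V,N)$ is highly connected for large $m$. It says nothing about the map \emph{from} $\Emb(V,N)$ into the tower; a constant tower with contractible layers is the obvious counterexample to your reduction. What one must estimate is the connectivity of $\eta_n$ itself, and this is exactly what multiple disjunction delivers: it shows that the cubes $S \mapsto \Emb(U \setminus \bigcup_{i\in S}A_i, N)$ are highly cartesian (``analyticity'' of the embedding functor), and the general analyticity-implies-convergence machinery then converts this into a connectivity bound on $F(V) \to \pT_nF(V)$, not merely on the layers. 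So the disjunction theorem is being invoked in the wrong place in your argument.

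A secondary problem: even granting your reduction, your final estimate $\operatorname{conn}\bigl(L_n(V)\bigr) \geq n(\dim N - \dim M - 3) - c'$ does not tend to infinity in the boundary case $\dim N - \dim M = 3$ of the hypothesis, so it would only prove the theorem for codimension at least $4$. The correct growth rate per stage is $q-2$, not $q-3$; the apparent extra loss from the handle dimension of $\Conf_n(V)/\Sigma_n$ is absorbed because the disjunction estimates are stated relative to handle decompositions rather than applied naively to a section space over the full configuration space. I would recommend restructuring the argument around the connectivity of $\eta_n$ as in \cite{GW99} and being careful with the bookkeeping of the handle-dimension correction.
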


\begin{proof}
  See \cite[Corollary 2.5]{GW99}.
\end{proof}

Thus, in the codimension $2$ case, where interesting knot theory is developed, this convergence theorem is not applicable. 
Nevertheless, manifold calculus allows us to study the space of knots from a homotopy theoretic viewpoint, as shown in the later sections.
For this purpose, let us first recall the basic notions of knot theory.

\subsection{Long knots and Vassiliev's knot invariants}
Classically, knot theory studies smooth embeddings of $\usphere^1$ into $\usphere^3$ up to isotopy.
A long knot is an embedding from $\I$ to $\RR^2 \times \udisc^1$ coinciding with a fixed linear embedding near the boundary.
We consider long knots instead of knots in this paper because of technical convenience.
For example, the space $\Kca$ of long knots has an $\E_1$-algebra\footnote{We denote by $\E_1$ the little interval operad, which is a concrete model for $A_{\infty}$-operad in $\CGH$.} structure induced by concatenation, \cf \cite[Section 4]{BCKS17} and \cite{Bud08}.
The one point compactification of each long knot induces an isomorphism between $\pi_0(\Kca)$ and $\pi_0(\Emb(\usphere^1, \usphere^3))$.
Thus for the study of knot invariants with values in an abelian group $A$, \ie elements of $\Ho^{0}(\Emb(\usphere^1, \usphere^3); A)$, it does no harm to use long knots instead of knots. 
However, note that $\Emb(\usphere^1, \usphere^3)$ and the space of long knots $\Kca$ have different higher homotopy groups, \cf \cite[Theorem 2.1]{Bud08}.

\begin{definition}\label{def:lknot}
  Fix the embedding $c \colon \I \to \RR^2 \times \udisc^1$, $t \mapsto (0,0, -1 + 2t)$ and define the space $\Kca$ of long knots as $\Emb_{\partial}(\I, \RR^2 \times \udisc^1, c)$.
  Elements of $\Kca$ are called \emph{long knots}.
\end{definition}

For the joy of the reader, see Figure~\ref{fig:butterfly} for an example of a long knot.
\begin{figure}[!ht]
    \includegraphics[width=\textwidth]{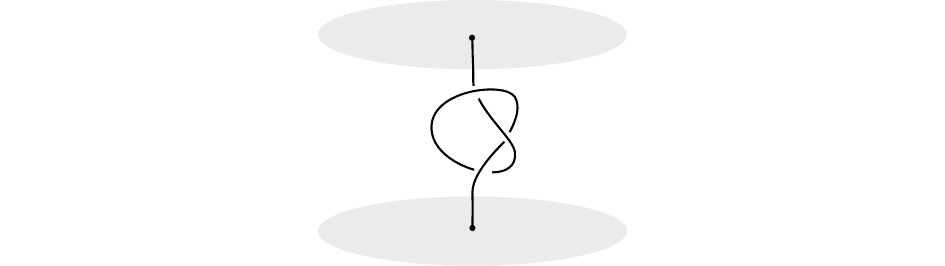}
    \caption{An example of a long knot.}
    \label{fig:butterfly}
\end{figure}

\begin{definition}\label{def:isotopyemb}
  Two long knots $K_0,K_1 \in \Kca$ are called \emph{isotopic}\footnote{In the smooth settings, isotopy and ambient isotopy are equivalent.} if there is a smooth map $F \colon \I \times \I \to \RR^2 \times \udisc^1$ such that $\restr{F}{\I \times \{0\}} = K_0$ and $\restr{F}{\I \times \{1\}} = K_1$, and $\restr{F}{\I \times \{t\}} \in \Kca$ for every $t \in \I$. 
  We call $F$ an isotopy between $K_0$ and $K_1$, and write $K_0 \sim K_1$
\end{definition}

\begin{definition}\label{def:knotinvariant}
  A \emph{knot invariant} with values in a set $R$ is a map $f\colon \pi_0(\Kca) \to R$.
\end{definition}

Classifying knots up to isotopy has always been a central problem in knot theory, so finding computable knot invariants plays an important role.
The knot invariants that we are interested in are the so called finite type invariants, which are a collection of knot invariants discovered by Vassiliev, see \cite{Vas90} for Vassiliev's original approach and \cite{Bar95} for an alternative explanation.
There is a precise definition of Vassiliev invariants using combinatorics, which links the study of Vassiliev invariants to Chern--Simon theory and algebraic structures of Feynman diagrams, \cf \cite{Bar95} and \cite{Kon94}.
For motivation, we sketch Vassiliev's original approach here: 

Instead of focusing on one specific knot invariant, Vassiliev considered the whole set\footnote{Vassiliev used another mapping space instead of $\Kca$, which is homotopy equivalent to $\Kca$. For simplicity, we just write $\Kca$ instead.} $\Ho^0(\Kca; A)$ of all knot invariants with values in a given abelian group $A$.
The main steps of his computation are the following:
\begin{enumerate}
  \item Embed $\Kca$ in the space $\cinfty_{\partial}(\I, \RR^2 \times \udisc^1, c)$ of all smooth maps from $\I$ into $\RR^2 \times \udisc^1$ which are germ equivalent with $c$ on the boundary (Definition~\ref{def:lknot}).
  \item Compute the homology of the complement of $\Kca$ in $\cinfty_{\partial}(\I, \RR^2 \times \udisc^1, c)$. 
  \item Use Alexander duality to obtain $\Ho^{\bullet}(\Kca; A)$, and in particular $\Ho^{0}(\Kca; A)$.
\end{enumerate}

In order to perform step \rom{2} and \rom{3}, Vassiliev finds a filtration by finite dimensional vector spaces $\{\Gamma_i\}_{i \in \NN}$, which approximate the space $\cinfty_{\partial}(\I, \RR^2 \times \udisc^1, c)$.
Intersecting this sequence with $\cinfty_{\partial}(\I, \RR^2 \times \udisc^1, c) \setminus \Kca$ yields a filtration 
\[
\sigma_1 \subseteq \sigma_2 \subseteq \dots \subseteq \sigma_n \subseteq \sigma_{n+1} \subseteq \dots \subseteq \cinfty_{\partial}(\I, \RR^2 \times \udisc^1, c) \setminus \Kca.
\]
Now, Vassiliev computes $\Ho_{\bullet}(\cinfty_{\partial}(\I, \RR^2 \times \udisc^1, c) \setminus \Kca; A)$ via a homological spectral sequence associated to this filtration.
Furthermore, this filtration gives a filtration of the homology groups $\Ho_{\bullet}(\cinfty_{\partial}(\I, \RR^2 \times \udisc^1, c) \setminus \Kca; A)$.
In each of the finite dimensional vector spaces we can apply Alexander duality to obtain a filtration 
\[
  V_1^A \subseteq V_2^A \subseteq \dots \subseteq V_n^A \subseteq V_{n+1}^A \subseteq \dots \subseteq \Ho^{0}(\Kca; A).
\]
Finally, a \emph{Vassiliev invariant of degree at most $n$} with values in $A$ is defined to be an element of~$V_n^A$.

\begin{remark}
  Let $K \in \cinfty_{\partial}(\I, \RR^2 \times \udisc^1, c)$ be a smooth map. 
  We call a point $p \in \im(K)$ a singularity of $K$ if $K^{-1}(p)$ contains more than one element.
  The filtration $(\sigma_i)_{i \geq 1}$ of $\cinfty_{\partial}(\I, \RR^2 \times \udisc^1, c) \setminus \Kca$ arises 
  by distinguishing $K$ by the type and the number of its singularities.
  Thus it appears natural to conjecture that the system of Vassiliev invariants classifies knots.
  On the other hand, it is still open whether Vassiliev invariants detect the unknot.
\end{remark}

\begin{notation}\label{not:embfun}
   We abbreviate the embedding functor $\Emb_{\partial}(\blank, \RR^2 \times \udisc^1, c)$ associated to the space $\Kca$ as $\Emb(\blank)$, since this is the only embedding space we are going to consider in the rest of the text.
\end{notation}

The convergence for the Taylor tower of the embedding functor $\Emb(\blank)$ corresponding to the space $\Kca$ indeed fails, because the set $\pi_0(\Kca)$ is countable, but the homotopy limit of the corresponding Taylor tower can be shown (formally) to be uncountable.
However, the natural transformations $\eta_n$ in the Taylor tower of $\Emb(\blank)$
\begin{equation}\label{eq:mctower}
  \begin{tikzcd}
    &  \Emb(\blank) \arrow[dd, "\eta_n"] \arrow[rdd, "\eta_{n-1}"]  \arrow[rrrdd, "\eta_0"]  \arrow[ldd]&  &  & &\\ \\
      \dots \arrow[r, "r_{n+1}"'] & \pT_n \Emb(\blank) \arrow[r, "r_{n}"'] & \pT_{n-1} \Emb(\blank) \arrow[r, "r_{n-1}"'] & \dots  \arrow[r, "r_1"'] & \pT_0 \Emb(\blank)
  \end{tikzcd}
\end{equation}
do provide us with a sequence of knot invariants, \ie $\pi_{0}\left(\eta_n(\I)\right) \colon \pi_0\left(\Kca\right) \to \pi_0\left(\pT_n \Emb(\I)\right)$ for every $n \in \NN$.
These are actually finite type invariants:

\begin{theorem}[Budney--Connant--Koytcheff--Sinha]\label{thm:BCKS}
  Let $n \in \NN$.
  The map $\pi_0(\eta(\I))$ is an additive finite type knot invariant of degree at most $n-1$.
\end{theorem}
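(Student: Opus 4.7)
My plan is to verify the two parts of the theorem separately: that $v_n := \pi_0(\eta_n(\I))$ is a knot invariant of finite type of degree at most $n-1$, and that it is additive with respect to connected sum.

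For the degree bound, I first extend $v_n$ to immersed long knots with transverse double points via the Vassiliev skein relation $v_n(K_\times) = v_n(K_+) - v_n(K_-)$, reducing the claim to the assertion that
\[
  \sum_{\epsilon \in \{0,1\}^n} (-1)^{|\epsilon|}\, \bigl[\pT_n \Emb(K_\epsilon)\bigr] = 0 \quad\text{in}\quad \pi_0(\pT_n \Emb(\I))
\]
for any singular long knot $K_\times$ with $n$ transverse double points, where $K_\epsilon$ runs over its $2^n$ resolutions. I would then use the description $\pT_n \Emb(\I) \simeq \holim_{W \in \bOpen{\I}{n}} \Emb(W)$ (Definition~\ref{def:Tk}) together with the polynomial property of $\pT_n$ (Proposition~\ref{prop:polyfun}) to identify the alternating sum above with the total homotopy-difference of a certain $n$-cubical diagram of embedding spaces. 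A geometric analysis of this cubical diagram—choosing small pairwise disjoint closed arcs near the preimages of the double points to express the desingularisation cube in terms of restrictions to the complements of these arcs—shows that it factors through an essentially $(n-1)$-dimensional subdiagram, so that its total differential vanishes by the polynomial property. Via the cosimplicial model of Section~\ref{sec:cosm} and the identification $\Tot^n \Embcos^\bullet \simeq \pT_n \Emb(\I)$, this parallels the central argument of \cite{BCKS17}, carried out there on the Sinha cosimplicial model $C^\bullet$.

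For additivity, I would use that concatenation of long knots equips $\Kca$ with the structure of an $\E_1$-algebra and that, by the naturality of Goodwillie's construction, each $\pT_n \Emb(\I)$ inherits such a structure with $\eta_n(\I)$ a map of $\E_1$-algebras. This forces $v_n$ to be a monoid homomorphism on $\pi_0$; combining with the fact that $\pi_0(\pT_n \Emb(\I))$ is actually an abelian group (the identity being represented by the trivial knot and invertibility coming from the target being group-like up to homotopy), one deduces the additivity $v_n(K_1 \# K_2) = v_n(K_1) + v_n(K_2)$.

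The main obstacle is the cubical-vanishing step in the first part: setting up the restriction cube so that it is genuinely $(n-1)$-degenerate requires carefully choosing the arcs around the preimages and tracking how the polynomial property interacts with the pairing of preimages into double points. A single connected component of a $W \in \bOpen{\I}{n}$ can contain both preimages of a double point, so naive pigeonhole on components does not suffice; the correct argument exploits the fact that the restriction cube arises from a smaller "linking" cube through the codimension-one substrata of the discriminant, a combinatorial subtlety that is most transparent in the cosimplicial reformulation.
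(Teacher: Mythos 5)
The paper does not actually prove this statement: it is quoted from the literature and the ``proof'' is a citation to \cite[Theorem 6.5]{BCKS17}, so the only fair comparison is with the argument given there. Your outline follows the right general strategy, but two of its load-bearing steps are not established. First, for additivity you assert that each $\pT_n\Emb(\I)$ inherits an $\E_1$-structure ``by the naturality of Goodwillie's construction'', with $\eta_n(\I)$ a map of $\E_1$-algebras. This does not follow from naturality: concatenation of long knots does not interact in any evident way with the homotopy right Kan extension defining $\pT_n$, and producing a compatible multiplication on $\pT_n\Emb(\I)$ is precisely the non-trivial point singled out in Remark~\ref{rmk:monoid-BCKS}. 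In \cite{BCKS17} it is obtained from the operad action on the Fulton--MacPherson cosimplicial model $C^{\bullet}$ (their Sections 4 and 5), not from the Taylor tower directly; indeed Remark~\ref{rmk:othermodel} records that it is still open whether the Goodwillie model $\Embcos^{\bullet}$ used in this paper carries such a structure. The further claim that $\pi_0(\pT_n\Emb(\I))$ is an abelian group (needed for ``additive'' in the sense of Remark~\ref{rmk:monoid-BCKS}) is likewise a theorem of \cite{BCKS17}, not a formal consequence of group-likeness.

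Second, in the degree-bound part you correctly identify the cubical-vanishing step as the crux, but you do not resolve it: the difficulty you name --- that both preimages of a double point can lie in the same component of some $W\in\bOpen{\I}{n}$, so the resolution cube is not obviously $(n-1)$-degenerate --- is exactly where the shift from degree $n$ to degree $n-1$ must be extracted, and ``the correct argument exploits \dots a combinatorial subtlety'' is a placeholder rather than an argument. As it stands the proposal is a roadmap to \cite{BCKS17} rather than a proof; to make it self-contained you would need either to import their punctured-knot/evaluation-map analysis or to supply the degenerate-cube argument explicitly.
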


\begin{proof}
  See \cite[Theorem 6.5]{BCKS17}.
\end{proof}

\begin{remark}\label{rmk:monoid-BCKS}
  Let $A$ be an abelian group.
  An \emph{additive} knot invariant is a abelian monoid homomorphism $\pi_0(\Kca) \to A$, where the abelian\footnote{The connected sum of knots is up to isotopy abelian because one can shrink one knot in the connected sum and slide it through the other knot in the connected sum.} monoid structure of $\pi_0(\Kca)$ is induced by connected sum of knots.
  Thus, one non-trivial point in Theorem~\ref{thm:BCKS} is to give $\pi_0\left(\pT_n \Emb(\I)\right)$ an abelian monoid structure such that it is compatible with the connected sum of knots. 
  The authors of \cite{BCKS17} solve this by defining compatible $\E_1$-algebra structures on the spaces $\Kca$ and $\pT_n \Emb(\I)$, \cf \cite[Section 4]{BCKS17}.
  For more survey on the operadic structures on the space of long knots (not necessarily in codimension $2$), \cf \cite{Bud08}, \cite{Sin06} and \cite{DH12}.
\end{remark}

\begin{conjecture}[Budney--Connant--Koycheff--Sinha]\label{conj:BCKS}
  Let $n \in \NN$.
  The map $\pi_0(\eta_{n}(\I))$ is the universal additive finite type knot invariant of degree at most $n-1$.
\end{conjecture}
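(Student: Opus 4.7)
The plan is to leverage the cosimplicial model $\Embcos^{\bullet}$ and the Bousfield--Kan spectral sequence developed in this paper to identify $\pi_{0}(\pT_n \Emb(\I))$ with the universal receptacle for additive Vassiliev invariants of degree at most $n-1$. By Theorem~\ref{thm:BCKS} the map $\pi_0(\eta_n(\I))$ is already such an invariant; the content of the conjecture is that every other additive finite type invariant of degree at most $n-1$ factors uniquely through it. Since Kosanović's result, \cf \cite{Kos20}, establishes surjectivity of $\pi_0(\eta_n(\I))$, uniqueness of any such factorisation is automatic, so the actual task is to construct, for each abelian group $A$ and each additive Vassiliev invariant $v \colon \pi_0(\Kca) \to A$ of degree at most $n-1$, a monoid homomorphism $\phi_v \colon \pi_0(\pT_n \Emb(\I)) \to A$ satisfying $v = \phi_v \circ \pi_0(\eta_n(\I))$.

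To construct $\phi_v$, I would filter $\pi_{0}(\pT_n \Emb(\I))$ by the kernels of the composites of the induced maps from the Taylor tower, so that the associated graded maps into the diagonal subquotients $E^{\infty}_{p,p}$ of the BKSS for $p \leq n$. By Corollary~\ref{cor:epp2}, together with vanishing of all higher differentials landing on the diagonal, these subquotients should be quotients of $\Tca_{p-1}/\mathrm{STU}^{2}$, which, up to the standard passage between Jacobi diagrams and their primitive-generated subalgebra, coincides with Bar-Natan's presentation, \cf \cite{Bar95}, of the $(p-1)$-st graded piece of the Vassiliev filtration on primitive invariants. The weight system of $v$ therefore determines a homomorphism out of each $E^{\infty}_{p,p}$, and assembling these across $p \leq n$ via the filtration would produce $\phi_v$; compatibility with the connected sum is guaranteed by the $\E_1$-algebra structures on the Taylor tower, \cf Remark~\ref{rmk:monoid-BCKS}.

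The main obstacle I expect is the passage from rational to integral coefficients. Rationally, formality of the little $2$-disks operad implies that the analogous spectral sequence for Sinha's model $C^{\bullet}$ collapses at $E^{2}$; combined with the equivalence between $\Embcos^{\bullet}$ and $C^{\bullet}$ derivable from Theorem~\ref{thm:goodwilliecos} and Kontsevich's universality theorem, \cf \cite{Kon94}, this would settle the rational form of the conjecture. Integrally, however, one must rule out higher differentials $d^{r}$ for $r \geq 2$ entering the diagonal and control torsion obstructions to lifting weight systems with values in a general abelian group $A$ to honest invariants. Handling these differentials with the combinatorial tools developed here appears to require an extension of the $(i, n)$-marked unitrivalent graph framework to the off-diagonal terms $E^{1}_{p, q}$ with $q > p$, and this is where I expect the real difficulty to lie.
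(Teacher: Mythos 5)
This statement is Conjecture~\ref{conj:BCKS}: the paper records it as an open conjecture of Budney--Conant--Koytcheff--Sinha and offers no proof, so there is no argument of the paper's to compare yours against. What the paper actually establishes is strictly weaker: that $\pi_0(\eta_n(\I))$ \emph{is} an additive finite type invariant of degree at most $n-1$ (Theorem~\ref{thm:BCKS}, quoted from \cite{BCKS17}), that it is surjective (\cite{Kos20}), and the combinatorial identification of $E^1_{p,p}$, $E^1_{p-1,p}$ and $d^1$ culminating in Corollary~\ref{cor:epp2}. The concluding diagram of Section~\ref{sec:furtherwork} makes explicit that universality is equivalent to $\bar{\eta}_n(\I)$ being an isomorphism, and the paper treats this as open.

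Your proposal is a strategy outline rather than a proof, and the gaps sit exactly where you say you ``expect the real difficulty to lie''; unfortunately those are the steps that carry all the content. First, identifying the diagonal subquotients of $\pi_0(\pT_n \Emb(\I))$ with $\Tca_{p-1}$ modulo AS-, IHX- and STU\textsuperscript{2}-relations requires not only Corollary~\ref{cor:epp2} (which computes $E^2_{p,p}$) but also the vanishing of all higher differentials $d^r$, $r \geq 2$, into and out of the diagonal with integral coefficients; neither the paper nor the literature it cites establishes this, and the paper's own remark after Corollary~\ref{cor:epp2} (quoting Bott) flags the passage from $E^2_{n,n}$ to $E^{\infty}_{n,n}$ as the unresolved step. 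Second, the claim that ``the weight system of $v$ determines a homomorphism out of each $E^{\infty}_{p,p}$'' presupposes an integral integration theory for weight systems valued in an arbitrary abelian group $A$; the Kontsevich-integral argument you invoke via \cite{Kon94} is rational, and integrally this is precisely the open part of the theory of Vassiliev invariants. Third, even granting the associated graded identifications, assembling the maps on graded pieces into a single monoid homomorphism $\phi_v$ on $\pi_0(\pT_n \Emb(\I))$ requires solving the extension problems in the filtration, which the proposal does not address. So the proposal correctly locates the obstacles but overcomes none of them; as written it does not prove the statement, and no proof is currently known.
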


We refer the readers to \cite{Hab00}, \cite{CT04}, \cite{CT04b} and \cite{Sta00} for various geometric and combinatorial descriptions of universal additive Vassiliev invariants.
If the conjecture is true, then we expect $\pi_0\left(\pT_n \Emb(\I)\right)$ to be isomorphic to the abelian groups generated by certain combinatorial diagrams.
The rest of the text aims at explaining one method of understanding the homotopy groups (at least $\pi_0$) of $\pT_n \Emb(\I)$, namely, the computation of a Bousfield--Kan homotopy spectral sequence of the following tower of fibration:
\[
\cdots \xrightarrow{r_{n+1}(\I)} \pT_n \Emb(\I) \xrightarrow{r_{n}(\I)} \pT_{n-1} \Emb(\I) \xrightarrow{r_{n-1}(\I)} \cdots \xrightarrow{r_{1}(\I)} \pT_0 \Emb(\I).
\]
In order to do this, we introduce in the next section a cosimplicial space associated to the embedding functor $\Emb(\blank)$.


\section{A cosimplicial model constructed by Goodwillie}\label{sec:cosm}
\counterwithin{equation}{section}

Goodwillie observed that a good functor on $\bOpen{\I}{}$ corresponds to an augmented cosimplicial space, and vice versa, \cf \cite[Section~5]{GKW01}.
The cosimplicial spaces arise naturally in this way enjoy nice properties and can be considered as cosimplicial models for the Taylor tower of the corresponding good functors (Definition \ref{def:cosmodel}). 
Because certain details are left out in the paper \cite{GKW01} for the construction of the cosimplicial spaces and their properties, we reformulate this correspondence in terms of equivalence of simplicial functor categories and give proofs in full detail, using some results from \cite{AF15}.
This equivalence further facilitates the computations in Section~\ref{sec:hssknotcal}.

\begin{definition}\label{def:catsind}
  We define the following two categories.
  \begin{enumerate}
    \item The \emph{simplex category} $\CatsInd$ consists of the objects $[n] = \{0,1,\dots, n\} \subseteq \NN$ with $n \geq 0$, and the morphisms are the order-preserving maps.
    \item Denote by $\CatsInd_{+}$ the category of finite, totally ordered sets.
      The morphisms are order-preserving maps.
  \end{enumerate}  
\end{definition}

\begin{remark}
  The simplex category $\CatsInd$ is equivalent to the category of non-empty finite totally ordered sets, which we also denote, by abuse of notation, by $\CatsInd$.
\end{remark}

\begin{definition}\label{def:cosspace}
  \begin{enumerate}
    \item A \emph{cosimplicial space} is a functor $X^{\bullet} \colon \CatsInd \to \CGH$.
    \item An \emph{augmented cosimplicial space} $Y^{\bullet}_{+}$ is a functor $Y^{\bullet}_{+} \colon \CatsInd_{+} \to \CGH$.
  \end{enumerate}
\end{definition}

\begin{notation}
  Let $X^{\bullet}$ be a cosimplicial space.
  Denote by $X^{k}$ the value $X^{\bullet}([k])$, for $k \geq 0$.
\end{notation}

\begin{notation}
  By restricting an augmented cosimplicial space $Y^{\bullet}_{+}$ to the subcategory $\CatsInd$, we obtain the \emph{associated cosimplicial space}, which we denote as $Y^{\bullet}$.
\end{notation}

\begin{convention}
  By the \emph{totalisation} $\Tot X^{\bullet}$ of a cosimplicial space $X^{\bullet}$ we always mean the totalisation $\Tot \widetilde{X^{\bullet}}$ of a fibrant replacement $\widetilde{X^{\bullet}}$ of $X^{\bullet}$, with respect to the model structure introduced in \cite[Section X.4.6]{BK72}.
  Similarly, by the \emph{$n$-th partial totalisation} $\Tot^{n} X^{\bullet}$ of $X^{\bullet}$ we always mean $\Tot^{n} \widetilde{X^{\bullet}}$.
\end{convention}

\begin{remark}[{\cite[Chapter XI.4.4]{BK72}}]
  We have natural weak homotopy equivalences
  \begin{align*}
    \Tot X^{\bullet} &\simeq \holim \widetilde{X^{\bullet}} \\
    \Tot^{n} X^{\bullet} &\simeq \holim_{k \leq n} \widetilde{X^{k}}
  \end{align*}
\end{remark}

\begin{definition}\label{def:finOpen}
  Let $\fbOpen{\I}$ be the full subcategory of $\bOpen{\I}{}$ whose objects are the open subsets of $\I$ that contain $\partial \I$ and have only finitely many path connected components, \ie
  \[
    \Ob\left(\fbOpen{\I}\right) \coloneqq \{\I\} \cup \bigcup_{n \geq 0} \Ob\left(\bOpen{\I}{n}\right)
  \]
\end{definition}

\begin{proposition}\label{prop:finOpen}
  A good functor on $\bOpen{\I}{}^{\op}$ is determined up to weak homotopy equivalence by its restriction on $\fbOpen{\I}^{\op}$.
\end{proposition}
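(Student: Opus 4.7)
The plan is to show that for any $V \in \bOpen{\I}{}$, the value $F(V)$ can be reconstructed, up to weak homotopy equivalence, from the restriction of $F$ to $\fbOpen{\I}^{\op}$. Since morphisms in $\bOpen{\I}{}$ are inclusions, this will suffice: naturality is automatic from the construction below being functorial in $V$. The key input is the second axiom of a good functor (Definition~\ref{def:goofun}), which computes $F$ on filtered unions as a homotopy limit.

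First I would handle the easy cases. If $V$ has only finitely many connected components, then $V \in \fbOpen{\I}$ by definition, so there is nothing to do. If $V = \I$, again $V \in \fbOpen{\I}$. So we may assume $V \neq \I$ and $V$ has infinitely many components. Since $V$ is open in $\I \subseteq \RR$, each connected component is an open interval and contains a rational, so the set of components is at most countable. At most two of these components meet $\partial \I$; call their union $V_0$. Enumerate the remaining components as $W_1, W_2, \dots$, and set
\[
  V_n \coloneqq V_0 \cup W_1 \cup \dots \cup W_n.
\]
Then each $V_n$ has finitely many components and contains $\partial \I$, so $V_n \in \fbOpen{\I}$; the inclusions $V_n \hookrightarrow V_{n+1}$ are morphisms in $\fbOpen{\I}$; and $V = \bigcup_{n} V_n$.

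Next I would invoke the second condition in Definition~\ref{def:goofun} to obtain the weak homotopy equivalence
\[
  F(V) \isoarr \holim_{n \in \NN} F(V_n).
\]
The right hand side depends only on $\restr{F}{\fbOpen{\I}^{\op}}$, which gives the claim pointwise. For the asserted naturality up to weak homotopy equivalence, one can either pick the filtrations coherently along a given inclusion $V \subseteq V'$ (extending a filtration of $V$ to one of $V'$) or observe that any two such filtrations of the same $V$ are cofinal in the category of finite-component open subsets of $V$ containing $\partial \I$, so that the homotopy limit is independent of choices.

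I do not expect any serious obstacle: the content is essentially a cofinality argument combined with the sheaf-like axiom for good functors. The only point requiring a little care is ensuring that the filtrations $(V_n)$ can be chosen compatibly with morphisms, which is straightforward because connected components of a subset of $V$ refine to components of the ambient open set.
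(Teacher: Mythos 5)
Your proposal is correct and is exactly the argument the paper intends: the paper's proof consists of the single line that the claim follows from Definition~\ref{def:goofun}.ii), i.e.\ from writing an arbitrary $V \in \bOpen{\I}{}$ as a filtered union of finite-component opens in $\fbOpen{\I}$ and using $F(\cup_n V_n) \isoarr \holim_n F(V_n)$. You have simply spelled out the details (countability of the components, the exhaustion $V_n$, and the naturality/cofinality point) that the paper leaves implicit.
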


\begin{proof}
  This follows by Definition~\ref{def:goofun}.\rom{2} of good functor.
\end{proof}

\begin{remark}\label{rmk:finOpeniso}
  The restriction of a good functor on $\bOpen{\I}{}^{\op}$ to $\fbOpen{\I}^{\op}$ is isotopy invariant in the sense of Definition~\ref{def:goofun}.i).
  An isotopy invariant functor on $\fbOpen{\I}^{\op}$ fullfills Definition~\ref{def:goofun}.ii) automatically.
\end{remark}

In the following, we reformulate and prove the correspondence described in \cite[Construction 5.1.1]{GKW01} between augmented cosimplicial spaces and good functors from $\fbOpen{\I}^{\op}$ to the category of spaces.
We need first few notation from the theory of simplicially enriched categories~\cite[A.3]{Lur09}.

\begin{situation}
  Denote by $\Category{Cat}_{\ssimplex}$ the category of simplicially enriched categories. 
  In other words, objects of $\Category{Cat}_{\ssimplex}$ are simplicially enriched categories and morphisms are simplicial functors. 
  A functor $F \colon \Cca \to \Dca$ in $\Category{Cat}_{\ssimplex}$ is a \emph{weak equivalence }if 
  \begin{enumerate}
    \item for every object $Y \in \Dca$, there exists $X \in \Cca$ such that $F(X) = Y$ in the homotopy category $\ho\Dca$ of $\Dca$, and 
    \item $F$ induces an equivalence $\Map_{\Cca}(X, X') \xrightarrow{\simeq} \Map_{\Dca}(F(X), F(X'))$ of mapping spaces for every pair $(X, X')$ of object in $\Cca$.
  \end{enumerate}
  In fact, the category $\Category{Cat}_{\ssimplex}$ admits a left proper combinatorial model structure with this notion of weak equivalences \cite[Proposition~A.3.2.4]{Lur09}.
  Therefore, we can form the homotopy category $\ho\Category{Cat}_{\ssimplex}$ of $\Category{Cat}_{\ssimplex}$. 
\end{situation}

\begin{theorem}\label{thm:goodwilliecos}
  Let $\kappa \colon \fbOpen{\I}^{\op} \to \CatsInd_{+}$ be the functor $\kappa(V) \coloneqq \pi_0(\I \setminus V)$.
  Then $\kappa$ induces a bijection 
  \[
  \Hom_{\ho\Category{Cat}_{\ssimplex}}\left(\CatsInd_{+}, \CGH\right) \xrightarrow{\kappa \circ \blank} \Hom_{\ho\Category{Cat}_{\ssimplex}}^{\mathrm{g}}\left(\fbOpen{\I}^{\op}, \CGH\right)
  \]
  where $\Hom_{\ho\Category{Cat}_{\ssimplex}}^{\mathrm{g}}\left(\fbOpen{\I}^{\op}, \CGH\right)$ is the subset of good functors. 
\end{theorem}

\begin{remark}
  A more elegant way to formulate this is to use the language of $\infty$-categories: the functor $\kappa$ induces an equivalence 
  \[
  \infFun\left(\Nv\left(\CatsInd_{+}\right), \infHType \right) \xrightarrow{\kappa \circ \blank} \infFun^{g}\left(\Nv\left(\fbOpen{\I}^{\op}\right), \infHType\right)
  \]
  of $\infty$-category of functors. 
  Here, $ \infFun^{g}$ denotes the $\infty$-category of good functors, $\Nv$ denotes the nerve functor, and $\infHType$ denotes the $\infty$-category of homotopy types which can be obtained from $\CGH$ by inverting all weak homotopy equivalences~\cite[Remark~1.2.16.3]{Lur09} . 
  In particular, here we have an ``enriched''-version of the statement of the above theorem, instead of only talking in the homotopy category $\ho\Category{Cat}_{\ssimplex}$.
  We will see later in the proof that Theorem~\ref{thm:goodwilliecos} is a consequence of the universal property of simplicial localisation, which can be in general only formulated in $\ho\Category{Cat}_{\ssimplex}$ because the construction itself involves various cofibrant and fibrant replacements.
  However, we choose not to go further into the $\infty$-categorical setting, since it is not necessary for our later applications.
\end{remark}

\begin{corollary}[{\cite[Remark 5.1.3]{GKW01}}]\label{cor:goodwilliecosprop}
	Let $F \colon \fbOpen{\I}^{\op} \to \CGH $ be a good functor.
	Denote by $\Ffr^{\bullet}_{+}$ an augmented cosimplicial space such that $\kappa \circ \Ffr^{\bullet}_{+} \simeq F$.
	Then the associated cosimplicial space $\Ffr^{\bullet}$ has the following properties:
	\begin{enumerate}
	\item For $V \in \fbOpen{\I}^{\op}$, we have that $\Ffr^{\bullet}\left({\pi_0(\I \setminus V)}\right) \simeq F(V)$.
	\item We have $\Tot^{n}\Ffr^{\bullet} \simeq \pT_n F(\I)$ and $\Tot \Ffr^{\bullet} \simeq \holim_{n \geq 0} \pT_n F(\I)$.
	\end{enumerate}
\end{corollary}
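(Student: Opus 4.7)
Both claims will follow from the equivalence of simplicial functor categories in Theorem~\ref{thm:goodwilliecos}, with a localisation argument needed for part (ii).

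\textbf{Part (i).} This is immediate. By construction, $\Ffr^{\bullet}$ satisfies $\Ffr^{\bullet} \circ \kappa \simeq F$ as good functors on $\fbOpen{\I}^{\op}$. Evaluating this weak equivalence at $V$ yields $F(V) \simeq \Ffr^{\bullet}(\kappa(V)) = \Ffr^{\bullet}(\pi_0(\I \setminus V))$.

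\textbf{Part (ii).} A $W \in \bOpen{\I}{n}$ has $k+2$ components for some $0 \leq k \leq n$, so $\I \setminus W$ has $k+1$ components and $\kappa(W) = [k]$. Thus $\kappa$ restricts to a functor $\kappa_n \colon \bOpen{\I}{n}^{\op} \to \ssimplex^{\leq n}$, where $\ssimplex^{\leq n}$ denotes the full subcategory of $\ssimplex$ on $\{[0], \ldots, [n]\}$. Substituting $F \simeq \Ffr^{\bullet} \circ \kappa$ into the defining homotopy limit for the Taylor approximation gives
\[
  \pT_n F(\I) = \holim_{W \in \bOpen{\I}{n}^{\op}} F(W) \simeq \holim_{W \in \bOpen{\I}{n}^{\op}} \Ffr^{\bullet}(\kappa_n(W)).
\]
The key step is to show that $\kappa_n$ becomes a Dwyer--Kan equivalence after inverting the isotopy equivalences in $\bOpen{\I}{n}^{\op}$: any two objects with the same number of components are connected by an isotopy equivalence inducing an isomorphism on $\kappa$, and after localisation the hom-sets are precisely the order-preserving maps between the complementary component sets. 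Since $F$ is isotopy invariant, the diagram $\Ffr^{\bullet} \circ \kappa_n$ descends (up to weak equivalence) to this localisation, yielding
\[
  \holim_{W \in \bOpen{\I}{n}^{\op}} \Ffr^{\bullet}(\kappa_n(W)) \simeq \holim_{[k] \in \ssimplex^{\leq n}} \Ffr^{\bullet}([k]) = \Tot^n \Ffr^{\bullet}.
\]
For the totalisation statement, combine this with the standard equivalence $\Tot \Ffr^{\bullet} \simeq \holim_{n \geq 0} \Tot^n \Ffr^{\bullet}$, valid for Reedy fibrant cosimplicial spaces (\cf \cite[Chapter XI]{BK72}).

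\textbf{Main obstacle.} The technical core is the Dwyer--Kan equivalence for $\kappa_n$ after localising isotopy equivalences; this is essentially a local version of Theorem~\ref{thm:goodwilliecos} and is most cleanly handled using the infinity-categorical results of \cite{AF15} already invoked in its proof. An alternative route is to promote the global equivalence of Theorem~\ref{thm:goodwilliecos} to a compatible one on the restricted functor categories $\Fun(\ssimplex^{\leq n}, \CGH) \simeq \Fun^{\mathrm{g}}(\bOpen{\I}{n}^{\op}, \CGH)$, from which the identification of $\Tot^n \Ffr^{\bullet}$ and $\pT_n F(\I)$ as right Kan extensions evaluated at $[-1]$ and $\I$ respectively follows formally.
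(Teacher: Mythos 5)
Your proposal is correct and follows essentially the same route as the paper: part (i) by evaluating the equivalence $\Ffr^{\bullet}\circ\kappa\simeq F$, and part (ii) by identifying $\holim_{\bOpen{\I}{n}}F$ with $\holim_{\ssimplex^{\leq n}}\Ffr^{\bullet}$ through the localisation of $\bOpen{\I}{n}$ at isotopy equivalences (the paper invokes Corollary~\ref{cor:chaincatequi} plus the observation that precomposition with the localisation functor preserves the relevant limit cones, which is your descent step in slightly different clothing).
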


\begin{definition}\label{def:cosmodel}
	Using the notations of the above corollary, we call a cosimplicial space~$\Ffr^{\bullet}$ satisfying the i) and ii) of Corollary \ref{cor:goodwilliecosprop} a \emph{cosimplicial space associated to the good functor~$F$}.
	
	Because of $\Ffr^{\bullet}$ enjoys the property of Corollary \ref{cor:goodwilliecosprop}.ii), we call $\Ffr^{\bullet}$ a \emph{cosimplicial model for the tower of fibrations}
		\[
		\cdots \xrightarrow{r_{n+1}(\I)} \pT_n F(\I) \xrightarrow{r_{n}(\I)} \pT_{n-1} F(\I) \xrightarrow{r_{n-1}(\I)} \cdots \xrightarrow{r_{1}(\I)} \pT_0 F(\I),
		\]
		obtained by evaluation of the Taylor tower of $F$ on $\I$.
\end{definition}

\begin{remark}\label{rmk:emb&conf}
  Let $n = \#(\pi_0(\I \setminus V ))-1$.
  Denote by $\Embcos^{\bullet}$ a cosimplicial space associated to $\Emb(\blank)$.
  Corollary~\ref{cor:goodwilliecosprop}.\rom{1} implies that $\Embcos^{n}$ is weakly homotopy equivalent to the Cartesian products of the configuration space of $n$-points in $\RR^2 \times \udisc^1$ (points of embeddings) and $n$-copies of $\usphere^2$ (tangent vectors at the embedded points).
  We will use this relation to configuration spaces in Section \ref{sec:hssembcos}. 
\end{remark}

Now we work towards proofs of Theorem \ref{thm:goodwilliecos} and Corollary \ref{cor:goodwilliecosprop}, which is, to the best of our knowledge, not available in the literature. 
Let us begin by introducing several categories.

\begin{definition}
  \begin{enumerate}
    \item Define the category $\CatMan{m}$ of smooth oriented $m$-dimensional manifolds. 
      Objects of $\CatMan{m}$ are smooth oriented manifolds of dimension $m$, and the morphisms are the orientation-preserving smooth embeddings.
    \item Define the simplicial category $\infCatMan{m}$ of smooth oriented $m$-dimensional manifolds. 
      This category has the same objects as $\CatMan{m}$, and the morphisms are spaces of orientation-preserving smooth embeddings, equipped with the compact-open topology.
    \item Define the full subcategory $\CatDisc{m}$ of $\CatMan{m}$ whose objects are finite disjoint unions of $\RR^{m}$ and $\RR_{\geq 0} \times \RR^{m-1}$.
    \item Define the full simplicial subcategory $\infCatDisc{m}$ of $\infCatMan{m}$ that has the same objects as $\CatDisc{m}$.
    \item Let $M$ be a smooth oriented manifold of dimension $m$. 
      Define the category 
      \[
      \CatDiscOver{m}{M} \coloneqq \CatDisc{m} \underset{\CatMan{m}}{\times} \CatManOver{m}{M},
      \]
      where $\CatManOver{m}{M}$ is the slice-category over $M$.
      Objects of $\CatDiscOver{m}{M}$ are embeddings of finite disjoint unions of $\RR^m$ and $\RR_{\geq 0} \times \RR^{m-1}$ into $M$.
    \item Let $M$ be a smooth oriented manifold of dimension $m$. 
      Define the simplicial category 
      \[
      \infCatDiscOver{m}{M} \coloneqq \infCatDisc{m} \underset{\infCatMan{m}}{\times} \infCatManOver{m}{M},
      \]
      where $\infCatManOver{m}{M}$ is the over category over $M$.
    \item Let $M$ be a smooth oriented manifold of dimension $m$. 
      Define the subcategory $\Category{Isot}_{\mathrm{m} / \mathrm{M}}$ of $\CatDiscOver{m}{M}$ which has the same objects as $\CatDiscOver{m}{M}$, but only the morphisms that are isotopy equivalences. 
  \end{enumerate}
\end{definition}

\begin{situation}
  Let $\Cca$ and $\Dca$ be simplicially enriched categories and $W$ be a set of morphisms in $\Cca$.
  Denote by $\Cca[W^{-1}]$ the simplicial localisation of $\Cca$ with respect to $W$. 
  It is a simplicial category together with a functor $L \colon \Cca \to \Cca[W^{-1}]$.
  There are several models \cite{DK80}\footnote{In the model for simplicial localisation developed in \cite{DK80}, the localisation functor $L$ exists in the homotopy category $\ho\Category{Cat}_{\ssimplex}$}, \cite{DK802} and \cite[A.3.5]{Lur09} etc.~for simplicial localisations, which are all equivalent after suitable fibrant or cofibrant replacements \cite{Ste17}. 
  Simplicial localisations enjoys the following universal property~\cite[Proposition~A.3.5.5]{Lur09}: the functor $L$ induces an injective map 
  \begin{equation}\label{eq:univ-prop-simplicial-loc}
     \Hom_{\ho\Category{Cat}_{\ssimplex}}\left(\Cca[W^{-1}], \Dca\right) \to \Hom_{\ho\Category{Cat}_{\ssimplex}}\left(\Cca, \Dca\right)
  \end{equation}
  whose image is the subset of functors $\Cca \to \Dca$ that sends $W$ to equivalences in $\Dca$.
\end{situation}

\begin{proposition}[Ayala--Francis]\label{prop:ayala}
  The canonical functor 
  \[
  \CatDiscOver{m}{M} \hookrightarrow \infCatDiscOver{m}{M}
  \]
  induces an equivalence of simplicial categories
  \[
    \CatDiscOver{m}{M} [\Category{Isot}_{\mathrm{m} / \mathrm{M}}^{-1}] \simeq \infCatDiscOver{m}{M}.
  \]
\end{proposition}

\begin{proof}
  See \cite[Proposition 2.19]{AF15}.
\end{proof}

\begin{remark}
  In \cite{AF15}, the authors uses the language of $\infty$-categories (quasi-categories). 
  For a translation between simplicial categories and $\infty$-categories, see~\cite[Section 1.1.3, 1.1.4 and 1.1.5]{Lur09}.
  For a comparison between simplicial and $\infty$-categorical localisations, see~\cite{Ste17}.
\end{remark}

For our application, we consider $m = 1 $ and $M = \I$.

\begin{definition}\label{def:surjdisk}
  Define the subcategory $\CatDiscOver{1}{\I}^{\partial}$ of $\CatDiscOver{1}{\I}$ whose objects are the embeddings such that the boundary $\partial \I$ of $\I$ is in the image. 
  
  In the same way, let $\Category{Isot}_{\mathrm{1} / \mathrm{\I}}^{\partial}$ be the subcategory of $\Category{Isot}_{\mathrm{1} / \mathrm{\I}}$ whose objects are the embeddings such that $\partial \I$ is in their images.
  Explicitly, objects of $\CatDiscOver{1}{\I}^{\partial}$ are embeddings of the form $$[0, \epsilon) \sqcup \bigsqcup_{k = 1}^n \RR \sqcup (1-\epsilon', 1] \hookrightarrow \I, \text{ for }n \in \NN.$$
\end{definition}

Using the same proof strategy as Proposition~\ref{prop:ayala}, we have

\begin{corollary}\label{cor:ayalacor}
  The canonical functor $\CatDiscOver{1}{\I}^{\partial} \to \infCatDiscOver{1}{\I}^{\partial}$ induces an equivalence of simplicial categories
  \begin{equation*}\tag*{\qedsymbol}
    \CatDiscOver{1}{\I}^{\partial}\left[\left( \Category{Isot}_{\mathrm{1} / \mathrm{\I}}^{\partial} \right)^{-1} \right] \simeq \infCatDiscOver{1}{\I}^{\partial}.
  \end{equation*}
\end{corollary}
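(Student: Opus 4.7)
My plan is to mirror the proof of Proposition~\ref{prop:ayala} (Ayala--Francis), imposing the boundary condition $\partial \I \subseteq \im$ throughout. The argument splits into the usual two checks: essential surjectivity, and fully faithfulness at the level of mapping spaces.

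\emph{Essential surjectivity} is automatic. By Definition~\ref{def:surjdisk}, the categories $\CatDiscOver{1}{\I}^{\partial}$ and $\infCatDiscOver{1}{\I}^{\partial}$ have the same class of objects, namely the embeddings $[0,\epsilon) \sqcup \bigsqcup_{k=1}^n \RR \sqcup (1-\epsilon',1] \hookrightarrow \I$, and Dwyer--Kan localisation does not change the class of objects.

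\emph{Fully faithfulness on mapping spaces.} Fix $x, y \in \CatDiscOver{1}{\I}^{\partial}$. The inclusion of simplicial categories $\infCatDiscOver{1}{\I}^{\partial} \hookrightarrow \infCatDiscOver{1}{\I}$ is full: any orientation-preserving embedding of (the domain of) $x$ into (the domain of) $y$ over $\I$ automatically carries the preimage of $\partial \I$ in $x$ to the preimage of $\partial \I$ in $y$, so no extra constraint on morphisms is needed. Consequently $\Map_{\infCatDiscOver{1}{\I}^{\partial}}(x,y) = \Map_{\infCatDiscOver{1}{\I}}(x,y)$, which by Proposition~\ref{prop:ayala} agrees with the mapping space in $\CatDiscOver{1}{\I}[\Category{Isot}_{\mathrm{1} / \mathrm{\I}}^{-1}]$. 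It therefore suffices to show that the natural map
\[
\Map_{\CatDiscOver{1}{\I}^{\partial}[(\Category{Isot}_{\mathrm{1} / \mathrm{\I}}^{\partial})^{-1}]}(x, y) \longrightarrow \Map_{\CatDiscOver{1}{\I}[\Category{Isot}_{\mathrm{1} / \mathrm{\I}}^{-1}]}(x, y)
\]
induced by the full-subcategory inclusion is a weak equivalence. I would establish this by a cofinality argument in the hammock model of the Dwyer--Kan mapping space: given any hammock whose outer vertices $x, y$ contain $\partial \I$ in their images, I would replace each intermediate vertex $w$ by $w \sqcup [0, \epsilon) \sqcup (1 - \epsilon', 1]$, with the two adjoined half-open intervals realising a fixed collar of $\partial \I$ chosen once and for all inside $\im(x) \cap \im(y)$. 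This lands the zigzag in $\CatDiscOver{1}{\I}^{\partial}$, and contractibility of the space of such collars ensures that the induced functor between hammock categories is initial.

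The main obstacle is precisely this cofinality step, which requires a relative isotopy extension theorem for smooth one-manifolds (performed stationary on a collar of $\partial \I$) together with contractibility of the space of such collars. Both ingredients are standard and especially tractable in dimension one, since the germ of a one-manifold at a boundary point is rigid, so the adaptation of the Ayala--Francis argument should go through without essential additional modification.
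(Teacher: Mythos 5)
Your reduction is sound as far as it goes: the two categories have the same objects, the boundary subcategory is full in both the discrete and the topological setting, and Proposition~\ref{prop:ayala} identifies $\Map_{\infCatDiscOver{1}{\I}}(x,y)$ with the mapping space in $\CatDiscOver{1}{\I}[\Category{Isot}_{\mathrm{1} / \mathrm{\I}}^{-1}]$. The gap lies in the step you yourself flag as the main obstacle. First, the operation ``replace each intermediate vertex $w$ by $w \sqcup [0,\epsilon) \sqcup (1-\epsilon',1]$'' is not well defined: a morphism in the over-category forces an inclusion of images, so along a zigzag $x \to w_1 \leftarrow w_2 \to \cdots \to y$ the images of the intermediate vertices are only constrained by alternating inclusions, and $\im(w)$ may meet your fixed collar without containing it (for instance a component with image $(\epsilon/2, 3\epsilon)$), in which case the proposed disjoint union does not embed into $\I$. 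Repairing this requires first isotoping such vertices off the collar, and that modification must be carried out coherently across an entire hammock (all rows and all vertical maps), not just on a single zigzag. Second, ``initial'' is not the relevant condition: the hammock mapping space is a homotopy colimit over zigzag types of nerves of hammock categories, so the comparison functor must be homotopy final, i.e.\ have contractible comma categories, and contractibility of the space of collars alone does not establish this.

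More importantly, the detour through $\CatDiscOver{1}{\I}$ is not what the paper means by ``the same proof strategy as Proposition~\ref{prop:ayala}.'' The intended argument runs the Ayala--Francis proof directly on the boundary categories: one checks that every connected component of every mapping space of $\infCatDiscOver{1}{\I}^{\partial}$ is contractible (spaces of orientation-preserving embeddings of finite disjoint unions of copies of $\RR$ and boundary collars are homotopy discrete in dimension one), and that $\pi_0$ of these mapping spaces is exactly what the $1$-categorical localisation of $\CatDiscOver{1}{\I}^{\partial}$ at $\Category{Isot}_{\mathrm{1} / \mathrm{\I}}^{\partial}$ computes; compare the proof of Proposition~\ref{prop:catdisksequi}.\rom{1}. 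This exhibits $\infCatDiscOver{1}{\I}^{\partial}$ as the localisation directly, avoiding any comparison between two different localisations and hence the cofinality problem altogether.
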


\begin{proposition}\label{prop:catdisksequi}
  \begin{enumerate}
    \item (Ayala--Francis) The functor
      \begin{align*}
        S \colon \left( \infCatDiscOver{1}{\I}^{\partial} \right)^{\op} &\to \CatsInd_{+} \\
        \left(i \colon V \hookrightarrow \I \right) &\mapsto \pi_0(\I \setminus i(V))
      \end{align*}
      is an equivalence of simplicial categories.
    \item The functor $\mathrm{Im} \colon \CatDiscOver{1}{\I}^{\partial} \to \fbOpen{\I}{}$, $(i \colon V \hookrightarrow \I) \mapsto \im(i)$ is an equivalence of ordinary categories.
    	Let $\Category{Isot}(\I)$ be the subcategory of $\fbOpen{\I}{}$ which has the same objects as $\fbOpen{\I}{}$ with only the morphisms that are isotopy equivalences.
      Then $\restr{\mathrm{Im}}{\Category{Isot}_{\mathrm{1} / \mathrm{\I}}^{\partial}} \colon \Category{Iso}_{\mathrm{1} / \mathrm{\I}}^{\partial} \to \Category{Isot}(\I)$ is an equivalence of categories.
  \end{enumerate}
\end{proposition}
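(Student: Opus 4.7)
The plan is to treat the two parts separately. Part (i), attributed to Ayala--Francis, should follow from Proposition~\ref{prop:ayala} adapted to the boundary setting together with a direct computation of the mapping spaces; I would give a short sketch. Part (ii) is elementary and admits a direct verification.

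For part (i), essential surjectivity is clear: given any non-empty $T \in \CatsInd_{+}$ with $|T| = n+1$, one embeds the model $[0,\epsilon) \sqcup \bigsqcup_{k=1}^{n}\RR \sqcup (1-\epsilon',1]$ into $\I$ so that the $n+1$ components of the complement are ordered as $T$. For mapping spaces, the morphism space from $i_1 \colon V_1 \hookrightarrow \I$ to $i_2 \colon V_2 \hookrightarrow \I$ in $\infCatDiscOver{1}{\I}^{\partial}$ is modelled by the homotopy fiber of the restriction $\Emb(V_1,V_2) \to \Emb(V_1,\I)$ over $i_1$. Because $V_1$ and $V_2$ are disjoint unions of one-dimensional model pieces, this fiber decomposes over the set of combinatorial patterns describing which component of $V_1$ is sent into which component of $V_2$, and each resulting factor is contractible by the classical contractibility of orientation-preserving self-embeddings of intervals and half-intervals (with boundary fixed). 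This combinatorial data is canonically in bijection with the set of order-preserving maps $\pi_0(\I \setminus i_2(V_2)) \to \pi_0(\I \setminus i_1(V_1))$, which is exactly what $S$ records. The main technical obstacle here will be making the fiber decomposition rigorous, but in dimension one this reduces to standard facts about embedding spaces of intervals.

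For part (ii), essential surjectivity of $\mathrm{Im}$ is immediate: every $U \in \fbOpen{\I}$ is a finite disjoint union of open and half-open intervals containing $\partial \I$, and choosing diffeomorphisms of each component with the corresponding model piece realises $U$ as the image of an object of $\CatDiscOver{1}{\I}^{\partial}$. For full-faithfulness, a morphism $f \colon i_1 \to i_2$ is a smooth embedding $V_1 \to V_2$ with $i_2 \circ f = i_1$; this forces $\im(i_1) \subseteq \im(i_2)$ and, since $i_2$ is injective onto its image, it uniquely determines $f = i_2^{-1} \circ i_1$. Thus morphisms biject with inclusions of images, matching the morphisms in $\fbOpen{\I}$.

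For the restricted statement, I would identify the isotopy equivalences on both sides. By Definition~\ref{def:isoequi}, a morphism $f \colon V_1 \to V_2$ is an isotopy equivalence iff $V_1$ and $V_2$ are diffeomorphic as manifolds with boundary; since both are disjoint unions of open intervals and two boundary half-intervals, this is equivalent to $\#\pi_0(V_1) = \#\pi_0(V_2)$. On the other hand, an inclusion $U \subseteq W$ in $\fbOpen{\I}$ is an isotopy equivalence iff each component of $W$ contains exactly one component of $U$, equivalently $\#\pi_0(U) = \#\pi_0(W)$. These conditions correspond under $\mathrm{Im}$ since $\#\pi_0(V_i) = \#\pi_0(\im(i_i))$, so the restriction is again essentially surjective and fully faithful. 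I anticipate no serious obstacle; the only care needed is in unwinding the definitions.
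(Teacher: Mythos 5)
Your overall route matches the paper's: for (i) the paper likewise reduces everything to the fact that every connected component of each mapping space of $\infCatDiscOver{1}{\I}^{\partial}$ is contractible (citing \cite[Lemma~3.11]{AF15}), and your fibre decomposition over combinatorial patterns, identified with order-preserving maps $\pi_0(\I\setminus i_2(V_2))\to\pi_0(\I\setminus i_1(V_1))$, is exactly the content the paper leaves implicit; for (ii) the paper's argument is precisely your observation that both hom-sets are empty or singletons. So the approach is sound and the extra detail you supply for (i) is correct, including the contravariance.

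There is, however, one genuine slip in your treatment of the restricted equivalence: the claim that a morphism is an isotopy equivalence \emph{iff} the two objects have the same number of components is false. Take $U=[0,0.1)\cup(0.2,0.3)\cup(0.9,1]$ and $W=[0,0.35)\cup(0.4,0.5)\cup(0.9,1]$; then $U\subseteq W$ and $\#\pi_0(U)=\#\pi_0(W)=3$, but the inclusion is not an isotopy equivalence since the component $(0.4,0.5)$ of $W$ contains no component of $U$ (any retraction would fail to be isotopic to the identity already on $\pi_0$). The correct characterisation is the one you state first and then wrongly declare ``equivalent'': the inclusion must induce a bijection on $\pi_0$, i.e.\ each component of the target contains exactly one component of the source. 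With that correction your argument goes through, and in fact the restricted statement is immediate from full faithfulness of $\mathrm{Im}$ together with the tautological fact that $f$ is an isotopy equivalence iff the inclusion $\im(i_1)\subseteq\im(i_2)$ is one. Finally, note that both you and the paper gloss over the object $\I\in\fbOpen{\I}$ (corresponding to $\emptyset\in\CatsInd_{+}$): it is not a disjoint union of the model pieces, so essential surjectivity as literally stated needs this augmentation object to be handled (or excluded) explicitly.
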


\begin{proof}
 \rom{1} We can prove that the functor is essentially surjective and fully faithful.
 This is true since the every connected component of each space of morphism of $\infCatDiscOver{1}{\I}^{\partial}$ is contractible.
 See also \cite[Lemma~3.11]{AF15}. 
 
 \rom{2} First $\mathrm{Im}$ is essentially surjective, because the boundary of $\I$ is in the image of $i$ for any $i \in \Category{Isot}_{\mathrm{1} / \mathrm{\I}}^{\partial}$. 
    For any two objects $i_1 \colon V_1 \hookrightarrow \I$ and $i_2 \colon V_2 \hookrightarrow \I$ in $\CatDiscOver{1}{\I}^{\partial}$, the morphism set $\Mor_{\CatDiscOver{1}{\I}^{\partial}}(i_1, i_2)$ is either empty or a one element set.
    Also the morphism set~$\Mor_{\bOpen{\I}{}}(\im(i_1), \im(i_2))$ is either empty or has one element.
    Thus $\mathrm{Im}$ is fully faithful.
    Similarly it follows that $\restr{\mathrm{Im}}{\Category{Isot}_{\mathrm{1} / \mathrm{\I}}^{\partial}}$ is an equivalence.
\end{proof}

\begin{corollary}\label{cor:chaincatequi}
  We have the following equivalences of simplicial categories
  \begin{equation*}\tag*{\qedsymbol}
    \fbOpen{\I}{}[\Category{Iso}(\I)^{-1}] \simeq \CatDiscOver{1}{\I}^{\partial}\left[\left( \Category{Isot}_{\mathrm{1} / \mathrm{\I}}^{\partial} \right)^{-1} \right] \simeq \infCatDiscOver{1}{\I}^{\partial} \overset{S}{\simeq} \CatsInd_{+}^{\op}.
  \end{equation*}
\end{corollary}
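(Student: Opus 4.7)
The plan is to obtain the stated chain by splicing together the three preceding results. The middle equivalence
\[
\CatDiscOver{1}{\I}^{\partial}\bigl[(\Category{Isot}_{\mathrm{1}/\mathrm{\I}}^{\partial})^{-1}\bigr] \simeq \infCatDiscOver{1}{\I}^{\partial}
\]
is exactly the content of Corollary~\ref{cor:ayalacor}, and the rightmost equivalence $\infCatDiscOver{1}{\I}^{\partial} \overset{S}{\simeq} \CatsInd_{+}^{\op}$ is obtained from Proposition~\ref{prop:catdisksequi}.\rom{1} simply by passing to opposite categories, since $S$ is already defined with domain $(\infCatDiscOver{1}{\I}^{\partial})^{\op}$. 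Thus only the leftmost equivalence requires additional argument.

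For the leftmost equivalence I would invoke Proposition~\ref{prop:catdisksequi}.\rom{2}. That proposition produces an equivalence of ordinary categories $\mathrm{Im} \colon \CatDiscOver{1}{\I}^{\partial} \isoarr \fbOpen{\I}$ whose restriction to the wide subcategory $\Category{Isot}_{\mathrm{1}/\mathrm{\I}}^{\partial}$ of isotopy equivalences is an equivalence onto $\Category{Isot}(\I)$. Rephrased in the language of relative categories, $\mathrm{Im}$ is an equivalence of the pairs $(\CatDiscOver{1}{\I}^{\partial}, \Category{Isot}_{\mathrm{1}/\mathrm{\I}}^{\partial})$ and $(\fbOpen{\I}, \Category{Isot}(\I))$. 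Since the Dwyer--Kan hammock localisation is a 2-functor from relative categories to simplicial categories sending equivalences of pairs to equivalences of simplicial categories (\cf \cite{DK80}), applying it to $\mathrm{Im}$ yields the leftmost equivalence.

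Composing the three equivalences produces the stated chain. No genuinely hard step arises; the argument is essentially formal bookkeeping. The one point that deserves a brief sentence of justification is the 2-functoriality of Dwyer--Kan localisation used in the first step, which is classical and follows directly from the hammock construction: both sides of the localisation are computed from the same combinatorial data up to the equivalence $\mathrm{Im}$, so the induced map on simplicial mapping spaces is levelwise a weak equivalence of simplicial sets.
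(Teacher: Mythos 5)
Your proposal is correct and matches the paper's intent exactly: the corollary is stated with an immediate-proof marker precisely because it is obtained by splicing Corollary~\ref{cor:ayalacor}, Proposition~\ref{prop:catdisksequi}.\rom{1} (after passing to opposites), and Proposition~\ref{prop:catdisksequi}.\rom{2} together with the invariance of Dwyer--Kan localisation under equivalences of relative categories. Your explicit justification of that last invariance step is the only content the paper leaves implicit, and it is the right one.
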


\begin{proof}[Proof of Theorem \ref{thm:goodwilliecos}]
	Recall the functor $\kappa \colon \fbOpen{\I}^{\op} \to \CatsInd_{+}$ with $\kappa(V) = \pi_0(\I \setminus V)$ from Theorem \ref{thm:goodwilliecos}.
	This functor fits into the equivalences of Corollary \ref{cor:chaincatequi}, \ie we have the following commutative diagram
	\begin{equation}\label{diag:equivfunctors}
		\begin{tikzcd}
	                   & \CatsInd_{+}^{\op}           & \infCatDiscOver{1}{\I}^{\partial} \arrow[l, "S"', "\simeq"]           \\
	\fbOpen{\I} \arrow[r, "L"] \arrow[ru, "\kappa"] & \fbOpen{\I}[\Category{Iso}(\I)^{-1}] \arrow[u, "\kappa_{\mathrm{loc}}"'] & \CatDiscOver{1}{\I}^{\partial}\left[\left( \Category{Isot}_{\mathrm{1} / \mathrm{\I}}^{\partial} \right)^{-1} \right] \arrow[l, "\simeq"] \arrow[u, "\simeq"']
	\end{tikzcd}
	\end{equation}
	where $\kappa_{\mathrm{loc}}$ is the induced map of $\kappa$ by the universal property of the localisation functor $L$, since $\kappa$ sends an isotopy equivalence morphisms in $\fbOpen{\I}$ to isomorphisms in $\CatsInd_{+}$.
	
	By the universal property~(\ref{eq:univ-prop-simplicial-loc}) of localisation and isotopy invariance of good functors, precomposing with the localisation functor $L$ induces an bijection
	\[
	\Hom_{\ho\Category{Cat}_{\ssimplex}}^{\mathrm{g}}\left(\fbOpen{\I}[\Category{Iso}(\I)^{-1}]^{\op}, \CGH\right) \xrightarrow{L \circ \blank} \Hom_{\ho\Category{Cat}_{\ssimplex}}\left(\fbOpen{\I}^{\op}, \CGH \right)
	\]
	of the set of homotopy classes of functors.
	By the equivalences of Corollary \ref{cor:chaincatequi}, the map $\kappa_{\mathrm{loc}}$ in the diagram (\ref{diag:equivfunctors}) also induces an equivalence
	\[
	\Fun\left(\CatsInd_{+}, \CGH\right) \xrightarrow{\kappa_{\mathrm{loc}} \circ \blank} \Fun\left(\fbOpen{\I}[\Category{Iso}(\I)^{-1}]^{\op}, \CGH\right) 
	\]
	of simplicial functor categories.
	
	Therefore, the map $\kappa$ induces an bijection
	\[
	\Hom_{\ho\Category{Cat}_{\ssimplex}}\left(\CatsInd_{+}, \CGH\right) \xrightarrow{\kappa \circ \blank} \Hom_{\ho\Category{Cat}_{\ssimplex}}^{\mathrm{g}}\left(\fbOpen{\I}^{\op}, \CGH\right).
	\]
\end{proof}

\begin{proof}[Proof of Corollary \ref{cor:goodwilliecosprop}]
	Let $\Ffr^{\bullet}_{+}$ be an augmented cosimplicial space such that $\kappa \circ \Ffr^{\bullet}_{+} \simeq F$. 
	Then, part \rom{1} follows by composition of functors. 
	
	As for the proof of Corollary \ref{cor:goodwilliecosprop}.ii), We have 
	\[
	\Tot^{n} \Ffr_{\bullet} = \holim_{\ssimplex^{\bullet \leq n}} \Ffr_{\bullet} \simeq \holim_{L(\bOpen{\I}{n})} F_{\mathrm{loc}} \simeq \holim_{\bOpen{\I}{n}} F
	\]
	where $L$ is the localisation functor in diagram (\ref{diag:equivfunctors}) and $F_{\mathrm{loc}}$ denotes the induced functor from $\fbOpen{\I}[\Category{Iso}(\I)^{-1}]^{\op}$ to $\CGH$ by $F$.
	The first equality is by definition.
	The second weak homotopy equivalence comes from the equivalences of categories from Corollary~\ref{cor:chaincatequi}.
	The third weak homotopy equivalence follows by observing that under the functor
	\[
	\Fun\left(\fbOpen{\I}[\Category{Iso}(\I)^{-1}]^{\op}, \CGH\right) \xrightarrow{L \circ \blank} \Fun\left(\fbOpen{\I}^{\op}, \CGH\right)
	\]
	a homotopy limit cone over a functor $\fbOpen{\I}[\Category{Iso}(\I)^{-1}]^{\op} \to \CGH$ restricts to a homotopy limit cone over a functor $\fbOpen{\I}^{\op} \to \CGH$.
\end{proof}

\begin{remark}\label{rmk:othermodel}
  There is another cosimplicial model $C^{\bullet}$ for the above tower of fibrations, constructed via a compactification of $\Conf_{n}(\RR^2 \times \udisc^1)$, \cf \cite[Section 3]{BCKS17} and \cite[Section 6]{Sin09}. 
  Compared with $\Embcos^{\bullet}$, the cosimplicial space $C^{\bullet}$ has the advantage that it is geometric and various versions of $C^{\bullet}$ have already been used in context concerning finite type invariants of $3$-manifold, \cf \cite{AS94} and \cite{BT94}.
  Using this cosimplicial space $C^{\bullet}$, the authors of \cite{BCKS17} give the space $\Tot^nC^{\bullet} \simeq \pT_n \Emb(\I)$ an $\E_1$-algebra structure, which is used to define the abelian group structure on $\pi_0(\pT_n \Emb(\I))$ mentioned in Remark~\ref{rmk:monoid-BCKS}, \cf \cite[Corollary~4.13, Section~5.7]{BCKS17}.
  We are exploring whether we can define similar multiplicative structure on the cosimplicial space $\Embcos^{\bullet}$.
\end{remark}

\begin{remark}
  One way to generalise Theorem~\ref{thm:goodwilliecos} in higher dimension, \ie good functors from $\bOpen{M}{}^{\op}$ with $\dim M > 1$, is to consider configuration categories $\operatorname{con}(M)$ associated to a smooth manifold $M$.
  See \cite{BW18} for a detailed explanation on this subject.
\end{remark}

\section{An integral homotopy spectral sequence for $\Embcos^{\bullet}$}\label{sec:hssembcos}

\counterwithin{equation}{subsection}

To a cosimplicial space $\Embcos^{\bullet}$ (Definition \ref{def:cosmodel}) associated to the functor $\Emb(\blank)$, we can associate the Bousfield--Kan homotopy spectral sequence $\{E_{p,q}\}_{q \geq p \geq 0}$ with integral coefficients, \cf \cite[Chapter~X]{BK72}. 
In this section, we will first briefly recall some properties of this spectral sequence (Section~\ref{sec:cosss}) and then give a concrete computation of the $d^1$-differentials that map into the diagonal terms (Section~\ref{sec:hssknotcal}). 
For the computation, we make use of the calculation of the homotopy groups for $\Conf_n\left(\RR^2 \times \udisc^1 \right)$, which we will recall in Section~\ref{sec:htpgrconf}.


\subsection{A spectral sequence for cosimplicial spaces}\label{sec:cosss}

We begin by introducing techniques that we need for the computation of Bousfield--Kan spectral sequences.
The main reference for this section is \cite[Chapter X]{BK72}.

\begin{notation}\label{not:cosimplicial}
  Let $X^{\bullet}\colon \CatsInd \to \CGH $ be a cosimplicial space. 
  For $0 \leq i \leq n$, denote its coface map by $\delta^i \colon X^{n-1} \to X^{n}$ and its codegeneracy maps by $s^i \colon X^{n+1} \to X^{n}$.
\end{notation}

Given a cosimplicial space $X^{\bullet} \colon \CatsInd \to \CGH$, there is a tower of fibrations (\cf \cite[Chapter~6, Section~6.1]{BK72})
\begin{align}\label{ali:tottower}
  \cdots \to \Tot^{n+1}X^{\bullet} \to \Tot^{n}X^{\bullet} \to \cdots \to \Tot^{1}X^{\bullet} \to \Tot^{0}X^{\bullet}.  
\end{align}
Denote by $L^{n+1}X^{\bullet}$ the homotopy fibre of $\Tot^{n+1}X^{\bullet} \to \Tot^{n}X^{\bullet}$ and $L^0X^{\bullet} \coloneqq \Tot^0X^{\bullet}$.
Applying Bousfield--Kan homotopy spectral sequence, \cf \cite[Section~X.6]{BK72}, to the tower of fibrations (\ref{ali:tottower}), we obtain a spectral sequence approximating the homotopy groups of $\Tot X^{\bullet}$, whose first page is given by
\[
  E^1_{p,q} = \pi_{q-p}(L^{p}X^{\bullet}),
\]
where $q \geq p \geq 0$, and $E_{p, q}^{1} = 0$ otherwise.
With the help of the cosimplicial structure, we can calculate the homotopy groups of the spaces $L^{p}X^{\bullet}$ and the differentials $d^1 \colon E_{p-1, p}^1 \to E_{p, p}^1$ in terms of the homotopy groups of $X^{p}$.

\begin{proposition}[Bousfield--Kan]
Given a cosimplicial space $X^{\bullet} \colon \CatsInd \to \CGH$, we have 
\[
  \pi_{q-p}(L^p X^{\bullet}) \cong \pi_q \left(X^{p} \cap \bigcap_{i = 0}^{p-1} \ker(s^i)\right) \cong \pi_{q}(X^{p}) \cap \bigcap_{i=0}^{p-1}\ker(s^{i}_{\ast}),
\]
where the push-forward $s^i_{\ast} \colon \pi_q(X^{p}) \to \pi_q(X^{p-1})$ is induced by the codegeneracy maps $s^i$.
\end{proposition}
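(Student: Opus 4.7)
The plan is to realise $L^p X^{\bullet}$ as the $p$-fold loop space of a suitable ``normalised'' subspace of $X^p$, and then read off its homotopy groups via the long exact sequence associated to the matching object. I would first pass to a Reedy-fibrant replacement of $X^{\bullet}$, so that (i) the tower (\ref{ali:tottower}) is a tower of fibrations with point-set fibres computing the homotopy fibres $L^n X^{\bullet}$, and (ii) for each $p$ the canonical map $X^p \to M^p X^{\bullet}$ to the $p$-th matching object is a fibration. Write $N^p X^{\bullet} \coloneqq X^p \cap \bigcap_{i=0}^{p-1} \ker(s^i)$ for the iterated fibre, \ie the subspace of $X^p$ of points sent to the basepoint by every codegeneracy; equivalently, $N^p X^{\bullet}$ is the fibre of $X^p \to M^p X^{\bullet}$.

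The first main step is to establish the equivalence $L^p X^{\bullet} \simeq \Omega^p N^p X^{\bullet}$. For this I would exploit that $\Tot^n X^{\bullet}$ is a weighted limit against the standard $n$-truncated cosimplex, and that the passage from truncation $p-1$ to $p$ fits in a pushout square attaching the pair $(\ssimplex^p, \partial \ssimplex^p)$. Applying the mapping-space functor $\Map(\blank, X^{\bullet})$ converts this into a pullback of spaces, exhibiting $\Tot^p X^{\bullet} \to \Tot^{p-1} X^{\bullet}$ as a base change of the fibration $X^p \to M^p X^{\bullet}$, and further identifies its fibre with the space of pointed maps from $\ssimplex^p / \partial \ssimplex^p \simeq \usphere^p$ into $N^p X^{\bullet}$. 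This is precisely $\Omega^p N^p X^{\bullet}$, and taking $\pi_{q-p}$ yields the first isomorphism $\pi_{q-p}(L^p X^{\bullet}) \cong \pi_q(N^p X^{\bullet})$.

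For the second isomorphism, I would apply the long exact sequence of the fibration $N^p X^{\bullet} \to X^p \to M^p X^{\bullet}$: a class in $\pi_q(X^p)$ lies in the image of $\pi_q(N^p X^{\bullet})$ precisely when its image in $\pi_q(M^p X^{\bullet})$ vanishes. Inducting on $p$ via the iterated-fibration description of $M^p X^{\bullet}$, whose successive fibres are controlled by the codegeneracies $s^i \colon X^p \to X^{p-1}$, this condition is equivalent to being killed by every $s^i_{\ast}$, giving $\pi_q(N^p X^{\bullet}) \cong \pi_q(X^p) \cap \bigcap_{i=0}^{p-1} \ker(s^i_{\ast})$.

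The main obstacle I anticipate is this last step, namely translating ``vanishes under the projection to $M^p X^{\bullet}$'' into ``killed by every individual $s^i_{\ast}$''. It is not a formal statement about point-set intersections of kernels, but requires splicing together the long exact sequences attached to each stage of the Reedy tower for $M^p X^{\bullet}$, which is precisely where Reedy fibrancy is essential. Once that is secured, the remainder is routine manipulation of fibre sequences.
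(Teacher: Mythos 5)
The paper does not actually prove this proposition; it defers entirely to \cite[Section~X.6.2]{BK72}. Your outline is in effect a reconstruction of Bousfield--Kan's own argument, and its first half is sound: after a Reedy-fibrant replacement, $\Tot^{p}X^{\bullet} \to \Tot^{p-1}X^{\bullet}$ is a base change of the map of mapping spaces determined by the pair $(\ssimplex^{p}, \partial\ssimplex^{p})$ and the matching fibration $X^{p} \to M^{p}X^{\bullet}$; its fibre is $\Map_{\ast}(\ssimplex^{p}/\partial\ssimplex^{p},\, N^{p}X^{\bullet}) \cong \Omega^{p}N^{p}X^{\bullet}$, where $N^{p}X^{\bullet} = X^{p} \cap \bigcap_{i}\ker(s^{i})$ is precisely the point-set fibre of the matching map. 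Taking $\pi_{q-p}$ gives the first isomorphism.

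The gap is in the second half, and it sits exactly where you flag the ``main obstacle''. The long exact sequence of $N^{p}X^{\bullet} \to X^{p} \to M^{p}X^{\bullet}$ only identifies the \emph{image} of $\pi_{q}(N^{p}X^{\bullet}) \to \pi_{q}(X^{p})$ with $\ker\bigl(\pi_{q}(X^{p}) \to \pi_{q}(M^{p}X^{\bullet})\bigr)$. To obtain the stated isomorphism you additionally need (a) injectivity of $\pi_{q}(N^{p}X^{\bullet}) \to \pi_{q}(X^{p})$, equivalently surjectivity of $\pi_{q+1}(X^{p}) \to \pi_{q+1}(M^{p}X^{\bullet})$ so that the boundary map vanishes, and (b) injectivity of $\pi_{q}(M^{p}X^{\bullet}) \to \prod_{i}\pi_{q}(X^{p-1})$, so that vanishing in the matching object is equivalent to being killed by each $s^{i}_{\ast}$ separately. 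Neither of these follows from Reedy fibrancy plus splicing of long exact sequences alone; the missing ingredient is the cosimplicial identities, which exhibit the cofaces as sections of the codegeneracies ($s^{i}\delta^{i} = \id$). These sections split each stage of the iterated-fibration description of $M^{p}X^{\bullet}$, forcing the matching map to be surjective on homotopy groups and the matching object to inject into the product on homotopy groups; this is the normalisation lemma of \cite[Chapter~X.6]{BK72} that your sketch implicitly relies on. Once that input is added, your argument closes.
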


\begin{proof}
  See \cite[Section~X.6.2]{BK72}.
\end{proof}

\begin{proposition}[Bousfield--Kan]\label{prop:cosimplicialss}
Given a cosimplicial space $X^{\bullet}$, the first page of the Bousfield--Kan homotopy spectral sequence of $X^{\bullet}$ is given by
\[
  E^1_{p,q} \cong \pi_{q}(X^{p}) \cap \bigcap_{i=0}^{p-1}\kernel(s^{i}_{\ast}),
\]
where $q \geq p \geq 0$, and the push-forward $s^i_{\ast} \colon \pi_q(X^{p}) \to \pi_q(X^{p-1})$ is induced by the codegeneracy maps $s^i$.
The differential $ d^1 \colon E_{p,q}^1 \to E_{p+1,q}^1$ on the first page is given by
\[
  x \mapsto \sum_{i=0}^{p+1}(-1)^i \delta^i_{\ast}(x),
\]
where the push-forward $\delta^i_{\ast} \colon \pi_q(X^{p}) \to \pi_q(X^{p+1})$ is induced by the coface maps $\delta^i$ on~$X^{p}$. 
\end{proposition}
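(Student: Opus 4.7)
Both statements are part of the general description of the Bousfield--Kan homotopy spectral sequence associated to the tower of fibrations~(\ref{ali:tottower}), \cf \cite[Chapter~X.6--7]{BK72}. The formula for $E^1_{p,q}$ is immediate from the preceding proposition, which already identifies $\pi_{q-p}(L^p X^{\bullet})$ with $\pi_q(X^p) \cap \bigcap_{i=0}^{p-1} \ker(s^i_{\ast})$; the geometric input behind that identification is a weak equivalence $L^p X^{\bullet} \simeq \Omega^p N^p(X^{\bullet})$, where $N^p(X^{\bullet}) \subseteq X^p$ denotes the Moore normalisation $\bigcap_{i=0}^{p-1} \ker(s^i)$, which comes from recognising $L^p X^{\bullet}$ as the space of pointed cosimplicial maps from $\Delta^{\bullet}/\partial \Delta^{\bullet}$ into $X^{\bullet}$ via the Reedy (matching) structure on $\CatsInd$.

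For the differential, my plan is to use the exact couple definition. Given $[x] \in \pi_{q-p}(L^p X^{\bullet}) = E^1_{p,q}$, the differential is the image of $x$ under the composition
\[
\pi_{q-p}(L^p X^{\bullet}) \to \pi_{q-p}(\Tot^p X^{\bullet}) \xrightarrow{\partial} \pi_{q-p-1}(L^{p+1} X^{\bullet}) = E^1_{p+1,q},
\]
where the first map is induced by the inclusion $L^p X^{\bullet} \hookrightarrow \Tot^p X^{\bullet}$ of the fibre and the second is the connecting homomorphism of the fibration $L^{p+1} X^{\bullet} \to \Tot^{p+1} X^{\bullet} \to \Tot^p X^{\bullet}$. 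Via the identification $L^{p+1} X^{\bullet} \simeq \Omega^{p+1} N^{p+1}(X^{\bullet})$, computing this connecting map amounts to understanding the attaching map of the top cell of $\Delta^{p+1}$ along its boundary $\partial \Delta^{p+1} = \bigcup_{i=0}^{p+1} \delta^i(\Delta^p)$; since the cellular boundary of $\Delta^{p+1}$ is the formal alternating sum $\sum_{i=0}^{p+1}(-1)^i \delta^i$ of its codimension-one faces, and $X^{\bullet}$ is contravariant in $\Delta^{\bullet}$, the connecting homomorphism reads precisely as $\sum_{i=0}^{p+1}(-1)^i \delta^i_{\ast}$.

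The principal technical hurdle is identifying this connecting homomorphism with the cosimplicial coboundary at the level of spaces. Algebraically, for a cosimplicial abelian group, this is (dual to) the Dold--Kan correspondence; one must lift it to the space level using a cofibrant model of the standard cosimplicial simplex~$\Delta^{\bullet}$, so that the cellular boundary of $\Delta^{p+1}$ genuinely produces the alternating signs after mapping into $X^{\bullet}$. For small~$q$ (in particular $q \in \{0,1\}$) additional care is needed with basepoints and the possibly nonabelian structure, but the argument adapts in the standard manner. Since the entire proof is executed in detail in \cite[Sections~X.6--7]{BK72}, in practice I would cite that reference rather than reproduce it.
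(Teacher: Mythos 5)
Your proposal is correct and takes essentially the same route as the paper, which for this proposition simply writes ``See \cite[Chapter~X.7]{BK72}''; your sketch — identifying $L^{p}X^{\bullet}$ with $\Omega^{p}$ of the normalisation $X^{p}\cap\bigcap_{i=0}^{p-1}\ker(s^{i})$ and reading off $d^{1}$ as the composite of the fibre inclusion with the connecting homomorphism of the next fibration, with the alternating signs coming from the simplicial boundary — is precisely the argument carried out in \cite[Sections~X.6--7]{BK72}. Deferring the technical details (basepoints, low $q$, the space-level Dold--Kan comparison) to that reference is exactly what the paper does as well.
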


\begin{proof}
  See \cite[Chapter~X.7]{BK72}.
\end{proof}


\subsection{Homotopy groups of $\Conf_{n}(\RR^2 \times \udisc^1)$}\label{sec:htpgrconf}
From Section~\ref{sec:cosss} we see that we need to compute the homotopy groups of $\Embcos^{n}$ for $n \geq 0$, in order to compute the Bousfield-Kan spectral sequence associated to the cosimplicial model $\Embcos^{\bullet}$.
By Remark~\ref{rmk:emb&conf} we know that $\Embcos^{n}$ relates closely to the configuration spaces of $n$ points in $\RR^2 \times \udisc^1$.
Therefore, let us gather some information about the homotopy groups of configurations spaces in this section.
The main reference for this section is \cite{FN62} and \cite{FH01}.

\begin{definition}\label{def:config}
 Let $M$ be a smooth manifold (possibly with boundary). 
 Define the \emph{configuration space} $\Conf_n(M)$ of $n \geq 1$ points on $M$ as 
  \[
    \Conf_n(M) \coloneqq \{(x_1, \dots, x_n) \in (M \setminus \partial M)^n \mid x_i \neq x_j \textnormal{ for } i \neq j\}.
  \]
\end{definition}

Now we focus on $\Conf_n (\RR^2 \times \udisc^1)$ for $n \geq 0$.

\begin{convention}
  We define\footnote{We make this conventions because we will see in the next section that $\Embcos_{n} \simeq \Conf_n (\RR^2 \times \udisc^1) \times \left( \usphere^2 \right)^n$ for $n \geq 0$.} $\Conf_0(\RR^2 \times \udisc^1) \coloneqq \{(0,0,-1), (0,0,1)\} \subseteq \partial(\RR^2 \times \udisc^1)$. 
\end{convention}

\begin{situation}\label{not:fixpoints}
  Let us fix the following points of $\RR^2 \times \udisc^1$.
  Define $e \coloneqq (1,0,0) \in \RR^2 \times \udisc^1$, and $q_1 \coloneqq (0,0,0)$, and $q_k= q_1 + 4(k-1)e$ for $k \geq 1$
  Also define the set of points $Q_0 \coloneqq \emptyset$ and $Q_k \coloneqq \{q_1, q_2, \dots, q_k\}$.
\end{situation}

\begin{theorem}[Fadell--Neuwirth]\label{thm:crosssecconfig}
  For $n \geq 2$ and $n \geq k \geq 0$, the map 
  \begin{align*}
    \pr_{k,n} \colon \Conf_n(\RR^2 \times \udisc^1 \setminus Q_k) &\to \RR^2 \times \udisc^1 \setminus Q_k \\
    (x_1, x_2, \cdots, x_n) &\mapsto x_1
  \end{align*}
  is a fibre bundle whose fibre is homeomorphic to $\Conf_{n-1}(\RR^2 \times \udisc^1 \setminus Q_{k+1})$.
 For $k \geq 0$, the map $\pr_{k,n}$ admits a cross section\footnote{The case $k=0$ works since we are looking at Euclidean spaces.}.
\end{theorem}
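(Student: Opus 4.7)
The plan is to follow the classical Fadell--Neuwirth strategy, treating the three assertions (fibre identification, local triviality, existence of a section) in turn.

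First I would identify the fibre. Fix $x_1 \in \RR^2 \times \udisc^1 \setminus Q_k$. By definition,
\[
  \pr_{k,n}^{-1}(x_1) = \Conf_{n-1}\bigl(\RR^2 \times \udisc^1 \setminus (Q_k \cup \{x_1\})\bigr).
\]
To see that this is homeomorphic to $\Conf_{n-1}(\RR^2 \times \udisc^1 \setminus Q_{k+1})$, I would produce a self-homeomorphism $\Phi \colon \RR^2 \times \udisc^1 \to \RR^2 \times \udisc^1$ that pointwise fixes $Q_k$ (and is the identity outside a compact subset of the interior) and sends $x_1$ to $q_{k+1}$. Such a $\Phi$ can be obtained as the time-$1$ map of a compactly supported smooth vector field along any smooth arc from $x_1$ to $q_{k+1}$ that avoids $Q_k$ and the boundary; since $\RR^2 \times \udisc^1 \setminus (Q_k \cup \partial(\RR^2 \times \udisc^1))$ is path connected, such an arc always exists.

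Next I would establish local triviality over a neighbourhood of each $x_1$ in the base. Choose an open ball $V \subseteq \RR^2 \times \udisc^1 \setminus Q_k$ centred at $x_1$ whose closure lies in the interior and is disjoint from $Q_k$. By a standard isotopy-extension / bump-function argument, one can construct a continuous family of homeomorphisms $\{\Phi_y\}_{y \in V}$ of $\RR^2 \times \udisc^1$, each fixing $Q_k$ and the complement of a slightly larger ball, with $\Phi_{x_1} = \id$ and $\Phi_y(x_1) = y$; explicitly, one integrates a smooth time-dependent vector field obtained by multiplying the constant vector $y - x_1$ by a bump function supported in a neighbourhood of $\overline{V}$. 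The local trivialisation is then
\begin{align*}
  V \times \Conf_{n-1}\bigl(\RR^2 \times \udisc^1 \setminus (Q_k \cup \{x_1\})\bigr) &\xrightarrow{\;\sim\;} \pr_{k,n}^{-1}(V)\\
  \bigl(y,\,(x_2, \dots, x_n)\bigr) &\mapsto \bigl(y, \Phi_y(x_2), \dots, \Phi_y(x_n)\bigr).
\end{align*}
This step is the main technical obstacle, since everything else is essentially formal; the delicate point is only to keep the family $\Phi_y$ continuous in $y$ while fixing $Q_k$, which is why one uses a flow argument rather than an ad hoc homeomorphism.

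Finally I would produce the section using the non-compactness of $\RR^2$. Pick a continuous function $R \colon \RR^2 \times \udisc^1 \setminus Q_k \to \RR_{>0}$ with $R(x_1) > \lVert x_1 \rVert + \max_{1 \leq j \leq k}\lVert q_j \rVert$, for instance
\[
  R(x_1) \coloneqq 1 + \lVert x_1 \rVert + \max_{1 \leq j \leq k} \lVert q_j \rVert.
\]
Recalling $e = (1, 0, 0)$, set
\[
  \sigma(x_1) \coloneqq \bigl(x_1,\; R(x_1) e,\; (R(x_1)+1) e,\; \dots,\; (R(x_1) + n - 2) e\bigr).
\]
Each auxiliary point $(R(x_1) + j)e$ has third coordinate $0$, hence lies in the interior of $\RR^2 \times \udisc^1$; its $\RR^2$-norm strictly exceeds those of $x_1$ and of all $q_j$, so it avoids $x_1$ and $Q_k$; and the $n-1$ auxiliary points are pairwise distinct. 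Thus $\sigma$ defines a continuous section of $\pr_{k,n}$, completing the proof.
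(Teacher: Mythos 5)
Your proposal is correct and reproduces essentially the standard argument of the result the paper merely cites (the paper's ``proof'' is just a reference to \cite[Theorem~2]{FN62}): the fibre identification by an ambient compactly supported isotopy moving $x_1$ to $q_{k+1}$ while fixing $Q_k$, the local trivialisation by a continuous family of flows, and the ``points near infinity'' cross section are exactly the classical Fadell--Neuwirth ingredients, and the section argument correctly explains the paper's footnote that non-compactness of the Euclidean factor is what makes even the case $k=0$ work. The only caveat is cosmetic and concerns the statement rather than your argument: since $\Conf_n$ is defined using interior points only, the base of the bundle should be read as the interior of $\RR^2 \times \udisc^1$ minus $Q_k$, which your local triviality step implicitly does by requiring $\overline{V}$ to lie in the interior.
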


\begin{proof}
  See \cite[Theorem~2]{FN62}.
\end{proof}

Thus we can compute $\pi_{\ast}(\Conf_n(\RR^2 \times \udisc^1))$ inductively via the splitting long exact sequences for the fibre bundles $\pr_{k, n}$ for $0 \leq k \leq n$ and $n \geq 2$.
And we can conclude the following corollary.

\begin{corollary}[Fadell--Neuwirth]\label{cor:piconfig}
  For $n \geq 2$ and $i \geq 1$, we have
  \[
    \pi_{i}(\Conf_n(\RR^2 \times \udisc^1)) \cong \bigoplus_{k = 0}^{n-1} \pi_{i}(\RR^2 \times \udisc^1 \setminus Q_k) \cong \bigoplus_{k = 1}^{n-1} \pi_i(\vee_k \usphere^{2}).
  \]
  In particular, $\Conf_{n}(\RR^2 \times \udisc^1)$ is simply connected.
\end{corollary}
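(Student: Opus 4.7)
The plan is to iterate the Fadell--Neuwirth fibre bundle of Theorem~\ref{thm:crosssecconfig}, using that each such bundle admits a section to split the associated long exact sequence of homotopy groups.

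Concretely, the first step is to specialise Theorem~\ref{thm:crosssecconfig} to $k = 0$, obtaining the fibre bundle
\[
\Conf_{n-1}(\RR^2 \times \udisc^1 \setminus Q_1) \longrightarrow \Conf_n(\RR^2 \times \udisc^1) \xrightarrow{\pr_{0,n}} \RR^2 \times \udisc^1,
\]
which admits a section. The induced long exact sequence on homotopy groups therefore reduces, for each $i \geq 1$, to a split short exact sequence
\[
0 \to \pi_i\bigl(\Conf_{n-1}(\RR^2 \times \udisc^1 \setminus Q_1)\bigr) \to \pi_i\bigl(\Conf_n(\RR^2 \times \udisc^1)\bigr) \to \pi_i(\RR^2 \times \udisc^1) \to 0.
\]
For $i \geq 2$ the groups are abelian, so the splitting yields a direct sum decomposition; for $i = 1$ the base $\RR^2 \times \udisc^1$ is contractible, so the sequence gives a plain isomorphism. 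In either case one obtains
\[
\pi_i\bigl(\Conf_n(\RR^2 \times \udisc^1)\bigr) \cong \pi_i(\RR^2 \times \udisc^1) \oplus \pi_i\bigl(\Conf_{n-1}(\RR^2 \times \udisc^1 \setminus Q_1)\bigr).
\]

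The second step is to iterate: applying Theorem~\ref{thm:crosssecconfig} with $k = 1, 2, \ldots, n-2$ to the inner configuration space and using the section at each stage, an induction on $n$ produces
\[
\pi_i\bigl(\Conf_n(\RR^2 \times \udisc^1)\bigr) \cong \bigoplus_{k=0}^{n-1} \pi_i\bigl(\RR^2 \times \udisc^1 \setminus Q_k\bigr),
\]
the base case being $\Conf_1(\RR^2 \times \udisc^1 \setminus Q_{n-1}) = \RR^2 \times \udisc^1 \setminus Q_{n-1}$. Finally, $\RR^2 \times \udisc^1 \setminus Q_0 = \RR^2 \times \udisc^1$ is contractible and therefore contributes trivially, while for $k \geq 1$ the space $\RR^2 \times \udisc^1 \setminus Q_k$ deformation retracts onto a wedge of $k$ copies of $\usphere^2$ (one linking 2-sphere around each removed point), giving the second isomorphism
\[
\pi_i\bigl(\Conf_n(\RR^2 \times \udisc^1)\bigr) \cong \bigoplus_{k=1}^{n-1} \pi_i\bigl(\vee_k \usphere^2\bigr).
\]
Simple connectedness is then immediate from $\pi_1(\vee_k \usphere^2) = 0$.

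There is no real obstacle; the only care needed is handling $i = 1$ separately, where the direct sum notation is interpreted via the split short exact sequence (and where the target is in any case trivial because $\RR^2 \times \udisc^1 \setminus Q_k$ is simply connected for every $k$). The induction itself and the homotopy equivalence $\RR^2 \times \udisc^1 \setminus Q_k \simeq \vee_k \usphere^2$ are standard.
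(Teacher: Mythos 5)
Your argument is correct and is essentially the paper's own: the paper notes immediately before the corollary that one computes $\pi_{\ast}(\Conf_n(\RR^2 \times \udisc^1))$ inductively via the split long exact sequences of the Fadell--Neuwirth bundles $\pr_{k,n}$, and then simply cites \cite[Corollary~2.1]{FN62} for the result. You have just filled in that standard induction (including the correct separate treatment of $i=1$, where the simply connected bases make the semidirect product collapse), so there is nothing to add.
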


\begin{proof}
  See \cite[Corollary~2.1]{FN62}.
\end{proof}

Now we are going to introduce a set of generators for $\pi_2(\Conf_n(\RR^{2} \times \udisc^1))$, which we will use in the computations of Section~\ref{sec:hssknotcal}.

\begin{definition}\label{def:genxij}
  For $1 \leq i \neq j\leq n$, define the map $x_{ij}$ as the composition of the following two maps
  \begin{align*}
    \usphere^2 &\to \Conf_{n-j+1}(\RR^{2} \times \udisc^1 \setminus Q_{j-1}) \\
    x &\mapsto (q_i + x, q_{j+1}, \dots, q_{n}),
  \end{align*}
  and
  \begin{align*}
    \Conf_{n-j+1}(\RR^{2} \times \udisc^1 \setminus Q_{j-1}) &\hookrightarrow \Conf_{n}(\RR^{2} \times \udisc^1)  \\
    (x_1, \dots, x_{n-j+1}) &\mapsto (q_1, \dots, q_{j-1}, x_1, \dots, x_{n-j+1}).
  \end{align*}
\end{definition}

\begin{proposition}\label{prop:genxij}
The maps $x_{ij} \colon \usphere^2 \to \Conf_n(\RR^2 \times \udisc^1)$ for $1 \leq i < j \leq n$ generate the group $\pi_{2}(\Conf_{n}(\RR^{2} \times \udisc^1))$. 
\end{proposition}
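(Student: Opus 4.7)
The plan is the following. First I would iterate the Fadell--Neuwirth fibration of Theorem~\ref{thm:crosssecconfig}, applying $\pr_{k, n-k}$ successively for $k = 0, 1, \ldots, n-2$. Because each projection admits a cross-section, the induced long exact sequence in homotopy splits into short exact sequences on every $\pi_q$, which already yields the decomposition
\[
  \pi_2\bigl(\Conf_n(\RR^2 \times \udisc^1)\bigr) \;\cong\; \bigoplus_{k=1}^{n-1} \pi_2\bigl(\RR^2 \times \udisc^1 \setminus Q_k\bigr)
\]
recorded in Corollary~\ref{cor:piconfig}; the $k=0$ summand vanishes since $\RR^2 \times \udisc^1$ is contractible. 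What remains is to track how a preferred generating set of the right-hand side maps back to $\pi_2(\Conf_n(\RR^2 \times \udisc^1))$ under the splittings induced by the sections.

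For each $k \geq 1$ the space $\RR^2 \times \udisc^1 \setminus Q_k$ deformation retracts onto $\vee_k \usphere^2$, so by Hurewicz its $\pi_2$ is freely abelian with a canonical basis $\{\sigma_{i,k}\}_{1 \leq i \leq k}$, where $\sigma_{i,k} \colon \usphere^2 \to \RR^2 \times \udisc^1 \setminus Q_k$ is represented by a small linking $2$-sphere $x \mapsto q_i + x$ around the puncture $q_i$. Next I would compose $\sigma_{i,k}$ with the section $s_k \colon x \mapsto (x, q_{k+2}, \ldots, q_n)$ of $\pr_{k, n-k}$ and with the fibre inclusion $\Conf_{n-k}(\RR^2 \times \udisc^1 \setminus Q_k) \hookrightarrow \Conf_n(\RR^2 \times \udisc^1)$, $(x_1, \ldots, x_{n-k}) \mapsto (q_1, \ldots, q_k, x_1, \ldots, x_{n-k})$. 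The composite is the map $x \mapsto (q_1, \ldots, q_k, q_i + x, q_{k+2}, \ldots, q_n)$, which by Definition~\ref{def:genxij} is precisely $x_{i, k+1}$. Letting $k$ run through $\{1, \ldots, n-1\}$ and $i$ through $\{1, \ldots, k\}$, the indices $(i, k+1)$ enumerate exactly the pairs $1 \leq i < j \leq n$, so the claimed classes generate $\pi_2(\Conf_n(\RR^2 \times \udisc^1))$ (in fact they form a free basis).

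The real content lives in the first step; the rest is bookkeeping. The main obstacle I anticipate is exactly that bookkeeping: the section of each $\pr_{k, n-k}$ is not unique, so I need to fix one consistent choice throughout (e.g.\ the one sending $x$ to $(x, q_{k+2}, \ldots, q_n)$ near the basepoint configuration $(q_1, \ldots, q_n)$) and verify that, with these compatible sections and basepoints in the iterated tower, the generator of the $k$-th summand matches the precise formula of Definition~\ref{def:genxij} on the nose rather than up to a cosmetically different representative.
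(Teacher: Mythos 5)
Your proposal is correct and follows essentially the same route as the paper: both rest on the Fadell--Neuwirth splitting of Corollary~\ref{cor:piconfig}, the Hurewicz identification of $\pi_2(\RR^2 \times \udisc^1 \setminus Q_{j-1}) \cong \pi_2(\vee_{j-1}\usphere^2)$ via linking spheres, and the observation that these generators are exactly the $x_{ij}$ of Definition~\ref{def:genxij}. The only cosmetic difference is that you trace the generators through explicit sections of the iterated fibrations, whereas the paper identifies the wedge $S_{1j} \vee \cdots \vee S_{j-1,j}$ of the images $\im(x_{ij})$ directly with a deformation retract of $\RR^2 \times \udisc^1 \setminus Q_{j-1}$; your version is, if anything, slightly more careful about the bookkeeping.
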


\begin{proof}
  The image $S_{ij} \coloneqq \im(x_{ij})$ of $x_{ij}$ is homeomorphic to a 2-sphere. 
  For a fixed $j$ with $1 \leq j \leq n$, the space $S_{1j} \vee S_{2j} \vee \cdots \vee S_{j-1, j} \subseteq \RR^2 \times \udisc^1$ is homotopy equivalent to the space~$\RR^2 \times \udisc^1 \setminus Q_{j-1}$.
  Note that for every $i$ with $1 \leq i < j$, the map $x_{ij}$ is the positive generator of $\pi_2(S_{ij})$.
  Thus, by Hurewicz isomorphism theorem, the maps $x_{ij}$ with~$i = 1, \dots, j-1$ generates the group $\pi_2(S_{1j} \vee S_{2j} \vee \cdots \vee S_{j-1, j}) \cong \pi_2(\RR^2 \times \udisc^1 \setminus Q_{j-1})$.
  Now let $j$ varies and apply Corollary~\ref{cor:piconfig}, we have that the maps $x_{ij}$ for $1 \leq i < j \leq n$ generate the group $\pi_{2}(\Conf_{n}(\RR^{2} \times \udisc^1))$. 
\end{proof}

\begin{remark}
 The proof provides a decomposition of $\pi_{\ast}(\Conf_n(\RR^{r} \times \udisc^1))$ as 
  \[
    \pi_{\ast}(\Conf_{n}(\RR^{r} \times \udisc^1)) \cong \bigoplus_{j=2}^{n}\pi_{\ast}(S_{1j} \vee \cdots \vee S_{ij} \vee \cdots \vee S_{j-1,j}),
  \]
  where for $1 \leq i < j \leq n$ the positive generator of $S_{ij}$ is $x_{ij}$.
  Thus by the following theorem of Hilton about the homotopy groups of wedges of spheres, we reduce the computation of homotopy groups of $\Conf_{n}(\RR^{2} \times \udisc^1)$ to homotopy groups of spheres.
\end{remark}

\begin{definition}[{\cite{Hil55},\cite[Page~511--512]{Whi78}}]\label{def:bp}
  Let $T \coloneqq \usphere^{r_1 +1} \vee \usphere^{r_2 +2} \vee \cdots \vee \usphere^{r_k +1}$ and denote by $\iota_{i}$ the positive generator of $\pi_{r_i+1}\left(\usphere^{r_i +1}\right)$.
  Note that $\iota_i$ can be considered as an element of $\pi_{r_i+1}(T)$ via the canonical embedding $\usphere^{r_i+1} \hookrightarrow T$.%
  \footnote{We consider basic products eventually as homotopy classes, but to get a well-defined definition, one has to first define basic products as `formal' products, \cf \cite[Page~511--512]{Whi78}. 
  The index set $P$ in Theorem~\ref{thm:hiltontheorem} below is then the set of formal basic products, and this ensures a posteriori that we do not have to distinguish formal products and Whitehead products of homotopy classes.}
  \begin{enumerate}
    \item The \emph{basic products} of weight 1 are the elements $\iota_1, \iota_2, \cdots , \iota_k$. We order the set of basic products of weight $1$ by $\iota_1 < \iota_2 < \cdots < \iota_k$. 
      We define basic products of weight bigger than $1$ recursively. 
      A basic product of \emph{weight $\omega$} is a Whitehead product $[a, b]$, where $a$ and $b$ are both basic products of weights $\alpha < \omega$ and $\beta < \omega$ respectively such that
      \begin{enumerate}[label=\alph*)]
        \item $\alpha + \beta = \omega$ and $a < b$, and 
        \item if $b$ is defined as the Whitehead product $[c,d]$ of basic products $c$ and $d$, then we have $c \leq a$.
      \end{enumerate} 
      We declare every basic product of weight $\omega$ to be greater than any basic product of smaller weight.
      We order the set of basic products of weight $\omega$ lexicographically, \ie for two basic products $[a,b]$ and $[a',b']$ of weight $\omega$, we set $[a,b] < [a',b']$ if $a < a'$, or $a = a'$ and $b < b'$.  
    \item Thus a basic product $p$ of weight $\omega$ is a suitably bracketed word in the symbols $\iota_i$ for $i = 1, \dots, k$. 
      Assume $\iota_i$ appears $w_i$ times in $p$.
      We define the \emph{height} $h(p)$ of $p$ as $\sum_{i = 1}^{k} r_i w_i$.
  \end{enumerate}
\end{definition}

\begin{theorem}[Hilton]\label{thm:hiltontheorem}
Using the notation of Definition~\ref{def:bp}, let $P$ be the set of (formal) basic products of $\iota_1, \dots, \iota_k$.
We have
\[
  \pi_{\ast}(T) \cong \bigoplus_{p \in P} \pi_{\ast}(\usphere^{h(p)+1}).
\]
where the direct summand $\pi_{\ast}(\usphere^{h(p)+1})$ is embedded in $\pi_{\ast}(T)$ by composition with the basic product $p \in \pi_{h(p)+1}(T)$.
\end{theorem}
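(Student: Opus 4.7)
The plan is to deduce the theorem from a splitting at the loop space level, namely a weak homotopy equivalence
\[
\phi \colon \prod_{p \in P} \Omega \usphere^{h(p)+1} \xrightarrow{\sim} \Omega T,
\]
from which the desired decomposition of $\pi_{\ast}(T) \cong \pi_{\ast-1}(\Omega T)$ follows immediately, since homotopy groups commute with products.

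To build $\phi$, I would use that each basic product $p$ is, by Definition~\ref{def:bp}, an iterated Whitehead bracket of the inclusions $\iota_1, \dots, \iota_k$, and so defines an element of $\pi_{h(p)+1}(T)$. Adjointing yields a based map $\usphere^{h(p)} \to \Omega T$; by the universal property of the James construction $J(\usphere^{h(p)}) \simeq \Omega \usphere^{h(p)+1}$, this extends uniquely up to homotopy to an $H$-map $\Omega \usphere^{h(p)+1} \to \Omega T$. Multiplying these $H$-maps together in any fixed order using the loop multiplication on $\Omega T$ produces $\phi$.

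To show $\phi$ is a weak equivalence, note that both sides are simply connected $H$-spaces and that only finitely many basic products contribute below any fixed degree, so both are of finite type in each dimension. It therefore suffices to check that $\phi$ induces an isomorphism on integral homology. By the Bott--Samelson theorem, $\Ho_{\ast}(\Omega T; \ZZ)$ is the tensor algebra on $\tilde{\Ho}_{\ast}(\usphere^{r_1} \vee \cdots \vee \usphere^{r_k}; \ZZ)$, i.e.\ the free graded associative algebra on generators $u_1, \dots, u_k$ of degrees $r_1, \dots, r_k$. Under this identification, the Samelson product on $\Ho_{\ast}(\Omega T)$ corresponds to the Whitehead product on $\pi_{\ast+1}(T)$, so the homology classes associated to the basic products realise the iterated Lie brackets of the $u_i$ inside the tensor algebra. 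By Milnor--Moore, that tensor algebra is the universal enveloping algebra of the free graded Lie algebra $L(u_1, \dots, u_k)$, and Poincaré--Birkhoff--Witt exhibits it as a graded polynomial algebra on any linear basis of $L$.

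The main obstacle is therefore purely algebraic: one must verify that the combinatorial conditions of Definition~\ref{def:bp} genuinely cut out a Hall basis of $L(u_1, \dots, u_k)$. This is a classical fact for ordinary free Lie algebras due to Hall, and carries over to the graded setting with appropriate sign conventions. Once granted, the image in $\Ho_{\ast}(\Omega T)$ of each factor $\Omega \usphere^{h(p)+1}$ provides a free polynomial generator corresponding to the basis element $p \in L$, and the map $\phi_{\ast}$ is identified with the canonical PBW isomorphism. Hence $\phi$ is a homology, and so a weak homotopy, equivalence.
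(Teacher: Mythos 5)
The paper does not actually prove this statement: its ``proof'' is a citation to \cite[Theorem~8.1]{Whi78}, so there is no argument in the text to compare yours against line by line. Your sketch is essentially the classical proof that lives in that reference (and in Hilton's original paper): construct $\phi$ from the adjoints of the basic products via the James construction, and verify it is a homology isomorphism by identifying $\Ho_{\ast}(\Omega T;\ZZ)$ with the tensor algebra via Bott--Samelson and comparing with the homology of the product of loop spaces of spheres. So the route is the standard one; the paper simply outsources it.

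Two points in your outline need more care before it is a complete proof. First, $P$ is infinite as soon as $k \geq 2$, so you cannot literally ``multiply the $H$-maps together'' on the full product $\prod_{p\in P}\Omega\usphere^{h(p)+1}$: a point of the full product may have infinitely many non-basepoint coordinates. One defines $\phi$ on the weak product (the colimit of the finite sub-products, ordered by weight) and then uses that the heights $h(p)$ tend to infinity to see that the weak product includes into the full product by a weak homotopy equivalence; this is also what makes the degreewise direct sum in the statement agree with the product of homotopy groups. Second, the step ``Poincar\'e--Birkhoff--Witt exhibits the tensor algebra as a graded polynomial algebra on a basis of $L$'' is not literally true integrally in the presence of odd-degree generators, and the matching with the left-hand side is subtler than stated: for an even sphere one has $\Ho_{\ast}(\Omega\usphere^{2m};\ZZ)\cong\Lambda(x_{2m-1})\otimes\ZZ[y_{4m-2}]$, which is not a polynomial algebra on one generator, although its Poincar\'e series happens to equal $(1-t^{2m-1})^{-1}$. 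The clean way to finish is therefore the classical one: show $\phi_{\ast}$ is surjective onto the tensor algebra (the Hall/basic products generate, which is the graded Hall-basis fact you invoke), and then conclude injectivity by comparing Poincar\'e series of finitely generated free abelian groups in each degree, using the Witt formula. With those two repairs your argument is correct and is, for practical purposes, the proof of the theorem the paper cites.
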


\begin{proof}
  See \cite[Theorem~8.1]{Whi78}.
\end{proof}

The Whitehead products of $x_{ij}$ for $1 \leq i, j \leq n$ and $i \neq j$ of $\pi_2(\Conf_n(\RR^2 \times \udisc^1))$ satisfy some relations, which we will use in the computation in the Section~\ref{sec:hssknotcal}. 

\begin{proposition}[Hilton, Nakaoka--Toda, Massey--Uehara]\label{prop:whiteheadprod}
  Let $X$ be a topological space.
  Then the Whitehead product $[\blank, \blank]$ on $\pi_{\ast}(X)$ is bilinear, antisymmetric and satisfies the Jacobi identity, \ie for $\alpha \in \pi_{a+1}(X)$, $\beta \in \pi_{b+1}(X)$ and $\gamma \in \pi_{c+1}(X)$ we have
  \begin{enumerate}
    \item $[\alpha, \beta + \gamma] = [\alpha, \beta] + [\alpha, \gamma]$ and $[\alpha + \beta, \gamma] = [\alpha, \gamma] + [\beta, \gamma]$,
    \item $[\alpha, \beta] = (-1)^{(a+1)(b+1)}[\beta, \alpha]$, and 
    \item $(-1)^{c(a+1)}[\alpha, [\beta, \gamma]] + (-1)^{a(b+1)}[\beta, [\gamma, \alpha]] + (-1)^{b(c+1)}[\gamma, [\alpha, \beta]] = 0$.
  \end{enumerate}
\end{proposition}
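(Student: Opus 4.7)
The plan is to reduce everything to the canonical Whitehead product $w_{a,b} \coloneqq [\iota_{a+1}, \iota_{b+1}] \in \pi_{a+b+1}(\usphere^{a+1} \vee \usphere^{b+1})$, which is defined as the attaching map of the top cell of $\usphere^{a+1} \times \usphere^{b+1}$. For any $\alpha \colon \usphere^{a+1} \to X$ and $\beta \colon \usphere^{b+1} \to X$, one then has $[\alpha, \beta] = (\alpha \vee \beta) \circ w_{a,b}$. This formulation reduces each of \rom{1}--\rom{3} to a statement about universal maps between (wedges and smash products of) spheres, which can be verified in the homotopy category.

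For bilinearity (\rom{1}), I would exploit the co-$H$-space structure on the sphere $\usphere^{a+1}$: the comultiplication $\mu \colon \usphere^{a+1} \to \usphere^{a+1} \vee \usphere^{a+1}$ induces the addition on $\pi_{a+1}(X)$, and composing with $(\alpha \vee \alpha')$ gives $\alpha + \alpha'$. A straightforward diagram chase, together with the naturality of $w_{a,b}$ in the wedge factors, shows $[\alpha + \alpha', \beta] = [\alpha, \beta] + [\alpha', \beta]$; the right-hand bilinearity is symmetric. Graded commutativity (\rom{2}) is obtained by observing that the swap $\tau \colon \usphere^{a+1} \times \usphere^{b+1} \to \usphere^{b+1} \times \usphere^{a+1}$ sends the attaching map of the top cell to the attaching map of the top cell of the swapped product. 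After identifying the wedges and passing to the suspension, $\tau$ acts on the smash product $\usphere^{a+1} \wedge \usphere^{b+1} \simeq \usphere^{a+b+2}$ by a map of degree $(-1)^{(a+1)(b+1)}$ (a standard computation by permuting coordinates), which yields the sign $(-1)^{(a+1)(b+1)}$ on the level of homotopy classes.

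The Jacobi identity (\rom{3}) is the genuinely hard part. The classical approach, due to Hilton and refined by Nakaoka--Toda and Massey--Uehara, is to analyse the triple product $\usphere^{a+1} \times \usphere^{b+1} \times \usphere^{c+1}$, whose $(a+b+c+2)$-skeleton is a wedge of the three ``double'' products $\usphere^{a+1} \times \usphere^{b+1}$, $\usphere^{b+1} \times \usphere^{c+1}$, $\usphere^{a+1} \times \usphere^{c+1}$ (glued along $\usphere^{a+1} \vee \usphere^{b+1} \vee \usphere^{c+1}$). The attaching map of the top $(a+b+c+2)$-cell produces a relation among the three iterated Whitehead products $[\iota_a,[\iota_b,\iota_c]]$, $[\iota_b,[\iota_c,\iota_a]]$, $[\iota_c,[\iota_a,\iota_b]]$ in $\pi_{a+b+c+1}$ of the triple wedge; tracking the orientation signs of the six subcubes (which produces the signs $(-1)^{c(a+1)}$, $(-1)^{a(b+1)}$, $(-1)^{b(c+1)}$) gives the graded Jacobi identity. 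Applying $(\alpha \vee \beta \vee \gamma)_\ast$ then gives the claimed formula.

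The main obstacle is the bookkeeping of signs in the triple-product argument for the Jacobi identity; the purely formal steps are routine, but the orientation conventions must be pinned down carefully and matched with the conventions used in the definition of the Whitehead bracket and of suspension. In a write-up I would either carry this out explicitly for the universal case in $\pi_{a+b+c+1}(\usphere^{a+1} \vee \usphere^{b+1} \vee \usphere^{c+1})$ or simply cite \cite{Whi78} where all three properties are proved with the conventions matching ours.
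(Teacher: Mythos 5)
The paper does not actually prove this proposition: its entire ``proof'' is the citation \cite{Hil61}, \cite{MU57} and \cite{NT54}, so there is no in-paper argument to match yours against. What you have written is a faithful outline of the classical arguments contained in those references (and in \cite[Chapter~X]{Whi78}): reduction to the universal Whitehead product $w_{a,b} = [\iota_{a+1},\iota_{b+1}]$ via $[\alpha,\beta] = (\alpha\vee\beta)\circ w_{a,b}$, bilinearity from the co-$H$-structure of spheres and naturality, graded commutativity from the degree of the coordinate swap on $\usphere^{a+1}\wedge\usphere^{b+1}$, and the Jacobi identity from the attaching map of the top cell of the triple product relative to the fat wedge. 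Two caveats. First, be slightly more careful with the phrase ``is a wedge of the three double products'': the $(a+b+c+2)$-skeleton of $\usphere^{a+1}\times\usphere^{b+1}\times\usphere^{c+1}$ is the fat wedge, a union of the three sub-products whose pairwise intersections are single sphere factors, not just the triple wedge of spheres; the Hilton--Milnor analysis of this union is where the real content lies. Second, as you yourself flag, the sign bookkeeping in \rom{3} is the entire difficulty of the theorem, and your sketch defers it; carrying it out from scratch is a nontrivial several-page computation, so in practice your final sentence (cite \cite{Whi78} with matching conventions) collapses your proof to exactly what the paper does. In short: your route is the standard one, it is correct as an outline, and it provides strictly more explanation than the paper, but it is not a self-contained proof unless the orientation/sign analysis for the Jacobi identity is actually executed.
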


\begin{proof}
  See \cite{Hil61}, \cite{MU57} and \cite{NT54}. 
\end{proof}

\begin{proposition}\label{prop:confrelation}
  Let $1 \leq i, j \leq n$ and $i \neq j$.
  The element $x_{ij} \in \pi_2(\Conf_n(\RR^2 \times \udisc^1))$ satisfy the following relations:
  \begin{enumerate}
    \item $x_{ij} = -x_{ji}$,
    \item $[x_{ij}, x_{jk}] = [x_{ji}, x_{ik}] = [x_{ik}, x_{kj}]$, if $n \geq 3$;
    \item $[x_{ij}, x_{kl}] = 0$, if $\{i,j\} \cap \{k, l\} = \emptyset$ and $n \geq 4$.
  \end{enumerate} 
\end{proposition}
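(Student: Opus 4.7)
The plan is to prove the three relations by geometric arguments tailored to each case. Parts (i) and (iii) admit direct constructions, while part (ii), the substantive relation, reduces to an ``enclosing sphere'' argument in $\Conf_3(\RR^2 \times \udisc^1)$.

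For (i), I would represent both $x_{ij}$ and $x_{ji}$ by $\usphere^2$-families supported in a small ball containing $q_i$ and $q_j$, with all other coordinates held fixed at their $q_l$. This exhibits both classes as images of maps $\usphere^2 \to \Conf_2(\RR^2 \times \udisc^1)$. The target is homotopy equivalent to $\usphere^2$ via the unit-direction map $(a,b) \mapsto (b-a)/\lVert b-a \rVert$, under which the two chosen representatives become the identity and the antipodal map, respectively, because they differ by exchanging the roles of the two marked points. Since the antipodal map on $\usphere^2$ has degree $-1$, we conclude $x_{ij} = -x_{ji}$.

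For (iii), since $\{i,j\} \cap \{k,l\} = \emptyset$, choose disjoint small balls $B_1 \ni q_i$ and $B_2 \ni q_k$ avoiding all remaining $q_m$. The loop $x_{ij}$ can be realised so that the only moving coordinate stays inside $B_1$, and $x_{kl}$ so that its moving coordinate stays inside $B_2$. The pair $\usphere^2 \vee \usphere^2 \to \Conf_n(\RR^2 \times \udisc^1)$ then extends to $\usphere^2 \times \usphere^2 \to \Conf_n(\RR^2 \times \udisc^1)$ by evaluating both sphere parameters simultaneously (the disjoint supports guarantee no collisions), and the Whitehead product of the two summand inclusions $\usphere^2 \hookrightarrow \usphere^2 \times \usphere^2$ is zero; hence $[x_{ij}, x_{kl}] = 0$.

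Relation (ii) is the main content. First I would reduce to $n = 3$ by fixing the coordinates $q_l$ for all $l \notin \{i,j,k\}$; the resulting inclusion of configuration spaces sends the local sphere generators to $x_{ij}$, $x_{ik}$, $x_{jk}$. The geometric key is the following: in $\RR^2 \times \udisc^1$ with two fixed punctures $q_a, q_b$, a loop in which a third coordinate $c$ traverses a large $\usphere^2$ of radius $R$ enclosing both punctures represents the class $x_{ac} + x_{bc}$ (up to a fixed sign) in $\pi_2(\RR^2 \times \udisc^1 \setminus \{q_a, q_b\}) \cong \pi_2(\usphere^2 \vee \usphere^2)$, by Hurewicz and the standard identification of a large enclosing sphere with the sum of two small enclosing spheres. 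Taking $R \gg 0$, the formula $(s,t) \mapsto (\text{large-sphere position}(s),\, \text{small-sphere positions}(t))$ defines a well-defined map $\usphere^2 \times \usphere^2 \to \Conf_3$ (no collisions, by the scale separation), which shows that the Whitehead product of this large-sphere loop with any loop supported near the other two punctures vanishes. Applying this construction twice, with the moving coordinate on the large sphere being $k$ (enclosing $\{q_i, q_j\}$) and then $\min(i,j)$ (enclosing the other two of $\{q_i, q_j, q_k\}$), one obtains, after substituting via (i), the two identities $[x_{ij}, x_{ik} + x_{jk}] = 0$ and $[x_{ij} + x_{ik}, x_{jk}] = 0$. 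Combined with bilinearity and the symmetry of the Whitehead product on $\pi_2 \times \pi_2$ coming from Proposition~\ref{prop:whiteheadprod}, these two identities are equivalent to the two equalities claimed in (ii). The main obstacle is to render the ``pushing to infinity'' map $\usphere^2 \times \usphere^2 \to \Conf_3$ rigorously and to fix the orientations carefully when identifying a large enclosing sphere with a sum of two small enclosing spheres.
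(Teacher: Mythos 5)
Your argument is correct, and it fills in what the paper leaves entirely to a citation: the paper's ``proof'' of this proposition is just a pointer to the proof of \cite[Theorem 3.1]{FH01}, and what you have written is precisely a worked-out version of that standard geometric argument (degree $-1$ of the antipode for (i), extension of a wedge to a product $\usphere^2 \times \usphere^2 \to \Conf_n$ to kill Whitehead products of disjointly supported classes for (iii), and the enclosing-sphere relation $[x_{ij}, x_{ik}+x_{jk}] = 0$ together with its companion $[x_{ij}+x_{ik}, x_{jk}] = 0$ for (ii), which combine with (i), bilinearity and the symmetry $[\alpha,\beta]=[\beta,\alpha]$ on $\pi_2$ to give exactly the two stated equalities). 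Two small points to tidy up: in (i), for non-adjacent $i,j$ there is no ball containing $q_i$ and $q_j$ but avoiding the intermediate base points $q_l$, so you should instead take a contractible tubular neighbourhood of an arc from $q_i$ to $q_j$ missing the other $q_l$ (or first permute the configuration); and in (ii) you should say explicitly that the two classes being multiplied are carried by \emph{different} moving coordinates (the $k$-th point on the large sphere versus the $j$-th point on the small one), which is what makes the product map collision-free. Neither affects the substance.
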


\begin{proof}
  See the proof of \cite[Section II.3, Theorem 3.1]{FH01}. 
\end{proof}


\subsection{A homotopy spectral sequence for the Taylor tower of $\Emb(\blank)$}\label{sec:hssknotcal}
In this section we perform some computation of the integral homotopy Bousfield--Kan spectral sequence of cosimplicial space $\Embcos^{\bullet}$ (Definition \ref{def:cosmodel}) associated to the embedding functor $\Emb(\blank)$ (Notation~\ref{not:embfun}).
Recall that this spectral sequence aims at analysing the homotopy limit of the tower of fibrations
\[
\cdots \to \pT_n \Emb(\I) \to \pT_{n-1} \Emb(\I) \to \cdots \to \pT_0 \Emb(\I),
\]
according Corollary~\ref{cor:goodwilliecosprop}.

By Corollary~\ref{cor:goodwilliecosprop} we have the weakly homotopy equivalence
\begin{align}\label{eq:embcos&conf}
  \Embcos^{n} \simeq \Emb(V) \simeq \Conf_n(\RR^2 \times \udisc^1) \times \left(\usphere^2\right)^n,
\end{align}
for any $V \in \fbOpen{\I}^{\op}$ such that $\pi_{0}(\I \setminus V) \cong [n]$.
For the computation of the homotopy spectral sequence associated to $\Embcos^{\bullet}$ we need to compute the induced maps on homotopy groups of the coface and codegeneracy maps.
From (\ref{eq:embcos&conf}) we have 
\[
\pi_{\ast}(\Embcos^{n}) \cong \pi_{\ast}(\Conf_n(\RR^2 \times \udisc^1)) \times (\pi_{\ast}(\usphere^2))^n.
\]
By abuse of notation, we consider $x_{ij}$ (Proposition~\ref{prop:genxij}), for $1 \leq i < j \leq n$, as elements of $\pi_{\ast}(\Embcos^{n})$ under the natural inclusion.

Let $l \in \NN$ and $0 \leq l \leq n$.
Recall the notation from Theorem~\ref{thm:goodwilliecos}. 
Let $V_{n+1}$ be an open subset of $\I$ with $\partial \I \subseteq V_{n+1}$ such that $\kappa(V_{n+1}) = [n+1]$.
We obtain an open subset $V_{n} \subseteq V_{n+1}$ by removing the $(l+2)$-th subinterval of $V_{n+1} \setminus \partial \I$.
Then the codegeneracy map $s^{l}$ for $\Embcos^{\bullet}$ is the induced restriction map $\Emb(V_{n+1}) \to \Emb(V_{n})$, \ie forgetting the embedding of the $(l+2)$-th interval.
With respect to the homotopy equivalence \ref{eq:embcos&conf}, we can write $s^{l}$ with $0 \leq l \leq n$ concretely as
\begin{align*}
  s^l \colon \Conf_{n+1}(\RR^2 \times \udisc^1) \times \left(\usphere^2\right)^{n+1} &\to \Conf_{n}(\RR^2 \times \udisc^1) \times \left(\usphere^2\right)^{n}\\
  (x_1, \dots, x_{n+1}) \times (v_1, \dots, v_{n+1}) &\mapsto (x_1, \dots, \widehat{x_{l+1}}, \dots, x_{n+1}) \times (v_1, \dots, \widehat{v_{l+1}}, \dots, v_{n+1}),
\end{align*}
where $\widehat{(\blank)}$ denotes taking out the elements.

\begin{convention}\label{conv:config-tangent}
  Denote by $s_{\ast}^l(c) \colon \pi_{r}(\Conf_{n+1}(\RR^2 \times \udisc^1)) \to \pi_r(\Conf_{n}(\RR^2 \times \udisc^1))$ the restriction of the map $s_{\ast}^{l}$ to the $\pi_{r}(\Conf_{n+1}(\RR^2 \times \udisc^1))$ component.
  Denote by $s_{\ast}^l(t)$ the restriction $\left(\pi_{r}(\usphere^2)\right)^{n+1} \to \left(\pi_{r}(\usphere^2)\right)^{n}$ of the map $s_{\ast}^{l}$ on the $\left(\pi_{r}(\usphere^2)\right)^{n+1}$ component.
\end{convention}

\begin{proposition}\label{prop:degeneracy-map}
  Let $i, j, l, n \in \NN$ and $1 \leq i < j \leq n+1$ and $0 \leq l \leq n$ and $n \geq 2$.
  \begin{enumerate}
    \item We have
      \[
        s^l_{\ast}\left(x_{ij}\right) =
        \begin{cases}
          x_{i-1, j-1} &\text{if } l < i-1\\
          x_{i, j-1} &\text{if } i-1 < l < j-1 \\
          x_{i, j} &\text{if } l > j-1 \\
          0 &\text{otherwise.}
        \end{cases}
      \]
    \item Denote by $Z$ the set of basic products of the elements $x_{i,j}$ containing $x_{u,l+1}$ or~$x_{l+1, v}$ for $1 \leq u \leq l$ and $l+2 \leq v \leq n+1$.
      Under the isomorphism in Theorem~\ref{thm:hiltontheorem}, the kernel of the map $s_{\ast}^l(c)$ is isomorphic to $\bigoplus_{p \in Z} \pi_{r}(\usphere^{h(p)}+1)$, for $r \geq 2$.  
    \item For $r \geq 2$ , the map $s_{\ast}^l(t)$ is the canonical projection where forgetting the $l$-th component.
      Thus the kernel of $s_{\ast}^{l}(t)$ is isomorphic to $\left(0\right)^{l-1} \times \pi_{r}(\usphere^2) \times \left(0\right)^{n-l}$.
  \end{enumerate}
\end{proposition}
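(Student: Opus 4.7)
The plan is to reduce all three parts to the explicit description of $s^l$ displayed just before the proposition, tracking the coordinates of concrete sphere representatives. The only extra inputs needed are the decomposition of $\pi_{\ast}(\Conf_{n+1}(\RR^2 \times \udisc^1))$ extracted from the proof of Proposition~\ref{prop:genxij}, Hilton's Theorem~\ref{thm:hiltontheorem}, and bilinearity of the Whitehead product (Proposition~\ref{prop:whiteheadprod}).

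For part (i), I would precompose the representative $\usphere^2 \to \Conf_{n+1}(\RR^2 \times \udisc^1)$ from Definition~\ref{def:genxij}, which sends $x$ to the tuple whose $j$-th entry is the moving point $q_i + x$ and whose other entries are fixed elements of $\{q_1, \dots, q_{n+1}\}$, with the codegeneracy $s^l$ that forgets the $(l+1)$-th entry. A case split on the relative position of $l+1$ with respect to $i$ and $j$ disposes of the three ``relabelling'' subcases $l+1 < i$, $i < l+1 < j$, $l+1 > j$ by direct inspection. The subcase $l+1 = j$ deletes the unique moving coordinate, so the composite is constant and represents $0$. In the subcase $l+1 = i$ the fixed point $q_i$ is deleted from the target configuration while the varying point $q_i + x$ remains; now $q_i$ is an interior point of $\RR^2 \times \udisc^1 \setminus \{q_1, \dots, \widehat{q_i}, \dots, q_{n+1}\}$, so a small contractible neighbourhood of $q_i$ inside this complement provides an explicit null-homotopy disk, and the class is again $0$.

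For part (ii), I would use the Fadell--Neuwirth splitting
\[
\pi_r\bigl(\Conf_{n+1}(\RR^2 \times \udisc^1)\bigr) \cong \bigoplus_{j=2}^{n+1} \pi_r\Bigl(\bigvee_{1 \leq i < j} S_{ij}\Bigr)
\]
from the proof of Proposition~\ref{prop:genxij} and expand each summand via Hilton's Theorem~\ref{thm:hiltontheorem} as a direct sum indexed by basic products in $x_{1j}, \dots, x_{j-1, j}$. Part (i) identifies the killed generators as $x_{u, l+1}$ for $1 \leq u \leq l$ (all of which lie in the $j = l+1$ summand, which is therefore killed entirely) and $x_{l+1, v}$ for $l+2 \leq v \leq n+1$ (one generator in each summand with $j = v > l+1$). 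By bilinearity of the Whitehead product, a basic product vanishes under $s^l_{\ast}(c)$ exactly when one of its factors does. Conversely, a basic product avoiding the killed generators is sent to a basic product in the $(j-1)$-th summand of $\pi_r(\Conf_n(\RR^2 \times \udisc^1))$, because the surviving factors are relabelled order-preservingly, so no cancellation occurs. Summing over $j$ identifies the kernel with the direct sum over $Z$. Part (iii) is immediate from the explicit formula: on the $(\usphere^2)^{n+1}$ factor $s^l$ is the projection that forgets the $(l+1)$-th coordinate, so on $\pi_r$ it becomes the corresponding coordinate projection whose kernel is the $(l+1)$-th summand.

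I expect the main obstacle to be the subcase $l+1 = i$ of part (i); the other subcases are direct bookkeeping, and parts (ii) and (iii) follow essentially formally once the generators are understood. The subtle point is to verify that the null-homotopy contracting $\{q_i + x : x \in \usphere^2\}$ across $q_i$ remains inside the configuration space after the codegeneracy, which works precisely because $q_i$ is no longer among the remaining fixed points.
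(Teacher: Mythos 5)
Your proposal is correct and follows essentially the same route as the paper: parts (i) and (iii) by direct inspection of the explicit coordinate formula for $s^l$ (the paper leaves the $l+1=i$ null-homotopy implicit, which you rightly flag and supply), and part (ii) by combining the vanishing of the generators $x_{u,l+1}$, $x_{l+1,v}$ with Hilton's theorem and the observation that surviving basic products are carried, order- and height-preservingly, to basic products downstairs, so no cancellation occurs. The paper phrases this last step as factoring $s^l_\ast(c)$ through the quotient by $\bigoplus_{p\in Z}\pi_r(\usphere^{h(p)+1})$ and checking the induced map is injective on the remaining basis, but the content is identical.
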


\begin{proof}
  \rom{1} and \rom{3} follows from the description of $s^l$ right above the proposition.
  
  For the proof of \rom{2}, let us abbreviate $s_{\ast}^{l}(c)$ by $s_{\ast}^{l}$ in this part of the proof.
  Note that for~$n \geq 2$, we have $s_{\ast}^l (x_{uv}) = 0$ if and only if $u = l+1 \text{ or } v = l+1$. 
  Therefore, for $n \geq 2$ and $z \in Z$, we have $s_{\ast}^{l}(z) = 0$ by the naturality of the Whitehead product.
  Thus $s_{\ast}^{l}$ factors through $\quot{\pi_r (\Conf_n(\RR^2 \times \udisc^1))}{{\bigoplus_{p \in Z} \pi_{r}(\usphere^{h_(p)}+1)}}$
  \[
  \begin{tikzcd}[row sep = huge]
    \pi_{r}(\Conf_{n+1}(\RR^2 \times \udisc^1)) \arrow[r, "s_{\ast}^l"] \arrow[d, "p"'] & \pi_{r}(\Conf_{n}(\RR^2 \times \udisc^1)) \\
    \quot{\pi_r (\Conf_n(\RR^2 \times \udisc^1))}{{\oplus_{p \in Z} \pi_{r}(\usphere^{h(p)}+1)}} \arrow[ru, "\bar{s}_{\ast}^l"', dashed] & 
  \end{tikzcd}
  \]
  
  By inspecting the value of $s_{\ast}^l$ on $x_{ij}$, we conclude that for two basic products $w_1 \leq w_2$ with $s_{\ast}^l(w_k) \neq 0$ for $k \in \{1, 2\}$, we have $s_{\ast}^l(w_1) \leq s_{\ast}^l(w_2)$. 
  Also the heights of $w_k$ and $s^{l}_{\ast}(w_k)$ are the same. 
  
  So $\bar{s}_{\ast}^l$ sends a basis of $\quot{\pi_r (\Conf_n(\RR^2 \times \udisc^1))}{{\sum_{p \in Z} \pi_{r}(\usphere^{h(p)+1})}}$ (\ie basic products that are not in $Z$) injectively to a basis of $\pi_{r}(\Conf_{n}(\RR^2 \times \udisc^1))$. 
  Thus $\bar{s}_{\ast}^l$ is injective, which implies that the kernel of $s_{\ast}^l$ is isomorphic to $\bigoplus_{p \in Z} \pi_{r}(\usphere^{h(p)+1})$, via the isomorphism from Theorem~\ref{thm:hiltontheorem}.
\end{proof}

Similar analysis of the definition of the coface maps tells us that these maps $\delta^{l}$ of $\Embcos^{\bullet}$ corresponds to ``breaking'' the embeddings of the $(l+1)$-th interval into the embedding of two subintervals.
 Therefore, one representative for the map $\delta^l$ with $0 < l < n+1$ is the following: 
\begin{align*}
  \Conf_n(\RR^2 \times \udisc^1) \times \left(\usphere^2\right)^{n} &\to \Conf_{n+1}(\RR^2 \times \udisc^1) \times \left(\usphere^2\right)^{n+1} \\
  (x_1, \dots, x_n) \times (v_1, \dots, v_n) &\mapsto \\
  & (x_1, \dots, x_l, x_{l}+\epsilon v_l, x_{l+1}, \dots, x_n) \times (v_1, \dots, v_l, v_l, v_{l+1}, \dots, v_n)
\end{align*}
where the scalar $\epsilon \in \RR$ is so chosen that $(x_1, \dots, x_l, x_{l}+\epsilon v_l, x_{l+1}, \dots, x_n)$ is a well-defined point in $\Conf_{n+1}(\RR^2 \times \udisc^1)$.
For $l = 0$ and $l = n+1$, we have
\begin{align*}
  \delta^0\left((x_1, \dots, x_n) \times (v_1, \dots, v_n)\right) &= (x_{-1}+\epsilon e, x_1, \dots, x_n) \times (e, v_1, \dots, v_n) \\
   \delta^{n+1}\left((x_1, \dots, x_n) \times (v_1, \dots, v_n)\right) &= (x_1, \dots, x_n, x_{+1}+\epsilon' e,) \times (v_1, \dots, v_n, e),
\end{align*}
where $x_{-1} = (0, 0, -1)$ and $x_{+1} = (0, 0, 1)$ and $e = (0, 0, 1)$.

By explicit calculation we obtain the following  
  
\begin{proposition}\label{prop:compmaps}
Let $i, j, l, n \in \NN$ such that $n \geq 2$ and $1 \leq i < j \leq n$ and $0 \leq l \leq n+1$. 
\begin{enumerate}
  \item For $n \in \NN$ and $n \geq 2$, we have 
      \[
        \delta^l_{\ast}\left(x_{ij}\right) =
        \begin{cases}
          x_{i+1, j+1} &\text{if } l < i \\
          x_{i, j+1} + x_{i+1, j+1} &\text{if } l = i \\
          x_{i, j+1} &\text{if } i < l < j \\
          x_{i, j}+x_{i, j+1}  &\text{if } l = j \\
          x_{ij} &\text{otherwise.}  
        \end{cases}
      \]
  \item Denote by $y_k$ a generator for the $k$-th component $\pi_{2}(\usphere^{2})$ of $(\pi_{2}(\usphere^2))^n$.
    We have that 
      \[
      \delta_{\ast}^l(y_k) = 
      \begin{cases}
        y_{k+1} &\text{if } l < k \\
        x_{k, k+1} + y_k + y_{k+1} &\text{if } l = k \\
        y_k &\text{otherwise.} 
      \end{cases}
      \]
\end{enumerate} 
     \qed
\end{proposition}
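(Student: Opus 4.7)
The plan is a direct case analysis based on the explicit formulas for the coface maps $\delta^l$ given immediately before the statement. For part~(i), I would start from the canonical representative of $x_{ij} \colon \usphere^2 \to \Embcos^n$ provided by Definition~\ref{def:genxij}, namely the map sending $x \in \usphere^2$ to the configuration $(q_1, \dots, q_{j-1}, q_i + x, q_{j+1}, \dots, q_n)$ paired with the constant tangent-vector tuple, and precompose with $\delta^l$ to read off the result.

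In the generic cases $l < i$, $i < l < j$ and $l > j$, the inserted point sits at a fixed standard location disjoint from the varying point $q_i + x$. After an ambient isotopy of $\RR^2 \times \udisc^1$ moving the $n+1$ stationary points to the standard positions $q_1, \dots, q_{n+1}$ --- which preserves the homotopy class --- the resulting $\usphere^2$-family is a re-indexed copy of the original $x_{ij}$-family, producing $x_{i+1, j+1}$, $x_{i, j+1}$ and $x_{ij}$ respectively, with the shifts precisely matching how the basepoint index $i$ and the perturbation index $j$ get renumbered by the insertion.

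The boundary cases $l = i$ and $l = j$ form the geometric heart of the computation. For $l = i$, the inserted fixed point lands immediately after $q_i$, while the varying point $q_i + x$ is now at position $j+1$. In the fibre of the projection from Theorem~\ref{thm:crosssecconfig} that forgets every coordinate but the $(j+1)$-th, the resulting sphere \emph{encloses two adjacent punctures}, namely $q_i$ and $q_i + \epsilon e$. Under the wedge-decomposition $\pi_2(\RR^2 \times \udisc^1 \setminus Q_j) \cong \pi_2(\bigvee_j \usphere^2)$ underlying Corollary~\ref{cor:piconfig} and Proposition~\ref{prop:genxij}, any such enclosing sphere splits as the sum of the two small sphere classes around the enclosed punctures, yielding $x_{i, j+1} + x_{i+1, j+1}$. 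For $l = j$ the inserted point itself varies with $x$; I would then apply instead the fibre bundle that forgets position $j+1$, whose projection to the base returns $x_{ij}$ while the fibre contribution (computed via a chosen splitting section) is a sphere around $q_i$ in the fibre, representing $x_{i, j+1}$.

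For part~(ii), the generic cases $l \neq k$ are immediate: the tangent vector $v_k = x$ is simply moved to slot $k$ or $k+1$, producing $y_k$ or $y_{k+1}$. For $l = k$ the formula simultaneously duplicates the tangent vector --- contributing $y_k + y_{k+1}$ --- and inserts a new point $q_k + \epsilon x$ varying over a small sphere around $q_k$, which contributes the class $x_{k, k+1}$. That these three variations combine additively in $\pi_2$ follows from the product decomposition $\Embcos^{n+1} \simeq \Conf_{n+1}(\RR^2 \times \udisc^1) \times (\usphere^2)^{n+1}$ together with the fact that a map $\usphere^2 \to X \times Y \times Z$ splits as the sum of its three coordinate projections in $\pi_2$. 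The principal obstacle throughout is the geometric splitting argument in the boundary cases, which must be carried out with care for the basepoint and orientation conventions of Definition~\ref{def:genxij} so that no unexpected signs appear.
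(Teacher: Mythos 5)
Your proposal is correct and is exactly the ``explicit calculation'' that the paper asserts but does not write out: it precomposes the representatives of $x_{ij}$ and $y_k$ from Definition~\ref{def:genxij} with the displayed formulas for $\delta^l$ and identifies the resulting classes, the only substantive step being that a sphere enclosing two punctures splits in $\pi_2$ as the sum of the two small-sphere classes (via the wedge decomposition and Hurewicz). Since the paper offers no proof beyond this, your argument follows the same route and correctly fills in the details, including the two boundary cases $l=i$ and $l=j$ and the product splitting used in part~(ii).
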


Now we can compute $E_{p-1, p}^{1}$ and $E_{p,p}^1$ in the homotopy spectral sequence associated to the cosimplicial space $\Embcos^{\bullet}$.

\begin{corollary}\label{cor: essentialker}
  Let $l, n, r \in \NN$ and $n \geq 2$ and $0 \leq l \leq n$ and $r \geq 2$, and recall the notations from Convention~\ref{conv:config-tangent}.
  For the degeneracy map $\Embcos^{n+1} \xrightarrow{s^l} \Embcos^{n}$, we have 
  \begin{align*}
    \ker s_{\ast}^l &= \ker s_{\ast}^l(c) \times \ker s_{\ast}^l(t) \text{ and} \\
    \bigcap_{l = 0}^{n-1} \ker s_{\ast}^l  &\cong  \bigcap_{l = 0}^{n-1} \ker s_{\ast}^l(c) \times \left(0\right)^{n}.
  \end{align*}
\end{corollary}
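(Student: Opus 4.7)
The plan is to deduce both identities directly from the explicit formula for $s^l$ displayed immediately before Proposition~\ref{prop:degeneracy-map}.

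For the first identity, I would observe that under the weak equivalence $\Embcos^{n+1} \simeq \Conf_{n+1}(\RR^2 \times \udisc^1) \times (\usphere^2)^{n+1}$, the map $s^l$ is literally a Cartesian product of its two component maps $s^l(c)$ (deletion of the $(l+1)$-th configuration point) and $s^l(t)$ (deletion of the $(l+1)$-th tangent vector). Since $\pi_r$ preserves finite products, the induced map on homotopy groups is itself a product of $s^l_\ast(c)$ and $s^l_\ast(t)$ as maps between direct sums, and the kernel of a product of group homomorphisms is the product of the kernels.

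For the second identity, I would combine the first identity with the elementary fact that intersections commute with products of subgroups in each factor to obtain
\[
\bigcap_{l=0}^{n-1} \ker s_\ast^l \cong \Bigl(\bigcap_{l=0}^{n-1} \ker s_\ast^l(c)\Bigr) \times \Bigl(\bigcap_{l=0}^{n-1} \ker s_\ast^l(t)\Bigr),
\]
and then argue that the tangential factor is trivial. By Proposition~\ref{prop:degeneracy-map}(iii), each $\ker s_\ast^l(t)$ consists of tuples in $(\pi_r(\usphere^2))^{n+1}$ whose components all vanish except possibly the $(l+1)$-th. Since $n \geq 2$, the range $\{0, 1, \dots, n-1\}$ contains at least two distinct indices $l_1 \neq l_2$, and any element simultaneously in $\ker s_\ast^{l_1}(t)$ and $\ker s_\ast^{l_2}(t)$ must vanish in every coordinate. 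Hence the tangential intersection is the trivial subgroup, as claimed.

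The argument is essentially a bookkeeping exercise once the product structure of $s^l$ is made explicit; the only point requiring care is verifying that $s^l$ is a genuine product of maps on the nose rather than merely up to homotopy, which is immediate from the concrete formula for $s^l$ recorded above Proposition~\ref{prop:degeneracy-map}. I do not foresee any substantive obstacle.
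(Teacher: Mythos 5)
Your proposal is correct and follows essentially the same route as the paper, whose entire proof is the one-line observation that $s_{\ast}^l = s_{\ast}^l(c) \times s_{\ast}^{l}(t)$; you simply make explicit the bookkeeping (kernels of products are products of kernels, and the tangential intersection vanishes because two distinct indices $l_1 \neq l_2$ already force every coordinate to be zero) that the paper leaves implicit. No changes needed.
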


\begin{proof}
  We have that $s_{\ast}^l = s_{\ast}^l(c) \times s_{\ast}^{l}(t)$.
\end{proof}

\begin{proposition}\label{prop:e1page}
  \begin{enumerate}
    \item For $p \geq 3$ and $1 \leq i < p-1$, let $T$ be the set of basic products of the elements~$x_{i, p-1}$ of height $p-2$, such that each $x_{i, p-1}$ appears exactly once. Let $F$ be the set of basic products of elements $x_{i, p-1}$ of height $p-1$, such that one $x_{k,p-1}$ appears exactly twice and all other $x_{i,p-1}$ appear exactly once. 
      Then we have
    \begin{equation}\label{eq:ep-1p}
       E_{p-1,p}^1 \cong \bigoplus_{T}\pi_{p}(\usphere^{p-1}) \oplus \bigoplus_{F}\pi_{p}(\usphere^{p})
    \end{equation}
    where $\pi_p(\usphere^{p-1})$ and $\pi_{p}(\usphere^p)$ are embedded in $\pi_p(\Conf_{p-1}(\RR^2 \times \udisc^1))$ by composition with the basic products in $T$ and $F$ respectively. 
    \item For $p \geq 2$, let $H$ be the set of basic products of height $p-1$ of the elements in $x_{i,p}$ for $1 \leq i \leq p-1$ such that each $x_{i,p}$ appears exactly once. 
      Then 
    \begin{equation}\label{eq:epp}
      E_{p,p}^1 \cong \bigoplus_{H}\pi_{p}(\usphere^p),
    \end{equation}
    where the direct summands $\pi_p(\usphere^p)$ are embedded in $\pi_{p}(\Conf_{p}(\RR^2 \times \udisc^1))$ by composition with the basic products in $H$.
  \end{enumerate}
\end{proposition}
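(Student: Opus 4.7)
The plan is to combine Proposition~\ref{prop:cosimplicialss}, Corollary~\ref{cor: essentialker}, the Fadell--Neuwirth splitting (Corollary~\ref{cor:piconfig}) and Hilton's theorem (Theorem~\ref{thm:hiltontheorem}) into a systematic enumeration of basic products. First, Proposition~\ref{prop:cosimplicialss} gives $E^1_{p-1,p}\cong \pi_p(\Embcos^{p-1})\cap \bigcap_{l=0}^{p-2}\ker s^l_\ast$ and analogously for $E^1_{p,p}$. Combining the weak equivalence $\Embcos^{n}\simeq \Conf_{n}(\RR^2\times\udisc^1)\times(\usphere^2)^{n}$ from Remark~\ref{rmk:emb&conf} with Corollary~\ref{cor: essentialker} reduces each computation to identifying $\pi_p(\Conf_n(\RR^2\times\udisc^1))\cap \bigcap_{l=0}^{n-1}\ker s^l_\ast(c)$ for $n=p-1$ and $n=p$ respectively.

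Next I would invoke the Fadell--Neuwirth direct sum decomposition
\[
\pi_p(\Conf_n(\RR^2\times\udisc^1))\cong \bigoplus_{j=2}^{n}\pi_p(S_{1,j}\vee\cdots\vee S_{j-1,j})
\]
of Corollary~\ref{cor:piconfig}, and further refine each wedge summand by Hilton's theorem. This writes the whole group as a direct sum of copies of $\pi_p(\usphere^{h(p')+1})$ indexed by basic products $p'$ in the letters $x_{1,j},\ldots,x_{j-1,j}$ for varying $j$. The key observation is that, by Proposition~\ref{prop:degeneracy-map}.ii), the subgroup $\ker s^l_\ast(c)$ is precisely the direct sum of those summands whose indexing basic product contains some letter having $l+1$ as an index; hence $\bigcap_{l=0}^{n-1}\ker s^l_\ast(c)$ is the sum over basic products whose set of involved indices is all of $\{1,\ldots,n\}$. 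Since a basic product from the $j$-th Fadell--Neuwirth summand only uses indices in $\{1,\ldots,j\}$, only $j=n$ can contribute, and within that summand one needs every letter $x_{i,n}$ for $i<n$ to appear at least once (the index $n$ itself appears automatically).

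To finish, each letter $x_{i,n}\in\pi_2(\usphere^2)$ contributes $r_i=1$ to the height, so a basic product of weight $w$ yields the summand $\pi_p(\usphere^{w+1})$, which vanishes for $w\geq p$. Requiring all $n-1$ letters to appear forces $w\geq n-1$. For $E^1_{p,p}$ (with $n=p$) this pins $w$ to $p-1$, giving precisely the basic products in $H$ with summand $\pi_p(\usphere^{p})$. For $E^1_{p-1,p}$ (with $n=p-1$) the admissible weights are $w=p-2$ and $w=p-1$, producing respectively the basic products in $T$ (every $x_{i,p-1}$ appearing once) with summand $\pi_p(\usphere^{p-1})$, and the basic products in $F$ (one $x_{k,p-1}$ appearing twice) with summand $\pi_p(\usphere^{p})$. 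By construction of Hilton's decomposition these summands embed into $\pi_p$ of the configuration space through composition with the corresponding basic product, matching the form asserted in the proposition.

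The step I expect to require the most care is the \emph{summand-wise} identification of $\ker s^l_\ast(c)$ with respect to the Hilton--Fadell--Neuwirth decomposition, in order to justify that the intersection $\bigcap_l \ker s^l_\ast(c)$ really is a direct sum of specific basic-product summands rather than some more complicated subgroup. This rests critically on the precise form of Proposition~\ref{prop:degeneracy-map}.ii); once that is in place, the remaining combinatorial enumeration of basic products with prescribed multiset of letters is straightforward.
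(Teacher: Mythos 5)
Your proposal is correct and follows essentially the same route as the paper's proof: reduce to the configuration-space factor via Proposition~\ref{prop:cosimplicialss} and Corollary~\ref{cor: essentialker}, decompose via Fadell--Neuwirth and Hilton, and then use Proposition~\ref{prop:degeneracy-map}.ii) together with the height bound to single out exactly the basic products in $T$, $F$ and $H$. The only difference is organizational --- you phrase the survival criterion as ``every index in $\{1,\dots,n\}$ must occur, forcing $j=n$ and bounding the weight,'' whereas the paper runs the equivalent case analysis on $j$ and $h(b_k^{(j)})$ --- and the step you flag as delicate (the summand-wise identification of $\ker s^l_\ast(c)$) is precisely what Proposition~\ref{prop:degeneracy-map}.ii) supplies.
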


\begin{proof}
 \rom{1} Recall from Proposition~\ref{prop:cosimplicialss} that $E_{p-1, p}^1 \cong \pi_p(\Embcos^{p-1}) \cap \bigcap_{l = 0}^{p-2} \ker(s^l_{\ast})$.
    By Corollary~\ref{cor: essentialker} we only need to consider the $\pi_{p}(\Conf_{p-1}(\RR^2 \times \udisc^1))$ component of $\pi_p(\Embcos^{p-1})$.
    In other words, 
    \[
    E_{p-1, p}^{1} \cong \pi_{p}(\Conf_{p-1}(\RR^2 \times \udisc^1)) \cap \bigcap_{l = 0}^{p-2} \ker(s^l_{\ast}(c)).
    \]
    Recall from Corollary~\ref{cor:piconfig} that 
    \[
      \pi_{p}(\Conf_{p-1}(\RR^2 \times \udisc^1)) \cong \bigoplus_{j = 2}^{p-1} \pi_{p}(S_{1j} \vee S_{2j} \vee \cdots \vee S_{j-1,j}),
    \] 
    and $x_{ij}$ is the positive generator of $S_{ij}$, $1 \leq i < j \leq p-1$. 
    For a fixed $j$, let $\{b^{(j)}_k\}_{k \in \NN}$ be the set of basic products of the elements $x_{ij}$ for $i = 1, \dots, j-1$.
    Using Theorem~\ref{thm:hiltontheorem}, we have
    \[
      \pi_{p}(\Conf_{p-1}(\RR^2 \times \udisc^1)) \cong \bigoplus_{\mathclap{\substack{1 < j < p-1 \\ k \in \NN, \ h ( b^{(j)}_k ) \leq p-1}}}  \pi_p(\usphere^{h ( b^{(j)}_k ) + 1}).
    \]    
    Next we need to examine which elements of $\pi_{p}(\Conf_{p-1}(\RR^2 \times \udisc^1))$ lie in $\bigcap_{l = 1}^{p-2} \ker(s^l_{\ast}(c))$. 
    By Proposition~\ref{prop:degeneracy-map}.\rom{3}, it is sufficient to see which basic products lie in $\bigcap_{i = 1}^{p-2} \ker(s^l_{\ast}(c))$.
    Let us consider the following cases:
    \begin{enumerate}[label = \alph*)]
      \item Suppose $j \neq p-1$. 
        In this case, we have $s^{p-1}_{\ast}(b^{(j)}_{k}) \neq 0$.
      \item Suppose $j = p-1$ and $h(b^{(j)}_k) \leq p-3$.
        In this case there exists at least one index $1 \leq i < p-1$ such that $x_{i,p-1}$ does not appear in $b^{(j)}_{k}$, and thus $s^{i-1}_{\ast}(b^{(j)}_k) \neq 0$.
      \item Suppose $j = p-1$ and $h(b^{(j)}_k) = p-2$.
        In this case each $x_{i,p-1}$ with $1 \leq i \leq p-2$ appears in $b^{(j)}_{k}$ exactly once.
        Thus for all $0 \leq l \leq p-2$, we have $s^{l}_{\ast}(b^{(j)}_k) = 0$, since $s^{l}_{\ast}(x_{l+1,p-1}) = 0$ and $x_{l+1, p-1}$ appears in $b^{(j)}_k$.
      \item Suppose $j = p-1$ and $h(b^{(j)}_k) = p-1$.
        In this case there exists an index $i_k$ such that $x_{i_k, p-1}$ appears exactly twice in $b^{(j)}_k$, and all other $x_{i,p-1}$ with $1 \leq i \leq p-2$ and $i \neq i_k$ appear exactly once in $b^{(j)}_k$.
        As in c) we see $s_{\ast}^l(b^{(j)}_k) = 0$ for $0 \leq l \leq p-2$.
    \end{enumerate}
    Thus $\bigcap_{i = 1}^{p-2} \ker(s^i_{\ast}(c))$, or $E_{p-1, p}^1$, is generated by basic products of the form in c) and d), which yields (\ref{eq:ep-1p}).
    
 \rom{2} Similar to \rom{1}, we recall that $E_{p, p}^1 \cong \pi_{p}(\Conf_{p}(\RR^2 \times \udisc^1)) \cap \bigcap_{l = 0}^{p-1} \ker(s^{l}_{\ast}(c))$. 
    Furthermore $\bigcap_{l = 0}^{p-1} \ker(s^{l}_{\ast}(c))$ is generated by the basic products $\{b^{(p)}_k\}_{k \in \NN}$ of elements $x_{ip}$ with~$1 \leq i \leq p-1$ such that $h(b^{(p)}_k) = p-1$, and for each $i$ with $1 \leq i \leq p-1$, the element~$x_{ip}$ appears exactly once in $b^{(p)}_k$.
\end{proof}

\begin{remark}\label{rmk:e1page}
  Using the same method as in the proof above, we can see that the abelian group $E_{p, q}^{1}$ for $0 \leq p \leq q$ is isomorphic to the direct sum of the $q$-th homotopy groups of spheres of dimension at most $q$, indexed by the basic products where all elements $\{x_{i, p} \mid 1 \leq i \leq p-1\}$ appear at least once\footnote{The dimension of the spheres and the occurrences of the elements are required to be compatible with each other.}. 
\end{remark}

\begin{remark}\label{rmk:e1BCKS}
  Budney--Conant--Koytcheff--Sinha gives the general formula for the abelian groups $E_{p, q}^{1}$, \cf \cite[Proposition 7.2]{BCKS17} by computing the Bousfield--Kan spectral sequence with integer coefficients associated to the cosimplicial model $C^{\bullet}$ (Remark~\ref{rmk:othermodel}).
  The results in the above proposition agree with those from \cite[Proposition 7.2]{BCKS17} once one removes the homotopy groups of spheres that are 0.
  Our proof, using another cosimplicial model $\Embcos^{\bullet}$ and doing the computation directly from definitions, provides an alternative approach of the computation of the spectral sequences, as well as more details for the arguments of \cite[Proposition 7.2]{BCKS17}.
\end{remark}

With the description of $E_{p-1,p}^1$ and $E_{p,p}^1$ in terms of elements $x_{ij}$ where $1 \leq i \leq j$ and~$j = p-1$ or $j = p$, we are going to give an explicit and simplified formula for the differential $d^1\colon E_{p-1,p}^1 \to E_{p,p}^1$ now.

\begin{proposition}\label{prop:d1lowdim}
  \begin{enumerate}
    \item The differential $d^1 \colon E_{p-1,p}^1 \to E_{p,p}^1$ is trivial for $p = 1$ and $p = 3$.
    \item For $p = 2$, the differential $d^1 \colon E_{1, 2} \to E_{2, 2}$ is an isomorphism.
  \end{enumerate} 
\end{proposition}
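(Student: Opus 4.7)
The plan is to handle the three cases $p = 1, 2, 3$ separately, by computing directly from the explicit formulas for the induced coface maps $\delta^i_\ast$ and codegeneracy maps $s^i_\ast$ of Proposition~\ref{prop:compmaps}, together with the identification of the relevant $E^1$-terms (via Proposition~\ref{prop:e1page} for $p \geq 2$, and from the definition for $p = 1$).

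The case $p = 1$ is immediate: $\Embcos^0$ is the discrete two-point space $\Conf_0(\RR^2 \times \udisc^1)$, so $E_{0,1}^1 = \pi_1(\Embcos^0) = 0$, while $\Embcos^1 \simeq \Conf_1(\RR^2 \times \udisc^1) \times \usphere^2 \simeq \usphere^2$ yields $E_{1,1}^1 = 0$ as well. For $p = 2$, I identify $E_{1,2}^1 \cong \pi_2(\usphere^2) \cong \ZZ$, generated by the class $y_1$ of the $\usphere^2$-factor of $\Embcos^1$, while $E_{2,2}^1 \cong \ZZ$ is generated by $x_{12}$ according to Proposition~\ref{prop:e1page}.ii. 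A direct application of Proposition~\ref{prop:compmaps}.ii to $y_1$ then yields
\[
  d^1(y_1) = \delta^0_{\ast}(y_1) - \delta^1_{\ast}(y_1) + \delta^2_{\ast}(y_1) = y_2 - (x_{12} + y_1 + y_2) + y_1 = -x_{12},
\]
so $d^1$ is an isomorphism.

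I expect the case $p = 3$ to be the main obstacle, since the relevant homotopy groups sit above the Hurewicz range. By Proposition~\ref{prop:e1page}, the only basic products contributing are $x_{12}$ for $E_{2,3}^1$ (the set $F$ is empty, as the sole generator $x_{12}$ cannot appear twice in a basic product) and $[x_{13}, x_{23}]$ for $E_{3,3}^1$; Hilton's theorem then identifies $E_{2,3}^1 \cong \pi_3(\usphere^2) \cong \ZZ$ with generator $x_{12} \circ h$, where $h \colon \usphere^3 \to \usphere^2$ is the Hopf map, and $E_{3,3}^1 \cong \pi_3(\usphere^3) \cong \ZZ$ with generator $[x_{13}, x_{23}]$. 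The essential technical point is that precomposition with $h$ fails to be additive: for $\alpha, \beta \in \pi_2(X)$ one has the correction formula
\[
  (\alpha + \beta) \circ h = \alpha \circ h + \beta \circ h + [\alpha, \beta],
\]
arising from the fact that the James--Hopf invariant of $h$ is a generator of $\pi_3(\usphere^3)$, \cf \cite{Whi78}. Applying this term-by-term to the formulas for $\delta^i_{\ast}(x_{12})$ from Proposition~\ref{prop:compmaps}.i, the plain $x_{ij} \circ h$ contributions cancel in the alternating sum, leaving
\[
  d^1(x_{12} \circ h) = [x_{12}, x_{13}] - [x_{13}, x_{23}].
\]
Two applications of Proposition~\ref{prop:confrelation}.ii, to the triples $(i,j,k) = (2,1,3)$ and $(1,3,2)$, combined with the antisymmetry $x_{ji} = -x_{ij}$, show that both bracket terms equal $-[x_{12}, x_{23}]$, whence $d^1 = 0$ on $E_{2,3}^1$ as claimed.
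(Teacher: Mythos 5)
Your proof is correct, and for $p=1$ and $p=2$ it follows the same route as the paper (a small discrepancy: the paper records $d^1(y_1)=x_{12}$, whereas the alternating sum $y_2-(x_{12}+y_1+y_2)+y_1$ is $-x_{12}$ as you say; either way $d^1$ is an isomorphism). The genuine difference is in the case $p=3$, where your argument is more careful than the paper's. The paper takes ``$x_{12}$'' as the generator of $E^1_{2,3}$ and cancels the four terms $x_{23}-(x_{13}+x_{23})+(x_{12}+x_{13})-x_{12}$ formally, that is, it verifies that the alternating sum of the $\delta^i_\ast$ vanishes on $x_{12}$ as an element of $\pi_2(\Conf_3(\RR^2\times\udisc^1))$ and stops there. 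Strictly, the generator of $E^1_{2,3}\subseteq\pi_3(\Embcos^2)$ is $x_{12}\circ h$, and while naturality gives $\delta^i_\ast(x_{12}\circ h)=(\delta^i_\ast x_{12})\circ h$, precomposition with the Hopf map is not additive, so vanishing of the alternating sum in $\pi_2$ does not by itself give vanishing in $\pi_3$. You supply exactly the missing step: the left-distributivity correction $(\alpha+\beta)\circ h=\alpha\circ h+\beta\circ h+[\alpha,\beta]$ leaves the residue $[x_{12},x_{13}]-[x_{13},x_{23}]$ after the linear terms cancel, and Proposition~\ref{prop:confrelation} (together with $x_{ji}=-x_{ij}$) identifies both brackets with $-[x_{12},x_{23}]$, so the residue is zero. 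Since all classes involved lie in $\pi_2$, the Whitehead bracket is symmetric and the order/sign convention in the Hopf-invariant term is immaterial, so your conclusion is robust. What the paper's shortcut buys is brevity; what your version buys is an actual computation in $\pi_3$, which is the justification the paper's one-line cancellation implicitly relies on.
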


\begin{proof}
  First we have 
  \begin{align*}
    E_{0,1}^1 &\cong \pi_1(\Embcos_{0}) = 0 \text{, and} \\
    E_{1,1}^1 &\cong \pi_1(\Embcos_{1}) \cong \pi_1(\Conf_{1}(\RR^2 \times \udisc^1))  \times \pi_1(\usphere^2) = 0. 
  \end{align*}
  Thus $d^1 \colon E_{0,1}^1 \to E_{1,1}^1$ is trivial.
  
  For $d^1\colon E_{2,3}^1 \to E_{3,3}^1$, we apply Proposition~\ref{prop:e1page} to see that
  \[
    E_{2,3}^1 \cong \pi_3(S^2) \cong \ZZ,
  \]
  with generator $x_{12}$. 
  Thus
  \begin{align*}
    d^1(x_{12}) &= \sum_{i = 0}^{3}\delta_{\ast}^i(x_{12}) \\
                &= x_{23} - (x_{13}+x_{23})+(x_{12}+x_{13})-x_{12}\\
                & = 0.
  \end{align*}
  Therefore $d^1 \colon E_{2,3}^1 \to E_{3,3}^1$ is trivial.
  
  As for $d^1 \colon E_{1,2}^1 \to E_{2,2}^1$, we have 
  \[
    E_{1,2}^1 \cong \pi_2(\Embcos^{1}) \cong \pi_2(\Conf_{1}(\RR^2 \times \udisc^1)) \times \pi_2(\usphere^2) \cong \{0\} \times \pi_2(\usphere^2)
  \]
  with generator $y_1$ for the component $\pi_2(\usphere^2)$, and
  \[
    E_{2, 2}^1 \cong \pi_2(\Embcos^{2}) \cap \bigcap_{l = 0}^{1} s_{\ast}^l \cong \pi_{2}(\Conf_2(\RR^2 \times \udisc^1)) \times \{e\} \times \{e\},
  \]
  with generator $x_{12}$ for the component $\pi_{2}(\Conf_2(\RR^2 \times \udisc^1))$.
  Applying Proposition~\ref{prop:cosimplicialss}, we have
  \begin{align*}
    d^1(y_1) &= \sum_{i = 0}^{2}\delta_{\ast}^i(y_1) \\
                &= y_2-(x_{12}+y_1+y_2)+y_1\\
                & = x_{12}.
  \end{align*}
  Therefore, $d^1 \colon E_{1,2}^1 \to E_{2,2}^1$ is an isomorphism.
\end{proof}

Now let us consider $d^1 \colon E_{p-1, p}^1 \to E_{p,p}^1$, for $p \geq 4$.
According to Proposition~\ref{prop:e1page} we have for $p \geq 4$,
\begin{align*}
  E_{p-1,p}^1 &\cong \bigoplus_{T}\pi_{p}(\usphere^{p-1}) \oplus \bigoplus_{F}\pi_{p}(\usphere^p) \\
              & \cong \bigoplus_{T}\ZZ/2\ZZ \oplus \bigoplus_{F}\ZZ \\
  E_{p,p}^1 &\cong \bigoplus_{H}\pi_p(\usphere^p) \cong \bigoplus_{H}\ZZ.
\end{align*}
Since $E_{p,p}^1$ is torsion free, we see that $d^1$ is trivial on $\bigoplus_{T}\pi_p(\usphere^{p-1})$, and we conclude that we only need to consider the restriction of $d^1$ to $\bigoplus_{F}\pi_p(\usphere^p)$. 

\begin{notation}\label{not:fep-1p}
  We denote the torsion-free part of $E_{p-1,p}^1$ by $\quot{E_{p-1,p}^1}{\mathrm{tors}}$, \ie the summand $\bigoplus_{F}\pi_p(\usphere^p)$ in Equation~\ref{eq:ep-1p}.
\end{notation}

\begin{proposition}\label{prop:separateindices}
  For $p \geq 4$, denote by $D_{p}^{\mathrm{sep}}$ the set of iterated Whitehead products of the elements $x_{i,p-1}$ for $i = 1, \dots, p-2$ with the following properties:
  \begin{enumerate}
    \item For every $w \in D_{p}^{sep}$, there exists one $x_{k(w),p-1}$ that appears exactly twice and all other $x_{i,p-1}$ with $1 \leq i \leq p-2$ and $i \neq k(w)$ appear exactly once.
    \item Every $w \in D_{p}^{sep}$ is of the form $w = [c_1, c_2]$ where $c_1$ is an iterated Whitehead product of elements $x_{i,p-1}$ with $i \in I$ and $c_2$ is an iterated Whitehead product of elements $x_{i,p-1}$ with $j \in J$ such that $I, J \subseteq \{1, \dots, p-2\}$, $I \cap J = \{k(w)\}$ and $I \cup J = \{1, 2, \dots, p-2\}$.
  \end{enumerate}
  Then, $\quot{E_{p-1,p}^1}{\mathrm{tors}}$ is generated by $D_{p}^{\mathrm{sep}}$.
\end{proposition}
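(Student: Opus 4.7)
The plan is to use the Jacobi identity and the antisymmetry of the Whitehead product (Proposition~\ref{prop:whiteheadprod}) to rewrite each basic product in the generating set $F$ of $\quot{E_{p-1,p}^1}{\mathrm{tors}}$ provided by Proposition~\ref{prop:e1page} as a $\ZZ$-linear combination of elements of $D_p^{\mathrm{sep}}$.

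First I would verify that $D_p^{\mathrm{sep}}$ is actually contained in $\quot{E_{p-1,p}^1}{\mathrm{tors}}$. An element of $D_p^{\mathrm{sep}}$ is an iterated Whitehead product of $p-1$ factors $x_{i,p-1} \in \pi_{2}(\Conf_{p-1}(\RR^2 \times \udisc^1))$, hence lies in $\pi_{p}$; because each $x_{i,p-1}$ with $1 \leq i \leq p-2$ appears as a factor, it lies in $\ker s^{l}_{\ast}(c)$ for every $0 \leq l \leq p-2$ by Proposition~\ref{prop:degeneracy-map}.\rom{2}; and under the Hilton decomposition of Theorem~\ref{thm:hiltontheorem} it corresponds to a basic-product summand $\pi_p(\usphere^p) \cong \ZZ$, so it sits in the torsion-free part.

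Next I would reformulate the separability condition structurally. Viewing an iterated Whitehead product $w$ as a binary tree with leaves labelled by the $x_{i,p-1}$'s, the requirement that every $x_{i,p-1}$ with $i \neq k$ appears exactly once already forces $I \cap J \subseteq \{k\}$ and $I \cup J = \{1,\ldots,p-2\}$ for any writing $w = [c_1,c_2]$. Thus $w \in D_p^{\mathrm{sep}}$ is equivalent to demanding that the two leaves labelled $x_{k,p-1}$ lie in different children of the root.

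Third, I would run an induction on the \emph{defect} of $w$, defined as the distance in the underlying binary tree from the root to the least common ancestor $N$ of the two $x_{k,p-1}$-leaves. Defect zero is exactly the separability condition. If the defect is positive, let $P$ be the parent bracket of $N$; after possibly applying antisymmetry at $P$, we may write the subexpression rooted at $P$ as $[A,[N_1,N_2]]$ with one $x_{k,p-1}$-leaf in each of $N_1,N_2$. Proposition~\ref{prop:whiteheadprod}.\rom{3} then expresses $[A,[N_1,N_2]]$ as a signed sum of $[N_1,[N_2,A]]$ and $[N_2,[A,N_1]]$; substituting back into $w$ produces a linear combination of iterated Whitehead products in each of which the two $x_{k,p-1}$-leaves now sit in different children of the bracket formerly at $P$, strictly decreasing the defect. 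Iterating brings the defect to zero and exhibits $w$ as a $\ZZ$-linear combination of elements of $D_p^{\mathrm{sep}}$.

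The main obstacle is sign bookkeeping: the subexpressions $A$, $N_1$, $N_2$ live in various homotopy groups $\pi_n$ whose dimensions depend on the depths of the corresponding subtrees, and Proposition~\ref{prop:whiteheadprod} contributes degree-dependent signs at each invocation of antisymmetry and Jacobi. However, since none of these signs annihilates a term, they affect only the explicit coefficients and not the conclusion that $D_p^{\mathrm{sep}}$ spans $\quot{E_{p-1,p}^1}{\mathrm{tors}}$.
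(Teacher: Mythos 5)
Your argument is correct, and it reaches the same reduction as the paper — namely that it suffices to rewrite any iterated Whitehead product satisfying condition i) as a $\ZZ$-linear combination of separated ones using only antisymmetry and the Jacobi identity — but it organises the rewriting differently. The paper inducts on $p$: if both copies of the repeated generator sit inside one factor $a_1$ of $w=[a_1,a_2]$, it relabels the generators occurring in $a_1$ via a bijection $r$ so as to view $a_1$ as an element of $D_{\#M+2}$, invokes the inductive hypothesis there to separate the two copies across the top bracket of $a_1$, and then applies Jacobi once at the root; this requires an explicit base case $p=4$ checked by enumeration. You instead induct on the depth of the least common ancestor of the two repeated leaves, applying Jacobi locally at the parent of that node to push the separation point up one level at a time. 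Your scheme buys a trivial base case (defect zero is the definition of separatedness) and avoids the relabelling bijection and the $p=4$ enumeration, at the cost of tracking the tree-depth invariant; the paper's version buys a statement that is literally an induction on the proposition being proved, which it then reuses notationally in the combinatorial sections. Two small points worth tightening: your membership check for $D_p^{\mathrm{sep}}$ in $\ker s^l_{\ast}(c)$ should be phrased via naturality of the Whitehead product (as in the proof of Proposition~\ref{prop:degeneracy-map}.ii)) rather than by direct appeal to that statement, since an arbitrary iterated product need not be a basic product; and the placement in the torsion-free summand uses that, by Hall's theorem cited in Lemma~\ref{lem:eppalter}, such a product is a linear combination of basic products of the same height $p-1$, each contributing a copy of $\pi_p(\usphere^p)\cong\ZZ$.
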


\begin{proof}
  Denote by $D_{p}$ the set of iterated Whitehead products of the elements $x_{i,p-1}$ with $i = 1, \dots, p-2$ satisfying only condition \rom{1}.
  Using the same argument as in the proof of Proposition~\ref{prop:e1page}, we see that $D_{p}^{\mathrm{sep}} \subseteq E_{p-1, p}^1$ and $D_{p} \subseteq E_{p-1,p}^1$.
  In particular, the basic products in $F$, which are a basis of $\quot{E_{p-1,p}^1}{\mathrm{tors}}$, are contained in $D_{p}$.   
  We have reduced the desired statement to the following claim which we prove by induction.
 
 \textbf{Claim.}
   \textit{For $p \geq 4$, any element of $D_p$ can be written as a linear combination of elements of $D_{p}^{\mathrm{sep}}$ using only the Jacobi identity and antisymmetry relations (Proposition~\ref{prop:whiteheadprod}).}

 For $p = 4$, the claim follows by listing all the elements of $D_4$ and using the Jacobi identity of the Whitehead product.
 
 Assume that the claim is true for all $p \leq n$ with $n \geq 4$.
 Let $p = n+1$ and consider~$\widetilde{w} = [a_1, a_2]\in D_{n+1}$. Without loss of generality, we can assume that $x_{1, n}$ is the repeated element in $\widetilde{w}$.
 If the two copies of $x_{1,n}$ appear in $a_1$ and $a_2$ separately, then $\widetilde{w}$ is already an element of $D_{p}^{\mathrm{sep}}$. 
 Otherwise, both copies of $x_{1,n}$ appear in either $a_1$ or $a_2$, say they appear in $a_1$. 
 By assumption $a_1$ is a Whitehead product of elements $x_{m,n}$.
 Define the set $M \coloneqq \{m \in \NN | x_{m, n} \text{ appears in } a_1\}$.
 So we know $\# M \leq n-2$ and $1 \in M$. 
 The element $x_{m, n}$ appears exactly once in $a_1$, for $1 \neq m \in M$
 
 There is a bijection $r \colon \{x_{m,n} \mid m \in M\} \to \{x_{i, \#M +1} \mid i = 1, \dots, \#M\}$ such that $x_{1,n}$ is mapped to $x_{1,\#{M}+1}$. 
 Define $a_1' \in D_{\#{M}+2}$ by replacing each occurrence of $x_{m,n}$ in $a_1$ by~$r(x_{m,n})$. 
 By an inductive assumption, we can write $a_1'$ as a finite sum $a_1' = \sum_{i \in I} [c_{i1}', c_{i2}']$  such that $[c_{i1}', c_{i2}'] \in D_{\#{M}+2}^{\mathrm{sep}}$ and $x_{1,\#{M}+1}$ appears exactly twice in $[c_{i1}, c_{i2}]$. 
 Thus, by replacing each $x_{i, \# M +1}$ by $r^{-1}(x_{i, \# M +1})$ we obtain $a_1 = \sum_{i \in I} [c_{i1}, c_{i2}]$ such that $[c_{i1}, c_{i2}]$ is a Whitehead product of the elements $x_{m,n}$ with $m \in M$, where $x_{1,n}$ appears exactly twice and $x_{m,n}$ appears exactly once for $m \neq 1$.
 
 Therefore $\widetilde{w}$ can be written as
 \begin{align*}
   \widetilde{w} &= \sum_{i \in I}\big[[c_{i1}, c_{i2}], a_2\big] \\
             &= \sum_{i \in I}(-1)^{\epsilon_1}\big[[c_{i1}, a_2], c_{i2}\big] + (-1)^{\epsilon_2}\big[[a_2, c_{i2}], c_{i1}\big],
 \end{align*}
 where $\epsilon_1$ and $\epsilon_2$ denote the signs which come from the Jacobi identity for the Whitehead product.
 For every $i \in I$, we have that $\left[[c_{i1}, a_2], c_{i2}\right], \left[[a_2, c_{i2}], c_{i1}\right] \in D_{n+1}^{\mathrm{sep}}$. 
 Thus $\widetilde{w}$ is a linear combination of elements of $D_{p}^{\mathrm{sep}}$.
\end{proof}

The upshot is that it is sufficient, for the computation $d^1 \colon E_{p-1,p}^1 \to E_{p,p}^1$, to compute its evaluation on $w$ for every $w \in D_{p}^{\mathrm{sep}}$.

\begin{theorem}\label{prop:d1simplified}
  Let $w = [c_1, c_2] \in D_{p}^{\mathrm{sep}}$, say with repeated occurrence of $x_{k,p-1}$. 
  We write\footnote{By abuse of notation we hide the inner brackets of iterated Whitehead products when we write an elements as done here.} $c_1$ and $c_2$ as $c_1 = [\dots x_{i,p-1}\dots x_{k,p-1}\dots]$ and $c_2=[\dots x_{k,p-1}\dots x_{j,p-1}\dots]$.
  The index $k$ will be fixed through out the proposition.
  Then we have 
  \[
    d^1(w) = \partial^k(w) + \partial^{p-1}(w),
  \]
  where
  \begin{multline*}
    \partial^k(w)= (-1)^k \big[ [\dots x_{i', p}\dots x_{k,p}\dots], [\dots x_{k+1,p}\dots x_{j', p}\dots] \big] \\+(-1)^k \big[ [\dots x_{i', p}\dots x_{k+1,p}\dots],[\dots x_{k,p}\dots x_{j', p}\dots] \big]
  \end{multline*}
  and
  \begin{multline*}
    \partial^{p-1}(w) = (-1)^{p-1} \big[[\dots x_{i,p-1}\dots x_{k,p-1}\dots], [\dots x_{k,p}\dots x_{j,p}\dots] \big] \\
    +(-1)^{p-1} \big[[\dots x_{i, p}\dots x_{k,p}\dots], [\dots x_{k,p-1}\dots x_{j,p-1}\dots]\big],
  \end{multline*}
  where $i'=i$ if $i < k$ and $i'=i+1$ if $i > k$ and $j'=j$ if $j < k$ and $j'=j+1$ if $j > k$.
\end{theorem}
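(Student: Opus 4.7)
My plan is to compute $d^1(w) = \sum_{l=0}^{p}(-1)^l \delta^l_*(w)$ directly from Proposition~\ref{prop:cosimplicialss}, combining the explicit action of each coface map on the generators $x_{a,p-1}$ from Proposition~\ref{prop:compmaps} with the naturality and bilinearity of Whitehead products (Proposition~\ref{prop:whiteheadprod}). Naturality gives $\delta^l_*(w) = [\delta^l_*(c_1), \delta^l_*(c_2)]$, and bilinearity expands each $\delta^l_*(c_i)$ into a short sum of iterated Whitehead products in $\pi_p(\Conf_p(\RR^2 \times \udisc^1))$: for $l \in \{0,\dots,p-2\}$ there is one deterministic ``shift'' term plus, whenever $l$ equals an index $a$ appearing in $c_i$, one extra ``split'' term arising from the two-term value $\delta^l_*(x_{l,p-1}) = x_{l,p} + x_{l+1,p}$.

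The main ingredient is a telescoping cancellation over the interior range $l \in \{0,\dots,p-2\}$. On generators, $\delta^l_*(x_{a,p-1})$ and $\delta^{l+1}_*(x_{a,p-1})$ agree unless $a \in \{l,l+1\}$, in which case they differ by $\pm x_{l+1,p}$. Because every index $a \in \{1,\dots,p-2\} \setminus \{k\}$ appears exactly once in $w$, the contributions to the alternating sum from $\delta^l_*(w)$ and $\delta^{l+1}_*(w)$ cancel for every $l$ whose changing indices $\{l,l+1\}$ avoid both the doubled index $k$ and the top index $p-1$. The surviving contributions come only from the special indices $l = k$ and $l \in \{p-1,p\}$.

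At $l = k$, the two copies of $x_{k,p-1}$ in $w$ cause a four-term expansion of $\delta^k_*(w)$; after subtracting the pieces already accounted for by the telescoping against $l = k-1$ and $l = k+1$, precisely the two ``mixed'' terms of $\partial^k(w)$ remain---those in which the two copies of $x_{k,p-1}$ are replaced by $x_{k,p}$ on one side of the bracket and $x_{k+1,p}$ on the other---with sign $(-1)^k$. At $l = p$ the coface is the identity, contributing $+w$; at $l = p-1$ each $x_{a,p-1}$ is replaced by $x_{a,p-1} + x_{a,p}$ independently, producing $2^{p-1}$ summands via bilinearity. The alternating sum $(-1)^{p-1}\delta^{p-1}_*(w) + (-1)^p w$, combined with the telescoping against $\delta^{p-2}_*(w)$, collapses to exactly the two terms of $\partial^{p-1}(w)$, in which one entire half of the bracket retains the second index $p-1$ while all second indices in the other half are switched to $p$, carrying the sign $(-1)^{p-1}$.

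The main obstacle is the detailed bookkeeping of the cancellations and of the signs, especially among the exponentially many summands of $\delta^{p-1}_*(w)$ and the four-term expansion at $l = k$, and verifying that the terms paired up by the telescoping agree as elements of $\pi_p(\Conf_p(\RR^2 \times \udisc^1))$. This relies on applying the naturality of the Whitehead product consistently at every level of the iterated brackets inside $c_1$ and $c_2$, so that the local cancellations observed on generators lift to cancellations of the full nested Whitehead-product expressions without accidentally invoking the antisymmetry or Jacobi relations (which would produce extra terms not present in the claimed normal form).
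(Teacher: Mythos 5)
Your overall strategy coincides with the paper's: expand $d^1(w)=\sum_{l=0}^{p}(-1)^l\delta^l_*(w)$ using naturality of the Whitehead product and Proposition~\ref{prop:compmaps}, let the generic indices $l$ cancel telescopically, and read off the surviving contributions at $l=k$ and $l=p-1$, using $l=k\pm1$, $l=p-2$ and $l=p$ as cancellation partners. Your treatment of the four-term expansion at $l=k$ is exactly the paper's Claim~2.

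There is, however, a genuine gap at $l=p-1$. You correctly note that bilinearity expands $\delta^{p-1}_*(w)$ into $2^{p-1}$ summands, but you then assert that adding $(-1)^p\delta^p_*(w)$ and telescoping against $(-1)^{p-2}\delta^{p-2}_*(w)$ makes the sum ``collapse to exactly the two terms of $\partial^{p-1}(w)$''. Those two cancellations only dispose of the two \emph{pure} summands (all second indices equal to $p-1$, respectively all equal to $p$, throughout both $c_1$ and $c_2$); they say nothing about the $2^{p-1}-4$ summands in which the second indices $p-1$ and $p$ are mixed \emph{inside} $c_1$ or inside $c_2$. These terms are not cancelled by any other coface --- they vanish. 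The paper proves this via the identity $[\dots[x_{i,p-1}+x_{i,p},\,x_{j,p-1}+x_{j,p}]\dots]=[\dots[x_{i,p-1},x_{j,p-1}]\dots]+[\dots[x_{i,p},x_{j,p}]\dots]$, which rests on Proposition~\ref{prop:confrelation}.iii ($[x_{ij},x_{kl}]=0$ for disjoint index pairs) propagated through the nested brackets by induction; pushing the vanishing past an outer bracket requires the Jacobi identity (for instance $[[x_{i,p-1},x_{j,p-1}],x_{m,p}]=0$ follows from Jacobi because both $[x_{i,p-1},x_{m,p}]$ and $[x_{j,p-1},x_{m,p}]$ vanish). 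Your closing stipulation that the argument should go through ``without accidentally invoking the antisymmetry or Jacobi relations'' therefore rules out precisely the tool you need: without it, $\delta^{p-1}_*(c_i)$ does not reduce to the two-term expression used in $\partial^{p-1}(w)$, and the alternating sum never reaches the claimed normal form. (All of this takes place in $\pi_p(\Conf_p(\RR^2\times\udisc^1))$, where these relations hold and do not create spurious extra terms; the same remark applies to the boundary case $k=p-2$, where the telescoping partners overlap and must be checked separately, as the paper notes at the end of its proof.)
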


Before we prove the theorem, let us take a look at an example of computation of~$d^1$.

\begin{example}
  Let $k = 2$ and $p = 8$ and $w \in D^{\mathrm{sep}}_8$ of the form
  \[
    w = \big[ \left[ [x_{37}, x_{27}], x_{57} \right], \left[ [x_{17}, x_{27}], [x_{47}, x_{67}]\right] \big].
  \]
  Now let us calculate $d^1(w)$ using the formulas in Theorem~\ref{prop:d1simplified}.
  We have 
  \[
    \partial^2(w) = \big[ \left[ [x_{48}, x_{28}], x_{68} \right], \left[ [x_{18}, x_{38}], [x_{58}, x_{78}]\right] \big] + \big[ \left[ [x_{48}, x_{38}], x_{68} \right], \left[ [x_{18}, x_{28}], [x_{58}, x_{78}]\right] \big],
  \] 
  and
  \[
    \partial^{7}(w) = -\big[ \left[ [x_{37}, x_{27}], x_{57} \right], \left[ [x_{18}, x_{28}], [x_{48}, x_{68}]\right] \big] -\big[ \left[ [x_{38}, x_{28}], x_{58} \right], \left[ [x_{17}, x_{27}], [x_{47}, x_{67}]\right] \big].
  \]
\end{example}

\begin{proof}[Proof of Theorem \ref{prop:d1simplified}] First we proof the proposition for $k \leq p-3$.

  \textbf{Claim 1.}
    \textit{For $l \neq k$, $p-1, p$, each of the elements $(-1)^{l-1}\delta_{\ast}^{l-1}(w)$, $(-1)^{l}\delta_{\ast}^{l}(w)$ and $(-1)^{l+1}\delta_{\ast}^{l+1}(w)$} can be written canonically as a sum of two iterated Whitehead products of $x_{i,p}$ with $1 \leq i < p$ such that every summand of $(-1)^{l}\delta_{\ast}^{l}(w)$ appears in $(-1)^{l-1}\delta_{\ast}^{l-1}(w)$ or $(-1)^{l+1}\delta_{\ast}^{l+1}(w)$ with opposite sign.
    
    We note first that $\delta_{\ast}^{l-1}(x_{i, p-1}) = \delta_{\ast}^{l}(x_{i, p-1}) = \delta_{\ast}^{l+1}(x_{i, p-1})$ for $i \neq l-1$, $l$ and $l+1$. 
    Moreover,
    \[
      \delta_{\ast}^{r}(x_{i, p-1})=
      \begin{cases}
        x_{i+1, p}, &\text{if } i > l+1 \\
        x_{i,p}, &\text{if } i < l-1
      \end{cases}
    \]
    for $r = l-1$, $l$ and $l+1$.

    Without loss of generality, we write $w = [\dots x_{l,p-1}\dots x_{l-1,p-1}\dots x_{l+1,p-1}\dots]$, where we show only the elements that are interesting for us. 
    Note that in this presentation of $w$, the order in which the elements $x_{l,p-1}$,  $x_{l-1,p-1}$ and $x_{l+1,p-1}$ appear does not play a role. 
    We calculate $\delta^{r}_{\ast}(w)$ for $r = l-1, l$ and $l+1$:
    \begin{align*}
      \delta_{\ast}^{l-1}(w) &= [\dots x_{l+1, p}\dots x_{l-1, p} + x_{l, p}\dots x_{l+2, p}\dots] \\
      \delta_{\ast}^{l}(w) &= [\dots x_{l,p} + x_{l+1, p}\dots x_{l-1, p}\dots x_{l+2, p}\dots] \\
      \delta_{\ast}^{l+1}(w) &= [\dots x_{l, p}\dots x_{l-1, p}\dots x_{l+1, p}+x_{l+2, p}\dots]
    \end{align*}
    Thus we see that $[\dots x_{l+1, p}\dots x_{l-1, p}\dots x_{l+2, p}\dots]$ of $\delta_{\ast}^{l}(w)$ appears in $\delta_{\ast}^{l-1}(w)$ and the term $[\dots x_{l, p}\dots x_{l-1, p}\dots x_{l+2, p}\dots]$ of $\delta_{\ast}^{l}(w)$ appears in $\delta_{\ast}^{l+1}(w)$.
    Since the signs in front of $\delta_{\ast}^{r}$ with $r = l-1$, $l$ and $l+1$ in $d^1$ alternate, we see that in $d^1(w)$ the terms of $(-1)^{l}\delta_{\ast}^l(c)$ are cancelled by terms of $(-1)^{l-1}\delta_{\ast}^{l-1}(w)$ and $(-1)^{l+1}\delta_{\ast}^{l+1}(w)$ as desired.
    
  \textbf{Claim 2.}
    \textit{For $l = k$, after cancelling terms of $(-1)^k \delta_{\ast}^k(w)$ by terms of $(-1)^{k-1}\delta_{\ast}^{k-1}(w)$ and $(-1)^{k+1}\delta_{\ast}^{k+1}(w)$ as in \textbf{Claim 1.}, the remaining terms of $(-1)^k \delta_{\ast}^k(w)$ is equal to $\partial^k(w)$.}
    
    For convenience of the proof, we write without loss of generality
    \[
      w = [\dots x_{k,p-1}\dots x_{k-1,p-1}\dots x_{k,p-1}\dots x_{k+1,p-1}\dots].
    \] 
    Again note that for $i \neq k-1$, $k$, $k+1$, we have $\delta_{\ast}^{k-1}(x_{i,p-1}) = \delta_{\ast}^k(x_{i,p-1}) = \delta_{\ast}^{k+1}(x_{i,p-1})$, and note
    \begin{align*}
      \delta_{\ast}^{k-1}(w) &= [\dots x_{k+1, p}\dots x_{k-1, p}+x_{k, p}\dots x_{k+1, p}\dots x_{k+2, p}\dots] \\
      \delta_{\ast}^k(c) &= [\dots x_{k,p}+x_{k+1, p}\dots x_{k-1, p}\dots x_{k, p}+x_{k+1, p}\dots x_{k+2, p}\dots] \\
      \delta_{\ast}^{k+1}(w) &= [\dots x_{k,p}\dots x_{k-1,p}\dots x_{k, p}\dots x_{k+1, p}+x_{k+2, p}\dots].
    \end{align*}
    Thus after cancelling with terms of $\delta_{\ast}^{k-1}(w)$ and $\delta_{\ast}^{k+1}(w)$, the remaining term of $\delta_{\ast}^k(w)$ is
    \begin{multline*}
      (-1)^k[\dots x_{k,p}\dots x_{k-1,p}\dots x_{k+1, p}\dots x_{k+2, p} \dots] \\+ (-1)^k[\dots x_{k+1,p}\dots x_{k-1,p}\dots x_{k,p}\dots x_{k+2, p}\dots].
    \end{multline*}
    Writing $w$ as $w=[c_1, c_2] = \left[[\dots x_{i', p-1}\dots x_{k, p-1} \dots], [\dots x_{k, p-1}\dots x_{j', p-1}\dots] \right]$, the remaining terms of $\delta_{\ast}^k(w)$ look like
    \begin{multline*}
      (-1)^k\big[[\dots x_{i', p}\dots x_{k,p}\dots], [\dots x_{k+1, p}\dots x_{j', p}\dots]\big] \\ + (-1)^k\big[[\dots x_{i',p}\dots x_{k+1, p}\dots], [\dots x_{k,p}\dots x_{j', p}\dots]\big],
    \end{multline*}
    which we recognise as $\partial^k(w)$ as desired.

  \textbf{Claim 3.}
    After cancelling with terms of $(-1)^{p-2}\delta_{\ast}^{p-2}(w)$ and $(-1)^{p}\delta_{\ast}^{p}(w)$, the remaining term of $(-1)^{p-1}\delta_{\ast}^{p-1}(w)$ is $\partial^{p-1}(w)$.
    
    In order to prove this claim, we write $w = [c_1, c_2]$ as in Proposition~\ref{prop:d1simplified}. 
    Recall that~$\delta_{\ast}^{p-1}(x_{i,p-1}) = x_{i,p-1} + x_{i,p}$, so we have
    \begin{equation*}
      \begin{split}
        \delta_{\ast}^{p-1}(w) &= [\delta_{\ast}^{p-1}(c_1), \delta_{\ast}^{p-1}(c_2)] \\
        &= \big[[\dots x_{i,p-1} + x_{i, p}\dots x_{k, p-1}+x_{k, p}\dots], [\dots x_{k,p-1}+x_{k,p}\dots x_{j,p-1}+x_{j, p} \dots]\big].
      \end{split}
    \end{equation*}

    Recall that by assumption $c_1$ and $c_2$ are iterated Whitehead products of the elements~$x_{i,p-1}$ with $i \in I$ and $x_{j,p-1}$ with $j \in J$ where $I, J \subseteq \{1, \dots, p-2\}$ and $I \cap J = \{k\}$ and $I \cup J = \{1, \dots, p-2\}$. 
    Furthermore, each $x_{i,p-1}$ with $i \in I$ appears exactly once in $c_1$ and similarly each $x_{j,p-1}$ with $j \in J$ appears exactly once in $c_2$.

    Again recall that $\quot{E_{p-1,p}^1}{\mathrm{tors}}$ is a subgroup of $\pi_{p}(\Embcos_{p-1}) \cong  \pi_p(\Conf_{p-1}(\RR^2 \times \udisc^1))$. 
    For an element $w \in \pi_p(\Conf_{p-1}(\RR^2 \times \udisc^1))$ of the form $\left[\dots[x_{i, p-1}+x_{i, p}, x_{j, p-1}+x_{j, p}]\dots \right]$ with $i \neq j$, we have by Proposition~\ref{prop:confrelation} the following equality
    \[
      \big[\dots[x_{i, p-1}+x_{i, p}, x_{j, p-1}+x_{j, p}]\dots\big] = \big[\dots[x_{i, p-1}, x_{j, p-1}]\dots\big] +\big[\dots[x_{i,p}, x_{j,p}]\dots\big].
    \]

    Thus by induction on the number of brackets the brackets, we have
    \begin{align*}
      \delta_{\ast}^{p-1}(c_1) &= [\dots x_{i, p-1}\dots x_{k, p-1}\dots] + [\dots x_{i, p}\dots x_{k,p} \dots] \\
      \delta_{\ast}^{p-1}(c_2) &= [\dots x_{k, p-1}\dots x_{j,p-1}\dots] + [\dots x_{k, p}\dots x_{j,p} \dots].
    \end{align*}
    and thus
    \begin{align*}
      \delta_{\ast}^{p-1}(w) &= \big[[\dots x_{i, p-1}\dots x_{k, p-1}\dots], [\dots x_{k, p-1}\dots x_{j, p-1}\dots]\big] \\
                    &+ \big[[\dots x_{i, p}\dots x_{k, p} \dots], [\dots x_{k, p}\dots x_{j, p} \dots]\big] \\
                    &+\big[[\dots x_{i, p-1}\dots x_{k, p-1}\dots], [\dots x_{k, p}\dots x_{j, p}\dots]\big] \\
                    &+ \big[[\dots x_{i, p}\dots x_{k, p} \dots], [\dots x_{k, p-1}\dots x_{j, p-1}\dots]\big].
    \end{align*}

    Let us now look at $\delta_{\ast}^{p-2}(w)$ and $\delta_{\ast}^{p}(w)$.
    We have
    \[
      \delta_{\ast}^{p}(w) = \big[[\dots x_{ik}\dots x_{k, p-1}\dots], [\dots x_{k, p-1}\dots x_{j, p-1}\dots]\big].
    \]

    Assume without loss of generality that $x_{p-2,p-1}$ appears in $c_1$, and write $c_1$ of the form~$c_1 = [\dots x_{i, p-1}\dots x_{k, p-1}\dots x_{p-2,p-1}\dots]$.
    We get 

    \begin{align*}
      \delta_{\ast}^{p-2}(w) &=\big[[\dots x_{i, p}\dots x_{k, p}\dots x_{p-2, p}+x_{p-1,p}\dots], [\dots x_{k, p}\dots x_{j, p}\dots]\big]\\ 
                    &=\big[[\dots x_{i, p}\dots x_{k, p}\dots x_{p-2, p}\dots], [\dots x_{k, p}\dots x_{j, p} \dots]\big] \\
                    &+\big[[\dots x_{i, p}\dots x_{k, p}\dots x_{p-1, p}\dots], [\dots x_{k, p}\dots x_{j, p}\dots]\big].
    \end{align*}

    Thus after cancelling with the terms of $(-1)^{p-2}\delta_{\ast}^{p-2}(w)$ and $(-1)^{p}\delta_{\ast}^{p}(w)$, the remaining term of $(-1)^{p-1}\delta_{\ast}^{p-1}(w)$ is
    \begin{multline*}
      (-1)^{p-1}\big[[\dots x_{i, p-1}\dots x_{k, p-1}\dots], [\dots x_{k, p}\dots x_{j, p} \dots]\big] \\+ (-1)^{p-1} \big[[\dots x_{i, p}\dots x_{k, p} \dots], [\dots x_{k, p-1}\dots x_{j, p-1}\dots]\big],
    \end{multline*}
    which is exactly $\partial^{p-1}(w)$ as desired.
  
  Finally one can prove Claim 1 for $k = p-2$ analogously.
  Then we can explicitly write down $\delta^{r}_{\ast}(w)$ for $r = p-3, p-2, p-1$ and $p$ and obtain the desired formula in the proposition.
\end{proof}

\begin{remark}\label{rmk:samebrackets}
  Note that in the calculations of the proof we only changed the indices of~$x_{i,p-1}$ for $i = 1, \dots, p-2$, whereas the bracketing of $c_1$ and $c_2$ was not changed at all.
  More explicitly, the expressions $\left[ [\dots x_{i', p}\dots x_{k,p}\dots], [\dots x_{k+1,p}\dots x_{j', p}\dots] \right]$ in the formula of $\partial^{k}(w)$ and $\left[[\dots x_{i,p-1}\dots x_{k,p-1}\dots], [\dots x_{k,p}\dots x_{j,p}\dots] \right]$
  in the formula of $\partial^{p-1}(w)$ have the same bracketing as the one of $c_1$.
  The term $\left[[\dots x_{i', p}\dots x_{k+1,p}\dots],[\dots x_{k,p}\dots x_{j', p}\dots]\right]$ in $\partial^k(w)$ and $\left[[\dots x_{i, p}\dots x_{k,p}\dots], [\dots x_{k,p-1}\dots x_{j,p-1}\dots]\right]$ in $\partial^{p-1}(w)$ have the same bracketing as $c_2$. 
  We note in advance here that the bracketing determines the shape of the unitrivalent graphs in the combinatorial interpretation in the next section.
\end{remark}


\section{Combinatorial interpretation}\label{sec:combidi}
\counterwithin{equation}{section}

Since we computed the abelian groups $E_{p, p}^1$, $E_{p-1, p}^1$ and the differential $d^1 \colon E_{p, p}^1 \to E_{p-1, p}^1$ of the spectral sequence associated to the cosimplicial model $\Embcos^{\bullet}$ in the previous section, we would also know $E_{p, p}^2$.
In this section we give a combinatorial interpretation of~$E_{p, p}^1$ and $E_{p-1, p}^1$ and the differentials $d^1$, based on the calculation of Proposition~\ref{prop:e1page} and Theorem~\ref{prop:d1simplified}. 
As a corollary, we obtain a graphic interpretation of the groups $E_{p, p}^2$.
From these interpretations we will see that this spectral sequence relates closely to the theory of Vassiliev invariants.

\begin{definition}\label{def:totalordutg}
  A \emph{unitrivalent graph} $\Gamma$ is a graph whose nodes have only degree $1$~or~$3$, together with a cyclic order on the edges at each node.
  We call the nodes of degree $1$~\emph{leaves} and nodes of degree $3$ \emph{trivalent nodes}.
  When $\Gamma$ has $n$ leaves, we define a \emph{labelling} (or total ordering) on $\Gamma$ to be a bijection of the set $\{1, 2, \dots, n\}$ to the set of leaves.
  Denote by $\mathrm{UTG}$ the collection of labelled unitrivalent graphs.
  We define the \emph{degree} of $\Gamma$ as the number of nodes divided by $2$.
\end{definition}

When we draw a labelled unitrivalent graph, we place the leaves on an oriented line, ordered according to the labelling.
Unless explicitly mentioned, the cyclic orders of the trivalent nodes are given counterclockwise.
See Figure~\ref{fig:unitrigraph} for an example of labelled unitrivalent graph.

\begin{figure}[!ht]
  \includegraphics[width=\textwidth]{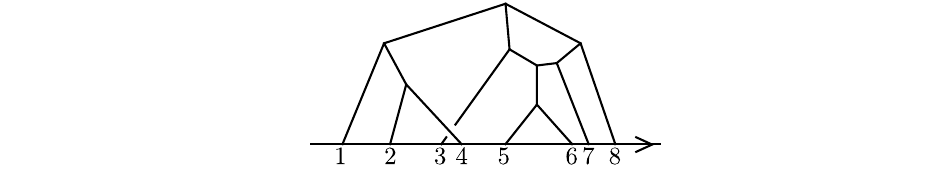}
  \caption{A unitrivalent graph of degree 8. The arrow is not part of the graph.}
  \label{fig:unitrigraph}
\end{figure}

\begin{definition}\label{def:asihx}
  We define the following relations on $\ZZ[\mathrm{UTG}]$:
  \begin{enumerate}
    \item Two labelled unitrivalent graphs $\Gamma_1$ and $-\Gamma_2$ are \emph{AS-related} if $\Gamma_1$ and $\Gamma_2$ are the same up to the cyclic order at one node.
      This is depicted in Figure~\ref{fig:asequi}.
    \item Let $\Gamma$ be a labelled unitrivalent graph.
      Let $e$ be an edge in $\Gamma$ between two trivalent nodes $v$ and $w$.
      Then $\Gamma$ is \emph{IHX-related} to the difference $\Gamma'-\Gamma''$ of the following two labelled unitrivalent graphs $\Gamma'$ and $\Gamma''$:
      Let $\{e, e_{v}' e_{v}''\}$ be the ordered set of edges at the node $v$, \ie we have $e < e_v' < e_v''$ (cyclic order).
      In the same way, let $\{e, e_{w}', e_{w}''\}$ be the ordered set of edges at the node $w$.
      The graph $\Gamma'$ arises from $\Gamma$ by deleting the edge $e$ and the nodes $v$ and $w$ of $\Gamma$, and adding an edge $e'$ and two trivalent nodes $v'$ and $w'$ such that the ordered set of edges at $v'$ are $\{e', e_v'',e_w'\}$ and the ordered set of edges at $w'$ are $\{e', e_w'', e_v'\}$.
      The unitrivalent graph $\Gamma''$ is constructed in a similar fashion.
      For $\Gamma''$ the ordered set of edges at the nodes $v'$ is $\{e' < e_v'' < e_w''\}$ and at the nodes $w'$ is $\{e' < e_w' < e_v'\}$.
      This is depicted in Figure~\ref{fig:ihxequi}.
  \end{enumerate}
\end{definition}

\begin{figure}[H]
\includegraphics[width=0.9\textwidth]{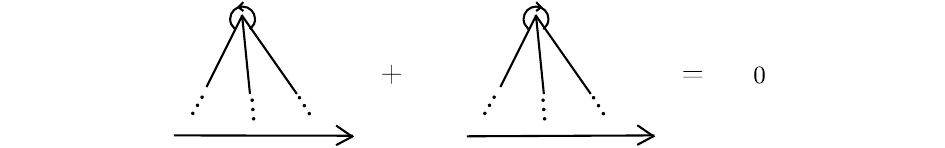}
\caption{A visualisation of the AS-relation.}
\label{fig:asequi}
\end{figure}

\begin{figure}[H]
\includegraphics[width=\textwidth]{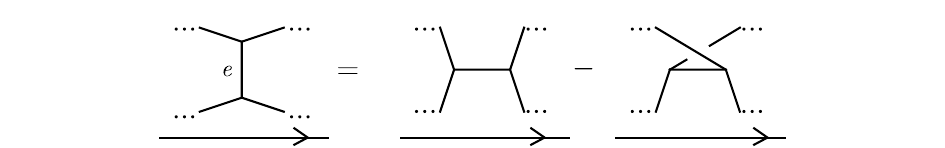}
\caption{A visualisation of the IHX-relation.}
\label{fig:ihxequi}
\end{figure}

\begin{definition}\label{def:Tcap}
  Denote by $\mathrm{UTT}_p$ the set of labelled unitrivalent trees of degree $p$.
  We denote by~$\Tca_p$ the abelian group generated by $\mathrm{UTT}_p$ modulo AS- and IHX-relations, \ie
  \[
    \Tca_p \coloneqq \quot{\ZZ \left[ \mathrm{UTT}_{p} \right]}{\sim_{\textnormal{AS}}, \sim_{\textnormal{IHX}}}.
  \]
\end{definition}

\begin{construction}\label{constr:eppcomb}
  For $p \geq 2$, denote by $T_{p}$ the set of iterated Whitehead products of the elements $x_{ip}$ with $ 1 \leq i < p$ such that $x_{ip}$ appears at most once in a given iterated Whitehead product.
  We are going to construct a one-to-one correspondence
  \[
    T_p \underset{\Phi_T}{\overset{\Psi_T}{\rightleftarrows}} \left\{\begin{aligned}
    &\text{labelled unitrivalent tree of degree at most $p-1$} \\ 
    &\text{together with a monotone bijection of their labelling}\\
    &\text{with a subset of $\{1, \dots, p\}$ containing $p$}
    \end{aligned}\right\}\!.
  \]
  
  Define the \emph{length} of $\tau \in T_p$ to be the total number of occurrences of $x_{i,p}$ with $i = 1, \dots, p-1$ in $\tau$.
  We will define $\Psi_{T}$ inductively on the length $n$ of $\tau$.
  Define $\Psi_T(x_{ip})$ to be the degree $1$ labelled unitrivalent tree consisting of two nodes labelled by $i$ and $p$ and an edge connecting them.
  Assume that for all $\tau_{k} \in T_p$ of length $k$ with $k \leq n-1 < p$, we have that $\Psi_T(\tau_k)$ is a degree $k-1$ unitrivalent tree with labelling $L_{\tau_k} \coloneqq \{i \in \NN \mid x_{ip} \textnormal{ appears in } \tau_k\} \cup \{p\}$. 
  For a tree $\tau_n = [\tau', \tau''] \in T_p$ of length $n$, we know that $\tau'$ and $\tau''$ are elements of $T_p$ and of length smaller than $n$. 
  By induction hypothesis, both $\Gamma' \coloneqq \Psi_T(\tau')$ and $\Gamma'' \coloneqq \Psi_T(\tau'')$ have a leaf with label $p$.
  We define the labelled unitrivalent tree $\Psi_T(\tau_n)$ to be the tree that arises by joining the tree $\Gamma'$ and $\Gamma''$ at the respective leaves labelled by $p$, and joining to this joint point a new leaf labelled by $p$.
  The set of labels of $\Psi_T(\tau_n)$ is $L_{\tau'} \cup L_{\tau''}$.
  This construction is depicted in Figure~\ref{fig:Liet1t2}, where also the cyclic order at the joint node is indicated.
  
  \begin{figure}[!h]
    \includegraphics[width=\textwidth]{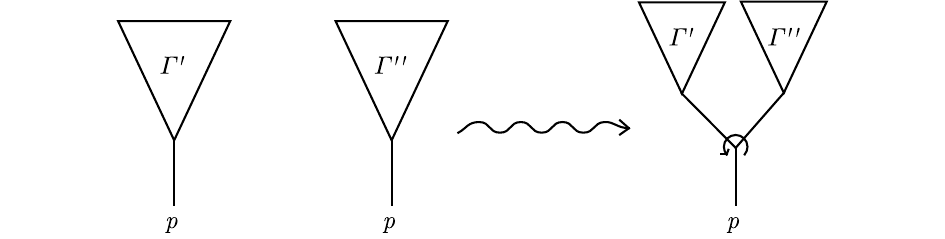}
    \caption{Unitrivalent trees $\Gamma'$, $\Gamma''$, and the unitrivalent tree obtained from $\Gamma'$ and $\Gamma''$ by joining their leaves labelled by $p$.}
    \label{fig:Liet1t2}
  \end{figure}
    
  Now we proceed to define the inverse map $\Phi_T$.
  For a labelled unitrivalent tree $\Gamma_1$ of degree $1$ with the set of labellings $\{i,p\}$, set $\Phi_T(\Gamma_1) = x_{ip}$.  
  Assume that we have already defined $\Phi_T$ for unitrivalent trees of degree smaller than $n$ with $n < p-1$.
  Every unitrivalent tree $\Gamma_n$ of degree $n$ can be depicted as in Figure~\ref{fig:Liet1t2}.
  Then define $\Phi_T(\Gamma_n) \coloneqq [\Phi_T(\Gamma'), \Phi_T(\Gamma'')]$, where $\Gamma'$ and $\Gamma''$ are the trees depicted in Figure~\ref{fig:Liet1t2}. 
  By construction, the two maps $\Psi_T$ and $\Phi_T$ are inverse to each other.
\end{construction}

\begin{proposition}\label{prop:eppcomb}
  Recall the group $E_{p,p}^1 \cong \bigoplus_{H}\ZZ$ from Proposition~\ref{prop:e1page}. 
  For $p \geq 2$, the construction above induces an isomorphism $E_{p,p}^1 \cong \Tca_{p-1}$ of groups.
\end{proposition}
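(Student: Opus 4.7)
The plan is to define the map $\bar\Psi \colon E_{p,p}^1 \to \Tca_{p-1}$ on the free basis $H$ (supplied by Proposition~\ref{prop:e1page}(ii)) by $\bar\Psi(h) = [\Psi_T(h)]$, extended $\ZZ$-linearly, and to prove its bijectivity by combining a tree-side surjectivity argument with a rank comparison.

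For surjectivity, I would show that every labelled unitrivalent tree $\Gamma \in \mathrm{UTT}_{p-1}$ is congruent modulo AS and IHX to a $\ZZ$-linear combination of the trees $\Psi_T(h)$ with $h \in H$. The strategy is to root $\Gamma$ at its unique $p$-labelled leaf, producing a rooted planar binary tree on the leaves $\{1,\dots,p-1\}$; then to apply AS at trivalent nodes (to adjust cyclic orientations) and IHX at internal edges (to rebracket) in order to bring the underlying bracket structure into the Hall-normal form which, under the bijection $\Psi_T$, characterises precisely the basic products in $H$. This is a tree-theoretic adaptation of the standard Hall-basis normalisation algorithm for the free Lie algebra.

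To conclude, I would compare ranks. By Hilton's theorem, as used in the proof of Proposition~\ref{prop:e1page}(ii), $|H|$ equals the number of Hall words of length $p-1$ in $p-1$ distinct letters, namely $(p-2)!$; on the other hand, the group $\Tca_{p-1}$ is free abelian of the same rank $(p-2)!$, a classical fact about labelled unitrivalent trees modulo AS and IHX (\cf\ \cite{Con08}). Since $\bar\Psi$ is a surjection between free abelian groups of the same finite rank, and $E_{p,p}^1$ is torsion-free, its kernel has rank $0$ and hence is trivial, so $\bar\Psi$ is an isomorphism.

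The main obstacle is the sign tracking during the surjectivity argument: the AS relation always contributes a sign of $-1$, whereas Whitehead antisymmetry (Proposition~\ref{prop:whiteheadprod}(ii)) produces the parity-dependent sign $(-1)^{(a+1)(b+1)}$. These conventions do not match term-by-term, so the normalisation cannot be lifted bracket-by-bracket through $\Psi_T$; instead, it must be carried out intrinsically on the tree side using only AS and IHX, with the basic-product presentation recovered at the end via the bijection $\Psi_T|_H$.
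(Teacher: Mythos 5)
Your proposal is correct in outline, but it takes a genuinely different route from the paper, and the difference is worth spelling out. The paper first presents $E_{p,p}^1$ as $\ZZ[J_p]$ modulo antisymmetry and the Jacobi identity (Lemma~\ref{lem:eppalter}) and then constructs \emph{explicit mutually inverse} homomorphisms $\overline{\psi}_T$ and $\overline{\phi}_T$ between this presentation and $\quot{\ZZ[\mathrm{UTT}_{p-1}]}{\sim_{\textnormal{AS}},\sim_{\textnormal{IHX}}}$, inserting the correction sign $(-1)^{\#L_1+\#(L_1\times_{>}L_2)}$ precisely so that Whitehead antisymmetry goes to the AS-relation and the Jacobi identity to the IHX-relation — the sign mismatch you correctly identify is resolved there by explicit bookkeeping rather than avoided. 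You instead define the map only on the free basis $H$ (so well-definedness is automatic), prove surjectivity by Hall normalisation carried out intrinsically on the tree side, and deduce injectivity from a rank count. This is shorter, but it shifts the entire burden of injectivity onto the assertion that $\Tca_{p-1}$ is free abelian of rank $(p-2)!$ \emph{integrally}. That statement is true, but its standard proof is exactly the construction of a well-defined map from $\Tca_{p-1}$ onto the multilinear part of the free Lie ring respecting signs — i.e., the paper's $\overline{\phi}_T$ — so you have outsourced rather than eliminated the sign analysis; and citing \cite{Con08} for it is circular here, since the proposition under discussion is the paper's elaboration of \cite[Proposition~4.7]{Con08}. (Over $\QQ$ the rank is classical, but torsion-freeness of $\Tca_{p-1}$ is the nontrivial integral input.) A further practical point in the paper's favour: the explicit signed inverse $\overline{\phi}_T$ is reused later (notably in the proof of Theorem~\ref{thm:d1comb}, which manipulates $\Psi_T\circ d^1\circ\Phi_D$ and tracks the parities $\epsilon_k^i$, $\epsilon_p^i$), so the sign computation is not wasted effort. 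If you supply an independent, genuinely external reference for the integral freeness of the multilinear tree group, your argument closes; otherwise you should construct $\overline{\phi}_T$ and verify the two compatibilities, at which point you have essentially reproduced the paper's proof.
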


\begin{lemma}\label{lem:eppalter}
  Let $J_p \subseteq T_p$ be the set of iterated Whitehead products of the elements $x_{ip}$ with $i = 1, \dots, p-1$ such that each $x_{ip}$ appears exactly once in an iterated Whitehead product.
  Then we have
  \[
    E_{p,p}^1 \cong \quot{\ZZ[J_p]}{\sim},
  \]
  where $\sim$ denotes the antisymmetry and Jacobi identity relations from Proposition~\ref{prop:whiteheadprod}.
\end{lemma}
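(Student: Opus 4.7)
The plan is to build the natural evaluation map $\bar\phi \colon \quot{\ZZ[J_p]}{\sim} \to E_{p,p}^1$ and prove it is an isomorphism by comparing it with the canonical map from $\ZZ[H]$. First I would define $\phi \colon \ZZ[J_p] \to \pi_{p}(\Conf_{p}(\RR^2 \times \udisc^1))$ by sending each formal iterated bracket to its actual Whitehead product, extended $\ZZ$-linearly. By Proposition~\ref{prop:e1page}.\rom{2}, the image sits inside $E_{p,p}^1$, since every element of $J_p$ is a bracket in which each of the $p-1$ generators $x_{ip}$ appears exactly once. Because the Whitehead bracket on $\pi_{\ast}$ satisfies the graded antisymmetry and Jacobi identity of Proposition~\ref{prop:whiteheadprod}, the map $\phi$ descends to $\bar\phi \colon \quot{\ZZ[J_p]}{\sim} \to E_{p,p}^1$.

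Next I would compare $\bar\phi$ with the canonical map $\iota \colon \ZZ[H] \to \quot{\ZZ[J_p]}{\sim}$ induced by the inclusion of basic products $H \subseteq J_p$. Proposition~\ref{prop:e1page}.\rom{2} identifies the Whitehead products of the elements of $H$ with a $\ZZ$-basis of $E_{p,p}^1$, so the composite $\bar\phi \circ \iota \colon \ZZ[H] \to E_{p,p}^1 \cong \ZZ[H]$ is the identity. Hence $\iota$ is split-injective and $\bar\phi$ is surjective. It remains to show that $\iota$ is itself surjective; once this is established, $\iota$ is an isomorphism and therefore so is $\bar\phi$.

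Surjectivity of $\iota$ amounts to the statement that every element of $J_p$ is equivalent modulo $\sim$ to a $\ZZ$-linear combination of Hall basic products in $H$. This is the classical ``collection'' procedure from the theory of free (graded) Lie algebras, specialised to the multilinear component in which each of the $p-1$ generators appears exactly once. The argument proceeds by induction on a combinatorial complexity measure --- for instance, the lexicographic distance to basic form --- reducing any non-basic bracket one step closer to basic form via a single application of antisymmetry or Jacobi, with the process clearly terminating. The main obstacle is careful bookkeeping of the graded signs coming from Proposition~\ref{prop:whiteheadprod}; however, because all generators $x_{ip}$ sit in the same dimension $\pi_{2}$, the exponents $(a+1)(b+1)$ and $c(a+1)$ appearing in the antisymmetry and Jacobi relations depend only on the weights of the sub-brackets, so the standard Hall-basis reduction carries over with routine sign adjustments.
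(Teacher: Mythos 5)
Your proposal is correct and follows essentially the same route as the paper: both reduce the statement to the Hall collection process (the paper cites \cite[Theorem~3.1]{Hal50}), which shows that every element of $J_p$ is equivalent modulo antisymmetry and the Jacobi identity to a linear combination of the basic products in $H$, whose Whitehead products form a basis of $E_{p,p}^1$ by Proposition~\ref{prop:e1page}. Your version is slightly more careful than the paper's in making the injectivity explicit via the split $\bar\phi \circ \iota = \id_{\ZZ[H]}$, but this is a refinement of detail rather than a different argument.
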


\begin{proof}
  Recall $H$ from Proposition~\ref{prop:e1page}, and note that $H \subseteq J_p \subseteq E_{p,p}^1$. 
  Thus any element of $ J_p \setminus H$ can be written as linear combination of elements of $H$. 
  Furthermore, this linear combination is produced by applying the Jacobi identity and antisymmetry relation to the element, \cf \cite[Theorem~3.1]{Hal50}. 
  As a result, we obtain the desired a group isomorphism~$E_{p,p}^1 \cong \quot{\ZZ[J_p]}{\sim}$.
\end{proof}

\begin{proof}[Proof of Proposition~\ref{prop:eppcomb}]  
  We are going to define group homomorphisms 
  \[
    \ZZ[J_p] \underset{\phi_T}{\overset{\psi_T}{\rightleftarrows}}\ZZ[\textnormal{UTT}_{p-1}],
  \]
  such that the induced map $\overline{\psi}_T$ and $\overline{\phi}_T$ on the quotients $E_{p,p}^1$ and $\Tca_{p-1}$ are inverse to each other.
  We~will define our morphisms on generators and extend linearly to the whole group.
  
  For $p = 2$, define $\psi_T(x_{12}) = \Gamma$ and $\phi_T(\Gamma) = x_{12}$, where $\Gamma$ is the labelled unitrivalent tree of degree $1$ and with labelling set $\{1,2\}$. 
  There is no relation to consider when passing to the quotients $E_{2,2}^{1}$ and $\Tca_{1}$.
  Thus $\bar{\psi}_T$ and $\bar{\phi}_T$ are inverse to each other by definition.
  
  For $p \geq 3$ and $v = [v_1, v_2] \in J_p$, define 
  \[
    \psi_T([v_1,v_2]) = (-1)^{\#{L_{1}} + \# \left( L_1 \times_{>} L_2 \right)}\Psi_T([v_1,v_2])
  \]
  where\footnote{Note that $L_i$ is the set of labels of the tree $\Psi_T(v_i)$, and thus $\# L_i$ is the number of leaves of this tree.} $L_i \coloneqq \{j \in \NN \mid x_{j p} \textnormal{ appears in } v_i\} \cup \{p\}$ and 
  \[ 
  L_1 \times_{>} L_2 \coloneqq \{(a, b) \in L_{1} \times L_{2} \mid a > b,\ \text{and}\ a,b \not\in L_1 \cap L_2 \}.
  \]
    
  To see that the anti-symmetry of the Whitehead product corresponds to the AS-relation, we look at $\psi_T([v_1,v_2] + (-1)^{ \# ( L_1 \times L_2) }[v_2, v_1])$ which equals 
  \begin{equation}\label{eq:antisymm}
    (-1)^{\#{L_{1}}+\# \left( L_1 \times_{>} L_2 \right)} \Psi_T([v_1, v_2]) + (-1)^{\# \left(L_1 \times L_2\right) + \# \left( L_2 \times_{>} L_1 \right) + \# {L_{2}}} \Psi_T([v_2,v_1]). 
  \end{equation}
  Recall from Construction~\ref{constr:eppcomb} that the only difference between the tree $\Psi_{T}([v_1,v_2])$ and the tree $\Psi_{T}([v_2, v_1])$ is the cyclic order at the trivalent node which is adjacent to the leaf with label~$p$, \ie $\Psi_{T}([v_1,v_2]) \sim_{\textnormal{AS}} -\Psi_{T}([v_2, v_1])$.
  Note that the sum of signs in Equation~$\ref{eq:antisymm}$ is 
  \begin{multline*}
    \#{L_1} + \# \left(L_1 \times_{>} L_2 \right)+ \# \left( L_1 \times L_2 \right) + \# \left( L_2 \times_{>} L_1 \right) + \#{L_2} \\=
    \#{L_1} + \# \left( L_1 \times L_2 \right) + \#{L_2} + (\#{L_1}-1)(\#{L_2}-1),
  \end{multline*}
  because $\# (L_1 \times_{>} L_2) + \# \left( L_2 \times_{>} L_1 \right) = (\#{L_1}-1)(\#{L_2}-1)$. 
  Thus we obtain that the element in Formula~\ref{eq:antisymm} AS-related to 0.
  
  As for the Jacobi identity, take $v = [v_1, [v_2, v_3]] \in J_p$ with $v_i \in \pi_{l_i}(\Conf_p(\RR^2 \times \udisc^1))$ where $l_i = \#{L_i} $. 
  Then 
  \begin{multline}\label{eq:jacobi}
    \psi_T\left((-1)^{(l_3-1)l_1}[v_1,[v_2,v_3]] + (-1)^{(l_1-1)l_2}[v_2, [v_3, v_1]] + (-1)^{(l_2-1)l_3}[v_3, [v_1,v_2]]\right) \\
    =  (-1)^{\epsilon_1}\Psi_T([v_1,[v_2,v_3]]) + (-1)^{\epsilon_2}\Psi_T([v_2, [v_3, v_1]]) + (-1)^{\epsilon_3}\Psi_{T}([v_3, [v_1,v_2]]),
  \end{multline}
  where $\epsilon_1$, $\epsilon_2$ and $\epsilon_3$ are the suitable signs.
  Again by Construction~\ref{constr:eppcomb}, we have that 
  \[
    (-1)^{\epsilon_1}[\Psi_T([v_1,[v_2,v_3]])] + (-1)^{\epsilon_2}[\Psi_T([v_2, [v_3, v_1]])] + (-1)^{\epsilon_3}[\Psi_{T}([v_3, [v_1,v_2]])] 
  \]
  is IHX-related to $0$, and thus the element in Equation~\ref{eq:jacobi} is IHX-related to $0$.
  Therefore,~$\bar{\psi}_T$ is well-defined.
  
  Let $\Gamma_{p-1}$ be a labelled unitrivalent tree of degree $p-1$, drawn as in Figure~\ref{fig:Liet1t2}, define
  \[
    \phi_T(\Gamma_{p-1}) =(-1)^{\#{L_1} + \# \left( L_1 \times_{>} L_2 \right)} \Phi_T(\Gamma_{p-1}),
  \]
  where $L_1$ and $L_2$ is the set of labels of $\Gamma'$ and $\Gamma''$ respectively.
  
  Similar to the discussion of $\overline{\psi}_T$, one can show that $\overline{\phi}_T$ is well-defined. 
  Therefore, the maps $\overline{\psi}_T$ and $\overline{\phi}_T$ are inverses to each other by construction.
\end{proof}

\begin{remark}
  Based on the methods in \cite[Section 4]{Con08}, Construction~\ref{constr:eppcomb} and Proposition~\ref{prop:eppcomb} are generalisations of \cite[Definition 4.5, Proposition 4.6]{Con08} and \cite[Proposition 4.7]{Con08} respectively. 
  Our proofs also add missing details of \cite[Proposition~4.7]{Con08}.
\end{remark}

\begin{definition}\label{def:ijmarkedFdiag}
  Let $i,j \in \NN$ with $i \neq j$.
  An \emph{$(i,j)$-marked unitrivalent graph} $\Gamma_{ij}$ is a unitrivalent graph of degree $j$ together with two marked nodes $v_i$ and $v_j$ that satisfy the following properties:
  \begin{enumerate}
    \item The underlying graph of $\Gamma$ is connected and has exactly one simple cycle\footnote{A simple cycle is defined as a loop in the graph with no repetitions of nodes and edges allowed, other than the repetition of the starting and ending nodes.}.
    \item The two marked nodes $v_i$ and $v_j$ are adjacent to the leaf with label $i$ and $j$, respectively, and lie on the simple cycle.
  \end{enumerate}
  Denote by $\textnormal{UTG}_{i,j}$ the set of $(i,j)$-marked unitrivalent graphs.
\end{definition}

In Figure \ref{fig:unitrigraph}, we can mark the nodes adjacent to the leaf with lable 3 and 8, respectively, and obtain a (3, 8)-marked unitrivalent graph, see Figure \ref{fig:combi-feynmanloop}.

\begin{figure}[!ht]
    \includegraphics[width=\textwidth]{graphics/feynmanloop}
    \caption{A $(3,8)$-marked unitrivalent graph. The blacks dots indicate the marked nodes. The arrow is not part of the graph.}
    \label{fig:combi-feynmanloop}
\end{figure}

\begin{definition}
  For $p \geq 3$, define $\Dca_p$ to be the abelian group generated by the collection of $(i,p)$-marked unitrivalent graphs with $1 \leq i < p$, modulo AS- and IHX\textsuperscript{sep}-relations, \ie 
  \[
    \Dca_p \coloneqq \quot{\ZZ[\cup_{i =1}^{p-1} \mathrm{UTG}_{i,p}]}{\sim_{\textnormal{AS}}, \sim_{\textnormal{IHX}^{\mathrm{sep}}}},
  \]
  where the IHX\textsuperscript{sep}-relation is the usual IHX-relation, except that the edge $e$, which appears in Definition~\ref{def:asihx}.\rom{2}, is not allowed to be an edge adjacent to the marked nodes. 
\end{definition}

\begin{construction}\label{constr:ep-1pcomb}
  Recall from Proposition~\ref{prop:separateindices} the definition of $D^{\mathrm{sep}}_{p}$ for $p \geq 4$. 
  We are going to construct a one-to-one correspondence
  \[
    D^{\mathrm{sep}}_{p} \underset{\Phi_{D}}{\overset{\Psi_D}{\rightleftarrows}} \bigcup_{i = 1}^{p-2} \mathrm{UTG}_{k,p-1}.
  \]
  We use $\Psi_{T}$ and $\Phi_{T}$ from Construction~\ref{constr:eppcomb} to define $\Psi_{D}$ and $\Phi_{D}$.
  
  For $w=[c_1, c_2] \in D^{\mathrm{sep}}_{p}$, say with repeated occurrence of $x_{k,p-1}$, apply $\Psi_T$ to $c_1$ and to $c_2$.
  We obtain the following two labelled unitrivalent trees $\Gamma_1 \coloneqq \Psi_{T}(c_1)$ and $\Gamma_2 \coloneqq  \Psi_T({c_2})$.
  \[
    \includegraphics[width=\textwidth]{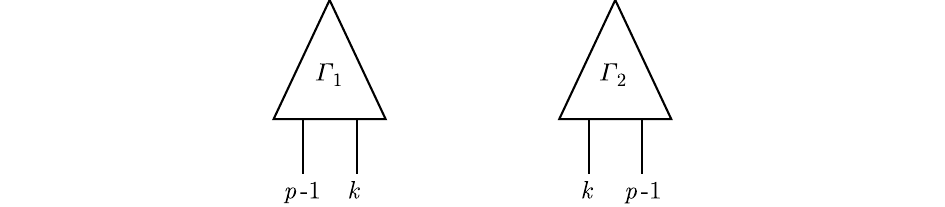}
  \]
  
  Define $\Psi_D(w)$ to be the following $(k,p-1)$-marked unitrivalent graph.
  \begin{figure}
    \centering
    \includegraphics[width=\textwidth]{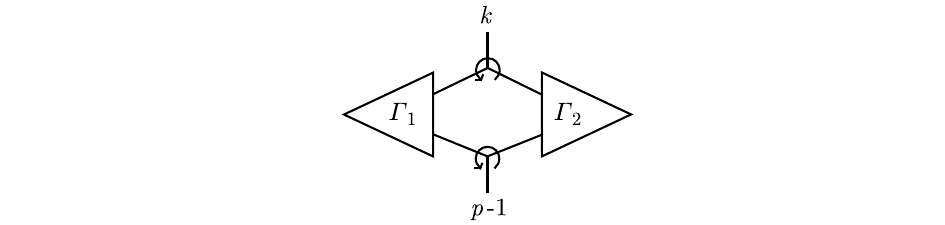}
    \caption{A visualisation of the labelled unitrivalent graph $\Psi_D(w)$.}
    \label{fig:Liec1c2}
  \end{figure}

  For a $(k,p-1)$-marked unitrivalent diagram $\Gamma_{k,p-1}$, we can draw $\Gamma_{k,p-1}$ as in Figure~\ref{fig:Liec1c2}. 
  Then define $\Phi_D(\Gamma_{k,p-1}) \coloneqq [\Phi_T(\Gamma_{1}), \Phi_T(\Gamma_{2})]$.
  By construction the maps $\Psi_D$ and $\Phi_D$ are inverse to each other, since $\Phi_{T}$ and $\Psi_{T}$ are each other's inverse, \cf~Construction~\ref{constr:eppcomb}.
\end{construction}

\begin{proposition}
  Let $w = \left[ [c_1, [c_2, c_3]], c_4 \right]$ be an iterated Whitehead product in $D^{\mathrm{sep}}_p$, say with $c_i \in \pi_{l_i}(\Conf_{p-1}\left(\RR^2 \times \udisc^1\right))$. 
  We have in $\ZZ[D^{\mathrm{sep}}_{p}]$ the \emph{separated Jacobi identity}, \ie
  \[
    (-1)^{(l_3-1)l_1}\big[ [ c_1,[c_2,c_3] ], c_4 \big] + (-1)^{(l_1-1)l_2}\big[ [ c_2,[c_3,c_1] ], c_4 \big] + (-1)^{(l_2-1)l_3}\big[ [ c_3,[c_1,c_2] ], c_4 \big] = 0.
  \]  
\end{proposition}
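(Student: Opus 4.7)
The statement is essentially the usual graded Jacobi identity for the Whitehead product (Proposition~\ref{prop:whiteheadprod}.iii), applied in a constrained setting where the outer bracket with $c_4$ is fixed. The plan is therefore short and proceeds by reduction to Proposition~\ref{prop:whiteheadprod}.

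First, I apply Proposition~\ref{prop:whiteheadprod}.iii directly to the triple $(c_1, c_2, c_3)$, viewed as elements of the homotopy group $\pi_{\ast}(\Conf_{p-1}(\RR^2 \times \udisc^1))$. This gives the standard graded Jacobi identity
\[
(-1)^{(l_3-1)l_1}[c_1,[c_2,c_3]] + (-1)^{(l_1-1)l_2}[c_2,[c_3,c_1]] + (-1)^{(l_2-1)l_3}[c_3,[c_1,c_2]] = 0.
\]
Next, using bilinearity of the Whitehead product (Proposition~\ref{prop:whiteheadprod}.i), I pair both sides on the right with $c_4$. Bilinearity preserves the equality, yielding precisely the identity in the statement.

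The remaining task is to verify that each of the three summands lies in $D^{\mathrm{sep}}_p$, so that the identity indeed makes sense in $\ZZ[D^{\mathrm{sep}}_p]$. By hypothesis $w = [[c_1,[c_2,c_3]], c_4] \in D^{\mathrm{sep}}_p$, so the repeated index $k(w)$ occurs once in $c_4$ and once inside the inner triple $[c_1,[c_2,c_3]]$; say it occurs inside $c_m$ for some $m \in \{1,2,3\}$. In each of the cyclic rearrangements produced by the Jacobi identity, the index $k(w)$ still sits inside the same factor $c_m$ (only the inner bracketing is permuted) and inside $c_4$, while the outer bracketing remains of the form $[-, c_4]$. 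Hence the sets $I$ and $J$ of indices appearing on the left and the right of the outermost bracket are unchanged under the three rearrangements, and each summand satisfies the conditions of Proposition~\ref{prop:separateindices} defining $D^{\mathrm{sep}}_p$.

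The only mild subtlety is a matter of parsing: the identity lives \emph{a priori} in the homotopy group, and the content of the proposition is that it can be expressed as a relation among generators of $\ZZ[D^{\mathrm{sep}}_p]$. Once the membership check above is made, no argument beyond the standard Jacobi identity and bilinearity is required, so I do not expect any genuine obstacle.
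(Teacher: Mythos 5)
Your proposal is correct and follows essentially the same route as the paper, whose entire proof is the one-line instruction to apply the Jacobi identity to the inner triple $[c_1,[c_2,c_3]]$. The extra details you supply — additivity of $[\blank, c_4]$ in the first slot, the sign bookkeeping matching Proposition~\ref{prop:whiteheadprod}.iii, and the check that the cyclic rearrangements stay in $D^{\mathrm{sep}}_p$ because the index sets on either side of the outermost bracket are unchanged — are exactly the points the paper leaves implicit, and the last one is what distinguishes this \emph{separated} identity from the full Jacobi identity, which the paper notes is not well defined in $\ZZ[D^{\mathrm{sep}}_p]$.
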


\begin{proof}
  Apply the Jacobi identity to $\left[c_1, [c_2, c_3] \right]$.
\end{proof}

\begin{remark}
  Note that the usual Jacobi identity is not well-defined in $\ZZ[D^{\mathrm{sep}}_{p}]$.
\end{remark}

\begin{proposition}\label{prop:ep-1pcomb}
  Recall the group $\quot{E_{p-1,p}^1}{\mathrm{tors}}$ from Proposition~\ref{prop:separateindices}.
  For $p \geq 4$, the construction above induces an isomorphism $\quot{E_{p-1,p}^1}{\mathrm{tors}} \cong \Dca_{p-1}$ of groups.
\end{proposition}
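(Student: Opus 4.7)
The plan is to mirror the proof of Proposition~\ref{prop:eppcomb}. I will define group homomorphisms
\[
  \ZZ[D^{\mathrm{sep}}_p] \underset{\phi_D}{\overset{\psi_D}{\rightleftarrows}} \ZZ\bigl[\cup_{i=1}^{p-2}\mathrm{UTG}_{i,p-1}\bigr]
\]
using the bijection from Construction~\ref{constr:ep-1pcomb}, decorated by a sign built exactly as in the proof of Proposition~\ref{prop:eppcomb}. Concretely, for $w=[c_1,c_2]\in D^{\mathrm{sep}}_p$ with $c_i$ an iterated Whitehead product of $\{x_{j,p-1}\mid j\in L_i\}$, I set $\psi_D(w)=(-1)^{\#L_1+\#(L_1\times_{>}L_2)}\Psi_D(w)$, and dually for $\phi_D$. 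Once these descend to the stated quotients, the fact that $\overline{\psi_D}$ and $\overline{\phi_D}$ are mutually inverse will follow immediately from $\Psi_D$ and $\Phi_D$ being mutually inverse.

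First I would prove the analog of Lemma~\ref{lem:eppalter}: the torsion-free quotient $\quot{E_{p-1,p}^1}{\mathrm{tors}}$ is isomorphic to $\ZZ[D^{\mathrm{sep}}_p]$ modulo the outer antisymmetry of the bracket $[c_1,c_2]$, the antisymmetry of all inner brackets, and the separated Jacobi identity. The argument combines Proposition~\ref{prop:separateindices}, which shows that $D^{\mathrm{sep}}_p$ spans $\quot{E_{p-1,p}^1}{\mathrm{tors}}$, with the Hall-basis analysis already used in Lemma~\ref{lem:eppalter}: every relation between iterated Whitehead products in $\pi_p(\Conf_{p-1}(\RR^2\times\udisc^1))$ arises from antisymmetry and Jacobi, and the reduction in the proof of Proposition~\ref{prop:separateindices} shows that if both sides of such a relation already lie in $\ZZ[D^{\mathrm{sep}}_p]$, then the required rewriting can be performed using only inner antisymmetry plus the separated Jacobi identity.

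Next I would verify that $\psi_D$ descends to $\Dca_{p-1}$. Antisymmetry of the outer bracket $[c_1,c_2]$ flips the cyclic order at exactly the trivalent node where $\Gamma_1$ and $\Gamma_2$ are joined (see Figure~\ref{fig:Liec1c2}), so by the sign bookkeeping already carried out in the proof of Proposition~\ref{prop:eppcomb} it maps to the AS-relation at that node; antisymmetry of inner brackets corresponds similarly to AS at interior nodes of $\Gamma_1$ or $\Gamma_2$. For the separated Jacobi identity, the inner bracket being rearranged lies strictly inside $c_1$ (or inside $c_2$), so the corresponding edge in $\Psi_D(w)$ is an edge of $\Gamma_1$ (respectively $\Gamma_2$) not adjacent to either marked node $v_i$, $v_{p-1}$; this is exactly the hypothesis of the IHX\textsuperscript{sep}-relation, and the sign computation is identical to the one producing Equation~(\ref{eq:jacobi}). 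The analogous verification for $\phi_D$ works the same way because each AS- or IHX\textsuperscript{sep}-relation on the graph side is located away from the joining node and may therefore be translated back into an inner-antisymmetry or separated-Jacobi relation on the algebraic side.

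The main obstacle is the analog of Lemma~\ref{lem:eppalter}. As the remark preceding the proposition points out, the full Jacobi identity is not available in $\ZZ[D^{\mathrm{sep}}_p]$, so one cannot just invoke Hall's theorem directly; the delicate point is to check that, when two elements of $D^{\mathrm{sep}}_p$ differ by a sequence of Jacobi and antisymmetry moves inside $\pi_p(\Conf_{p-1}(\RR^2\times\udisc^1))$, these moves can be reorganised so that every intermediate expression either remains in $\ZZ[D^{\mathrm{sep}}_p]$ or is immediately re-separated via the inductive procedure of Proposition~\ref{prop:separateindices}, ensuring that the relation can in fact be generated by the separated Jacobi identity alone. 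Once this is established, both $\overline{\psi_D}$ and $\overline{\phi_D}$ are well-defined, are inverses by the bijectivity of $\Psi_D$ and $\Phi_D$, and yield the desired isomorphism $\quot{E_{p-1,p}^1}{\mathrm{tors}}\cong\Dca_{p-1}$.
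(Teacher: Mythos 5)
Your proposal follows essentially the same route as the paper: the paper likewise defines $\psi_D$ and $\phi_D$ as sign-decorated versions of $\Psi_D$ and $\Phi_D$ (its sign is $(-1)^{\#(L_1\times_{>}L_2)}$, without the extra $\#L_1$ you carry over from $\psi_T$), states the analogue of Lemma~\ref{lem:eppalter} as Lemma~\ref{lem:ep-1pcomb}, and checks that antisymmetry matches the AS-relation and the separated Jacobi identity matches the IHX\textsuperscript{sep}-relation. The delicate point you isolate --- that the full Jacobi identity is unavailable in $\ZZ[D^{\mathrm{sep}}_p]$, so the presentation of $\quot{E_{p-1,p}^1}{\mathrm{tors}}$ by inner antisymmetry and separated Jacobi alone needs a genuine argument --- is exactly the step the paper leaves implicit, so your account is, if anything, more candid about where the work lies.
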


Similar to Lemma~\ref{lem:eppalter}, we obtain the following presentation of $\quot{E_{p-1,p}^1}{\mathrm{tors}}$.

\begin{lemma}\label{lem:ep-1pcomb}
  We have 
  \[
    \quot{E_{p-1,p}^1}{\mathrm{tors}} \cong \quot{{\ZZ[D^{\mathrm{sep}}_{p}]}}{\sim},  
  \]
  where $\sim$ denotes relation induced by antisymmetry and the separated Jacobi Identity.  \qed
\end{lemma}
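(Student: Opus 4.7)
The plan is to combine Proposition~\ref{prop:separateindices} with a Hall-basis rewriting argument, paralleling the proof of Lemma~\ref{lem:eppalter}. By Proposition~\ref{prop:separateindices}, the set $D_p^{\mathrm{sep}}$ generates $\quot{E_{p-1,p}^1}{\mathrm{tors}}$, giving a surjection $\alpha \colon \ZZ[D_p^{\mathrm{sep}}] \twoheadrightarrow \quot{E_{p-1,p}^1}{\mathrm{tors}}$ that sends a formal iterated Whitehead product symbol to its actual value in $\pi_p(\Conf_{p-1}(\RR^2 \times \udisc^1))$. Both antisymmetry (Proposition~\ref{prop:whiteheadprod}) and the separated Jacobi identity (which is just the Jacobi identity applied to the inner bracket $[c_1, [c_2, c_3]]$ of $[[c_1,[c_2,c_3]],c_4]$) hold as relations in iterated Whitehead products, so $\alpha$ descends to a surjection $\bar\alpha \colon \quot{\ZZ[D_p^{\mathrm{sep}}]}{\sim} \twoheadrightarrow \quot{E_{p-1,p}^1}{\mathrm{tors}}$.

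Injectivity of $\bar\alpha$ reduces to a normal-form statement. By Proposition~\ref{prop:e1page}, the basic-product subset $F \subseteq D_p^{\mathrm{sep}}$ in which some $x_{k,p-1}$ appears twice and every other $x_{i,p-1}$ exactly once gives a free $\ZZ$-basis of $\quot{E_{p-1,p}^1}{\mathrm{tors}} \cong \ZZ^F$. If every $w \in D_p^{\mathrm{sep}}$ admits an explicit $\sim$-equivalent rewrite as an integer combination of basic products in $F$, then any kernel element of $\bar\alpha$ reduces modulo $\sim$ to a vanishing $\ZZ$-combination of $F$-elements in the free abelian group $\ZZ^F$, which forces it to be zero already modulo $\sim$; this is the mechanism that will give injectivity.

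For the rewriting, write $w = [c_1, c_2]$ with repeated index $k$, where $c_1$ and $c_2$ are iterated Whitehead products in the elements $\{x_{i,p-1}\}_{i \in I}$ and $\{x_{j,p-1}\}_{j \in J}$ respectively, for the partition $I \cap J = \{k\}$, $I \cup J = \{1, \ldots, p-2\}$ supplied by Proposition~\ref{prop:separateindices}. Applying the classical free-Lie-algebra normalization (Hall's theorem) separately inside $c_1$ and inside $c_2$ expresses each as an integer combination of basic products of the respective generators, using antisymmetry and ordinary Jacobi moves taking place strictly interior to $c_1$ (respectively $c_2$). Bilinearity of the outer bracket then rewrites $w$ itself as an integer combination of elements of $F$.

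The main obstacle is to verify that every ordinary Jacobi move internal to $c_1$ (or $c_2$) can be realized in $\quot{\ZZ[D_p^{\mathrm{sep}}]}{\sim}$ by finitely many applications of the separated Jacobi identity. I would handle this by induction on the depth at which the move occurs: at the shallowest depth, just inside the outer bracket of $c_1$, the separated Jacobi identity applies directly with $c_4 = c_2$; deeper moves reduce to shallower ones after identifying the ambient subtree whose sibling subtree plays the role of the outer partner one level up, and iterating. Crucially, the outer bracket $[c_1, c_2]$ separating the index sets $I$ and $J$ is never altered, which is exactly why each move stays inside $D_p^{\mathrm{sep}}$; under the bijection $\Psi_D$ of Construction~\ref{constr:ep-1pcomb} this is the statement that each move is an IHX relation occurring off the unique simple cycle of the associated marked unitrivalent graph, matching the IHX\textsuperscript{sep} condition on the other side of the identification.
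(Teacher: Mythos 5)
The paper itself gives essentially no proof of this lemma (it is stated with a \qed and prefaced only by ``Similar to Lemma~\ref{lem:eppalter}''), and your outline --- surjectivity of $\ZZ[D_p^{\mathrm{sep}}] \to \quot{E_{p-1,p}^1}{\mathrm{tors}}$ from Proposition~\ref{prop:separateindices}, descent to the quotient because antisymmetry and the separated Jacobi identity hold for Whitehead products, injectivity via a normal form --- has the right shape and matches what the paper intends. The gap is in the normal form you choose. Your claim that $F \subseteq D_p^{\mathrm{sep}}$ is false: $F$ consists of \emph{basic products}, and a basic product in which some $x_{k,p-1}$ occurs twice need not place the two occurrences in the two factors of its outermost bracket. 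For $p=5$ the basic product $\bigl[x_{2,4},[x_{1,4},[x_{1,4},x_{3,4}]]\bigr]$ lies in $F$, but both copies of $x_{1,4}$ sit in the second factor, so condition ii) of Proposition~\ref{prop:separateindices} fails; note the paper is careful to assert only $F \subseteq D_p$, not $F \subseteq D_p^{\mathrm{sep}}$. Consequently a rewrite of $w \in D_p^{\mathrm{sep}}$ into a $\ZZ$-combination of elements of $F$ cannot in general be carried out inside $\quot{\ZZ[D_p^{\mathrm{sep}}]}{\sim}$, and the ``vanishing combination in $\ZZ^F$'' mechanism for injectivity does not close.

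The same obstruction reappears one step later: after normalising $c_1$ and $c_2$ separately into basic products $b_1$, $b_2$ of the generators indexed by $I$ and $J$, the outer product $[b_1,b_2]$ is in general \emph{not} a basic product of the full generating set (the condition $c \leq a$ for $b_2 = [c,d]$ in Definition~\ref{def:bp} can fail), and repairing this would require a Jacobi move at the outermost bracket --- exactly the move excluded from $\sim$ because it destroys the separation of the two copies of $x_{k,p-1}$. What your strategy actually produces is a $\sim$-rewrite of $w$ into the set of separated products $[b_1,b_2]$ with each factor basic in its own alphabet; to conclude injectivity one must still show that this set (modulo the outer antisymmetry) maps to a linearly independent family in $\pi_p(\Conf_{p-1}(\RR^2 \times \udisc^1))$ of the correct cardinality, which is additional work not present in the proposal. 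Finally, your reduction of internal Jacobi moves at arbitrary depth to the stated top-level separated Jacobi identity is only sketched; if $\sim$ is read literally as generated by the displayed relation this step must be carried out (it amounts to showing the relation is stable under substitution into a fixed factor), whereas if $\sim$ is read as the induced congruence it is immediate --- either way the paragraph as written does not settle it.
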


\begin{proof}[Proof of Proposition~\ref{prop:ep-1pcomb}]
  Similar as in the proof of Proposition~\ref{prop:eppcomb}, we can define group homomorphisms
  \[
    \ZZ[D^{\mathrm{sep}}_p] \underset{\phi_D}{\overset{\psi_D}{\rightleftarrows}}\ZZ[\bigcup_{i = 1}^{p-1}\mathrm{UTG}_{i,p-1}].
  \]
  such that the induced map $\overline{\psi}_D$ and $\overline{\phi}_D$ are on the quotients are inverses to each other.
  Let $w = [c_1, c_2] \in D^{\mathrm{sep}}_{p}$, say with repeated occurrence of $x_{k,p-1}$. 
  For $i = 1, 2$, define the set of labels $L_i \coloneqq \{j \in \NN \mid x_{j,p-1} \textnormal{ appears in } c_i\} \cup \{p-1\}$.
  Define $\psi_D(w) \coloneqq (-1)^{ \# (L_1 \times_{>} L_2)} \Psi_D(w)$.
  
  Let $\Gamma_{k,p-1} \in \mathrm{UTG}_{k,p-1}$, for example drawn as in Figure~\ref{fig:Liec1c2}. 
  Denote by $L_i$ the labelling of $\Gamma_{i}$ with $i = 1, 2$. 
  Define $\phi_D(\Gamma_{k,p-1}) \coloneqq (-1)^{\# \left( L_1 \times_{>} L_2 \right)} \Phi_D(\Gamma_{k,p-1})$. 
  
  Similar as in the proof of Proposition~\ref{prop:eppcomb} we can check by explicit computation that the induced maps $\bar{\psi}_{D}$ and $\bar{\phi}_{D}$ on the quotients $\quot{E_{p-1,p}^1}{\mathrm{tors}}$ and $\Dca_{p-1}$ are well-defined.
  More precisely, antisymmetry corresponds to AS-relation and the separated Jacobi identity corresponds to IHX\textsuperscript{sep}-relation.
  By construction the maps $\overline{\psi}_D$ and $\overline{\phi}_D$ are inverse to each~other. 
\end{proof}

\begin{definition}\label{def:stu}
  Define the following relations on $\ZZ[\textnormal{UTG}]$:
  \begin{enumerate}
    \item Let $\Gamma$ be a labelled unitrivalent graph. 
      Let $e$ be the edge connecting the leaf labelled by $n$ and the adjacent trivalent node $v$.
      Then $\Gamma$ is \emph{$STU$-related} to the difference $\Gamma'-\Gamma''$ of the following two labelled unitrivalent graphs $\Gamma'$ and $\Gamma''$
      Denote by $\{e, e_1, e_2\}$ the ordered set of edges at the node $v$, \ie $e < e_1 < e_2$ (cyclic order).
      The graph $\Gamma'$ arise from $\Gamma$ the following steps:
      First, delete the edge $e$, the node $v$ and the leaf labelled by $n$. 
      Second, add two leaves, labelled by $n$ and $n + 1$ with adjacent edges $e_2$ and $e_1$ respectively. 
      Third, relabel the leaves labelled with $m$ by $m + 1$ if $m > n$.
      The graph $\Gamma''$ is constructed similar to $\Gamma'$.
      For $\Gamma''$ the adjacent edges of the leaves labelled by $n$ and $n+1$ are $e_1$ and $e_2$ respectively.
      The STU-relation is depicted in Figure~\ref{fig:stuequi}.
      \begin{figure}[!ht]
        \includegraphics[width=\textwidth]{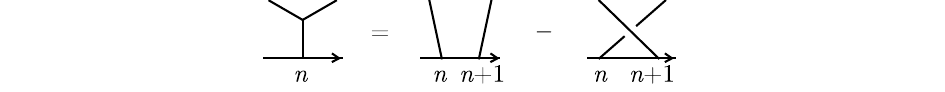}
        \caption{A visualisation of the STU-relation.}
        \label{fig:stuequi}
      \end{figure}
    \item Let $\Gamma$ be a labelled unitrivalent graph.
      Then $\Gamma_1'- \Gamma_1''$ is \emph{STU\textsuperscript{2}-related} to $\Gamma_2' - \Gamma_2''$, where $\Gamma_1'-\Gamma_1''$ is obtained by performing the STU-relation at the leaf of $\Gamma$ labelled by $n$, and $\Gamma_2'-\Gamma_2''$ is obtained by performing the STU-relation at the leaf of $\Gamma$ labelled by $m$.
      The STU\textsuperscript{2}-relation is depicted in Figure~\ref{fig:stu2equi}. 
      \begin{figure}[!ht]
        \includegraphics[width=\textwidth]{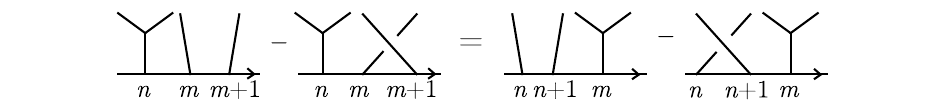}
        \caption{A visualisation of the STU\textsuperscript{2}-relation.}
        \label{fig:stu2equi}
      \end{figure}
  \end{enumerate}
\end{definition}

\begin{remark}
  Note that the STU\textsuperscript{2}-relation is finer than the STU-relation.
\end{remark}
\begin{proposition}
  The STU\textsuperscript{2}-relation is well-defined on $\ZZ[\mathrm{UTT}_{p}]$ for $p \geq 1$.
\end{proposition}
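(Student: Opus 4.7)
The plan is the following. The STU\textsuperscript{2}-relation of Definition~\ref{def:stu} is a priori defined on the larger group $\ZZ[\mathrm{UTG}]$ of all labelled unitrivalent graphs: given a graph $\Gamma$ and two leaves $n, m$ of $\Gamma$, it equates $\Gamma_1' - \Gamma_1''$ (the STU-output at leaf $n$) with $\Gamma_2' - \Gamma_2''$ (the STU-output at leaf $m$). To say it restricts to a well-defined relation on $\ZZ[\mathrm{UTT}_p]$, it suffices to check that whenever the relation is invoked, both sides actually lie in $\ZZ[\mathrm{UTT}_p]$.

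First I would analyse the topological effect of an STU-resolution. Resolving at a leaf $\ell$ deletes the adjacent trivalent vertex $v_\ell$, the edge $e$ joining them, and the leaf itself, and then grafts two new leaves onto the dangling ends of the remaining edges at $v_\ell$. This keeps the total node count unchanged and decreases the edge count by one, so the first Betti number drops by one precisely when $v_\ell$ lies on a cycle of $\Gamma$; otherwise the component containing $v_\ell$ is split into two. Hence the resolution at $\ell$ produces an element of $\ZZ[\mathrm{UTT}_p]$ if and only if $\Gamma$ has exactly one simple cycle and $v_\ell$ lies on it.

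For both resolutions at $n$ and $m$ to land in $\ZZ[\mathrm{UTT}_p]$, the graph $\Gamma$ must therefore be a one-cycle graph whose adjacent trivalent vertices $v_n, v_m$ both lie on the unique cycle --- exactly the data of an $(n, m)$-marked unitrivalent graph in the sense of Definition~\ref{def:ijmarkedFdiag}. For each such $\Gamma$ of degree $p$, the combinatorial bookkeeping already carried out in Construction~\ref{constr:ep-1pcomb} shows that the outputs $\Gamma_j' - \Gamma_j''$ are genuine integer combinations of labelled trees in $\mathrm{UTT}_p$, with compatible label-shifts at the two leaves. Consequently the STU\textsuperscript{2}-relations on $\ZZ[\mathrm{UTT}_p]$ form a well-defined subgroup, giving a well-defined equivalence relation.

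The main obstacle is the case analysis in the last step, where one must track how the cyclic orders at $v_n, v_m$ interact with the label-shifts $n \mapsto \{n, n+1\}$ and $m \mapsto \{m, m+1\}$ (together with the accompanying shift of labels greater than $n$ or $m$) to confirm that both sides of the resulting relation are labelled trees of the correct degree. The small cases $p=1,2$, where no nontrivial $(n,m)$-marked graph of degree $p$ exists, are handled by direct inspection and yield the trivial relation.
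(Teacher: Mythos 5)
Your argument is correct and follows essentially the same route as the paper's (one-sentence) proof, which likewise rests on the observation that an STU-resolution preserves the degree and connectivity and removes rather than creates simple cycles; you simply make the Betti-number bookkeeping explicit and identify the admissible inputs as the $(n,m)$-marked graphs of Definition~\ref{def:ijmarkedFdiag}.
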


\begin{proof}
  This is because the STU\textsuperscript{2}-relation does not change the connectivity and degree of the unitrivalent graphs, and it also does not add simple cycles to the graphs.
\end{proof}

Recall the computation of the differential $d^1 \colon E_{p-1,p}^1 \to E_{p,p}^1$ from Proposition~\ref{prop:d1simplified}.
Note that in the computation only $d^1|_{\quot{E_{p-1,p}^1}{\mathrm{tors}}}$ is relevant, and by abuse of notation we will write $d^1$ instead of $d^1|_{\quot{E_{p-1,p}^1}{\mathrm{tors}}}$ in the following.
By Proposition~\ref{prop:eppcomb} and Proposition~\ref{prop:ep-1pcomb} we can consider $d^1$ as a map between two groups of unitrivalent graphs as follows
\[
  \Psi_{T} \circ d^1 \circ \Phi_{D} \colon \Dca_{p-1} \to \Tca_{p-1}.
\]
By abuse of notation we will also denote this map between $\Dca_{p-1}$ and $\Tca_{p-1}$ by $d^1$.
The following proposition describes what $d^1$ means on the level of unitrivalent graphs.

\begin{theorem}\label{thm:d1comb}
  Viewed on generators, the differential $d^1$ maps a $(k,p-1)$-marked unitrivalent graph~$\Gamma_{k, p-1}$ to the linear combination $\Gamma_{p}^{1} - \Gamma_{p}^{2} - (\Gamma_{k}^{2} - \Gamma_{k}^{1})$ of labelled unitrivalent trees, where 
  \begin{enumerate}
    \item $\Gamma_{p}^1 - \Gamma_p^2$ is obtained from performing the STU-relation on $\Gamma_{k, p-1}$ at the edge connecting the leaf labelled by $p-1$ and the marked node $v_{p-1}$, and
    \item $\Gamma_{k}^2 - \Gamma_k^1$ is obtained from performing the STU-relation on $\Gamma_{k, p-1}$ at the edge connecting the leaf labelled by $k$ and the marked node $v_{k}$.
  \end{enumerate} 
\end{theorem}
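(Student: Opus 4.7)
The plan is to derive the graphical statement directly from the algebraic formula of Theorem~\ref{prop:d1simplified} by pushing each of the four iterated Whitehead products through the isomorphism $\Psi_T \colon \quot{E_{p,p}^1}{} \isoarr \Tca_{p-1}$ of Proposition~\ref{prop:eppcomb}, and then comparing with the graphs produced by performing the two STU-relations on the marked graph $\Gamma_{k,p-1} = \Psi_D(w)$, where $w=[c_1,c_2] \in D_{p}^{\mathrm{sep}}$ has repeated occurrence of $x_{k,p-1}$. Recall the picture of $\Psi_D(w)$ from Construction~\ref{constr:ep-1pcomb}: it is obtained by gluing the trees $\Gamma_1=\Psi_T(c_1)$ and $\Gamma_2=\Psi_T(c_2)$ along both of their pairs of leaves labelled $k$ and $p-1$, yielding the marked nodes $v_k$ and $v_{p-1}$ together with two external leaves labelled $k$ and $p-1$.

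First I will handle $\partial^{p-1}(w)$. Extending $\Psi_T$ in the obvious way to an iterated Whitehead product $[a,b]$ whose factor $a$ is built from $x_{\bullet,p-1}$ and whose factor $b$ is built from $x_{\bullet,p}$ (one simply joins $\Psi_T(a)$ and $\Psi_T(b)$ at their Whitehead roots, producing a trivalent node with the two remaining leaves now labelled $p-1$ and $p$), the first summand $[c_1,c_2']$ of $\partial^{p-1}(w)$ corresponds to the tree in which the residual leaf $p-1$ lies on the $\Gamma_1$-side and $p$ on the $\Gamma_2$-side, while the second summand $[c_1',c_2]$ gives the tree with the roles reversed. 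By direct inspection of the STU-relation (Definition~\ref{def:stu}.i) applied at the edge joining leaf $p-1$ to $v_{p-1}$ in $\Gamma_{k,p-1}$, these two trees are exactly $\Gamma_p^1$ and $\Gamma_p^2$. A completely parallel argument, with the roles of $p-1$ and $k$ swapped, shows that $\partial^k(w)$ corresponds (up to a sign to be determined) to the STU-relation applied at the edge from leaf $k$ to $v_k$, producing $\Gamma_k^1$ and $\Gamma_k^2$.

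The remaining work is sign bookkeeping, which I expect to be the main obstacle. On the algebraic side the signs are $(-1)^{p-1}$ and $(-1)^k$ coming from Theorem~\ref{prop:d1simplified}; on the combinatorial side the isomorphisms $\psi_T$ and $\psi_D$ carry the extra signs $(-1)^{\#L_1 + \#(L_1 \times_{>} L_2)}$ and $(-1)^{\#(L_1 \times_{>} L_2)}$ respectively, and additional signs appear because the cyclic orders at the trivalent nodes produced by $\Psi_T$ depend on the bracketing, on the position of $k$ in each factor, and on whether the residual leaf is $p-1$ or $p$. Exploiting the observation in Remark~\ref{rmk:samebrackets} that the bracketing of $c_1$ and $c_2$ is preserved under $\partial^k$ and $\partial^{p-1}$, the only change at each trivalent node is a possible relabelling or a swap of the two special leaves, which in $\Tca_{p-1}$ corresponds exactly to an AS-move and hence a sign $(-1)$. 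I plan to verify by a small case analysis (splitting on whether $k$ lies in the left or right factor of each bracket along the path from the root to the leaf labelled $k$) that all accumulated signs combine to yield $d^1(\Gamma_{k,p-1})=\Gamma_p^1-\Gamma_p^2-(\Gamma_k^2-\Gamma_k^1)$. The low-dimensional case $p=4$ can be used as a sanity check before passing to the inductive general argument.
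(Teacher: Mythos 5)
Your strategy coincides with the paper's: take $w=[c_1,c_2]\in D_p^{\mathrm{sep}}$ with repeated index $k$, push the formula $d^1(w)=\partial^k(w)+\partial^{p-1}(w)$ of Theorem~\ref{prop:d1simplified} through the isomorphisms of Propositions~\ref{prop:eppcomb} and~\ref{prop:ep-1pcomb}, recognise the four resulting trees as the outputs of the two STU-moves, and then track signs. However, one step is wrong as stated: your extension of $\Psi_T$ to the mixed brackets $[c_1,c_2^{p}]$ and $[c_1^{p},c_2]$ by ``joining at their Whitehead roots'' does not work, because the two factors have \emph{different} root labels ($p-1$ and $p$); merging those two leaves consumes them, so it cannot produce a tree that still has ``two remaining leaves labelled $p-1$ and $p$''. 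The tree $\Gamma_p^1$ retains both residual leaves $p-1$ and $p$, and is obtained by joining $\Psi_T(c_1)$ and $\Psi_T(c_2^{p})$ at the leaves carrying the unique \emph{shared} label $k$ --- exactly the node of the cycle of $\Gamma_{k,p-1}$ that survives the STU-move at $v_{p-1}$. The paper does this and points out that one must then verify that running Construction~\ref{constr:eppcomb} rooted at $k$ instead of at $p$ reproduces the same iterated Whitehead product; that compatibility check is part of the argument, not a formality.

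The second issue is that the sign analysis, which you correctly flag as the main obstacle, is where essentially all of the paper's work lies, and the mechanism you propose (each discrepancy is a local AS-move contributing a factor $-1$, checked by a case analysis on the position of $k$) does not by itself yield the two global facts the statement needs: that the pair $\Gamma_k^2-\Gamma_k^1$ enters with sign \emph{opposite} to $\Gamma_p^1-\Gamma_p^2$, and that $\Gamma_p^1$ carries the \emph{same} sign as $\Gamma_{k,p-1}$. In the paper these follow from parity identities for the exponents $\epsilon_k^2,\epsilon_p^1,\epsilon_w$ coming from the normalisations $(-1)^{\#L_1+\#(L_1\times_{>}L_2)}$, for instance $k+\epsilon_k^2+p-1+\epsilon_p^1=2p-3$ and $\#\left(L_1\times_{>}L_2^{p}\right)=\#\left(L_1\times_{>}L_2\right)+\#\left(L_2\setminus\{k,p\}\right)$ together with $\#L_1+\#\left(L_2\setminus\{k,p\}\right)=p-1$; these are counts over the whole label sets, not node-by-node comparisons. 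Your plan is salvageable and heads in the right direction, but as written the decisive computation is only announced, and the one construction you do spell out would produce the wrong trees if followed literally.
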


For a pictorial illustration of $d^1$, see Figure~\ref{fig:d1form}.
\begin{figure}
\includegraphics[width=\textwidth]{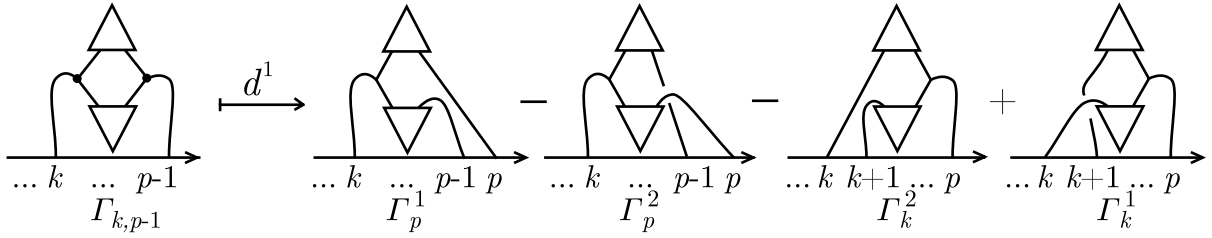}
\caption{An example of $d^1$ applied to a $(k,p-1)$-marked unitrivalent graph. The triangles are placeholders for subgraphs, which stay unmodified.}
\label{fig:d1form}
\end{figure}

\begin{proof}
  Denote by $w = [c_1, c_2] \in D^{\mathrm{sep}}_p$, say with repeated occurrence of $x_{k,p-1}$, a element in~$E_{p-1, p}^{1}$ that corresponds to $\Gamma_{k, p-1}$ under the isomorphism from Proposition~\ref{prop:ep-1pcomb}. 
  Write (without loss of generality) $c_1 = [\dots x_{i,p-1} \dots x_{k,p-1} \dots]$ and $c_2 = [\dots x_{k,p-1} \dots x_{j,p-1} \dots]$.
  Recall the notations from the proofs of Proposition~\ref{prop:eppcomb} and Proposition~\ref{prop:ep-1pcomb}, we have
  \begin{align*}
    \overline{\psi}_T(c_1) &= (-1)^{\epsilon_1}\Gamma_{1} \\
    \overline{\psi}_T(c_2) &= (-1)^{\epsilon_2}\Gamma_{2} \\
    \overline{\psi}_D(w) &= (-1)^{\epsilon_w} \Gamma_w,
  \end{align*}
  where $\Gamma_1$, $\Gamma_2$ and $\Gamma_w$ are depicted in Figure~\ref{fig:gamma12w}.
  We will discuss the signs $\epsilon_1$, $\epsilon_2$ and $\epsilon_w$ at the end.
  \begin{figure}[H]
    \includegraphics[width=\textwidth]{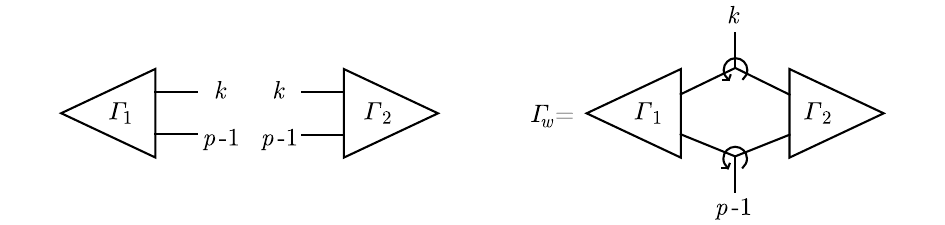} 
    \caption{Visualisations of $\Gamma_1$, $\Gamma_2$ and $\Gamma_w$.}
    \label{fig:gamma12w}
  \end{figure}    
 
  Recall the formula for $\partial^{k}(w)$ from Proposition~\ref{prop:d1simplified}. 
  We write
  \[
    \partial^{k}(w) = (-1)^{k}[c_1^{k}, c_2^{k+1}] + (-1)^{k}[c_{1}^{k+1}, c_2^k],
  \]
  where $c_1^{k}$, $c_{2}^{k+1}$, $c_1^{k+1}$ and $c_2^{k}$ correspond exactly to the four Whitehead brackets in the formula of $\partial^{k}(w)$.
  Thus we have
  \begin{align*}
    \overline{\psi}_T(c_1^k) &= (-1)^{\epsilon_1}\Gamma_{1}^k \\
    \overline{\psi}_T(c_2^{k+1}) &= (-1)^{\epsilon_2}\Gamma_{2}^{k+1} \\
    \overline{\psi}_T(c_1^{k+1}) &= (-1)^{\epsilon_1}\Gamma_{1}^{k+1} \\
    \overline{\psi}_T(c_2^{k}) &= (-1)^{\epsilon_2}\Gamma_{2}^{k},
  \end{align*}
  where $\Gamma_1^k$, $\Gamma_2^{k+1}$, $\Gamma_1^{k+1}$ and $\Gamma_2^{k}$ are the labelled unitrivalent trees depicted below
  \[
    \includegraphics[width=\textwidth]{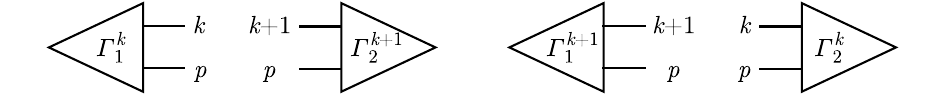}
  \]
       
  Note that the underlying unlabelled trees of $\Gamma_{i}^k$ and $\Gamma_i^{k+1}$ for $i = 1, 2$ are the same as the ones for $\Gamma_i$ respectively.
  Furthermore, we have 
  \begin{align*}
    \overline{\psi}_T([c_1^{k}, c_2^{k+1}]) &= (-1)^{\epsilon_k^1}\Gamma_k^1 \\
    \overline{\psi}_T([c_1^{k+1}, c_2^k]) &= (-1)^{\epsilon_k^2} \Gamma_k^2
  \end{align*}
  where $\Gamma_k^1$ and $\Gamma_k^2$ are the labelled unitrivalent trees depicted in Figure~\ref{fig:gammak}.
  \begin{figure}[!ht]
    \includegraphics[width=\textwidth]{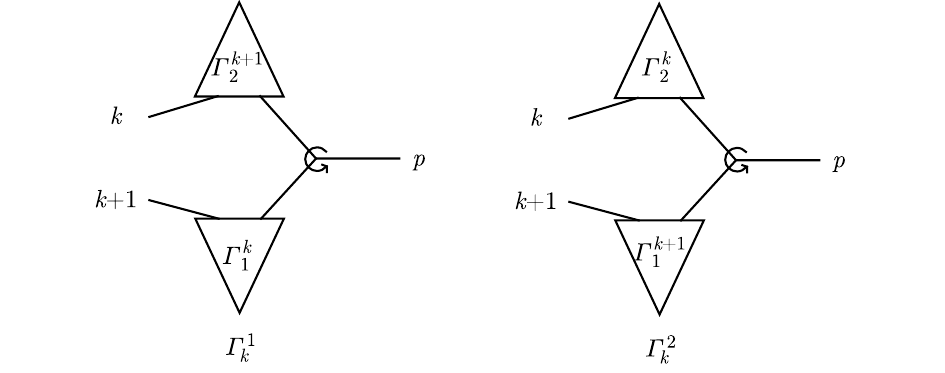}
    \caption{Descriptions of the labelled unitrivalent trees $\Gamma_k^1$ and $\Gamma_k^2$.}
    \label{fig:gammak}
  \end{figure}
  
  We perform the same steps for $\partial^{p-1}(w) = (-1)^{p-1}[c_1, c_2^{p}] + (-1)^{p-1}[c_{1}^{p}, c_2]$, thus obtain
  \begin{align*}
    \overline{\psi}_T(c_1^{p}) &= (-1)^{\epsilon_1}\Gamma_{1}^{p} \\
    \overline{\psi}_T(c_2^{p}) &= (-1)^{\epsilon_2}\Gamma_{2}^{p},
  \end{align*}
  where $\Gamma_1^{p}$ and $\Gamma_2^{p}$ are the labelled unitrivalent trees depicted in Figure~\ref{fig:gammap}. 
  \begin{figure}[!ht]
     \includegraphics[width=\textwidth]{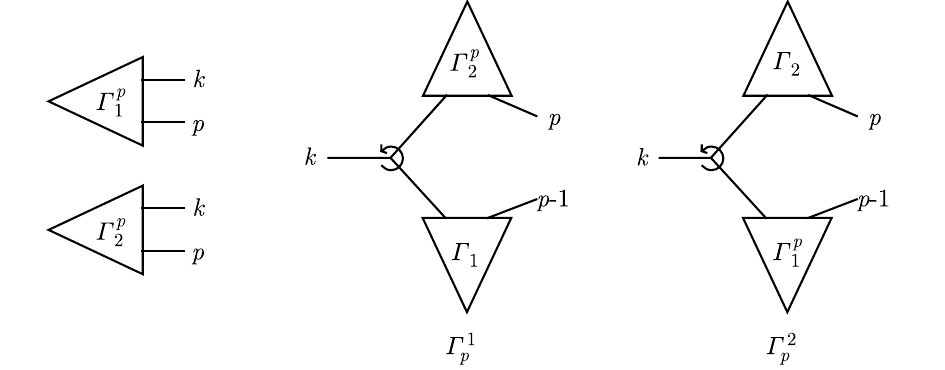}
     \caption{Descriptions of the labelled unitrivalent trees $\Gamma_1^p, \Gamma_2^p, $$\Gamma_p^1$ and $\Gamma_p^2$}
     \label{fig:gammap}
  \end{figure}
  
  Furthermore, $\overline{\psi}_T([c_1, c_2^{p}])= (-1)^{\epsilon_p^1}\Gamma_p^1$ and $ \overline{\psi}_T([c_1^{p}, c_2]) = (-1)^{\epsilon_p^2} \Gamma_p^2$, where $\Gamma_p^1$ and $\Gamma_p^2$ are depicted\footnote{Here we join the trees at the leaves labelled by $k$. Note that Construction~\ref{constr:eppcomb} only considers the case where we join at the leaves labelled by $p$. The construction for $k$ inplace $p$ is analogous, however, one has to check that both constructions apply to the same tree yield the same iterated Whitehead product.} in Figure~\ref{fig:gammap}.
  Reviewing the leaves of $\Gamma_k^i$ and $\Gamma_p^i$ as placed on the oriented line, we get the following illustrations of the trees
  \[
    \includegraphics[width=\textwidth]{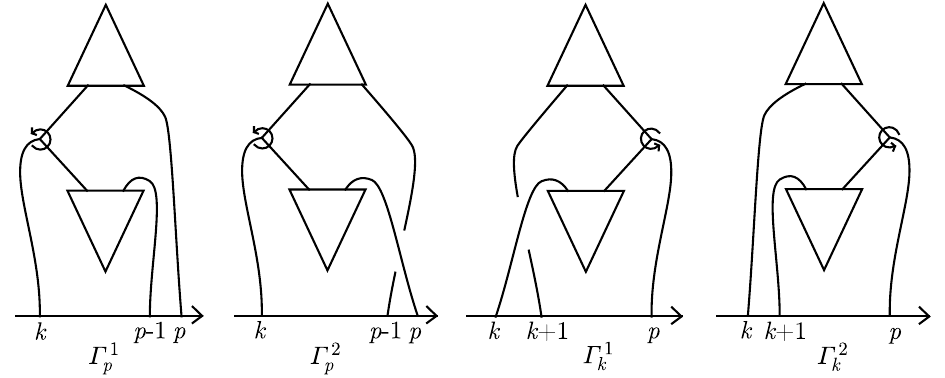}
  \]
       
  Thus, we have that
  \[
    \overline{\psi}_T(d^1(w)) = (-1)^{k} \big( (-1)^{\epsilon_k^1}\Gamma_{k}^1 + (-1)^{\epsilon_k^2}\Gamma_{k}^2 \big) +(-1)^{p-1} \big((-1)^{\epsilon_p^1}\Gamma_{p}^1 + (-1)^{\epsilon_p^2}\Gamma_{p}^2 \big).
  \] 
  Now let us check the signs.
  For $i = 1, 2$, let $L_{i}$ be the set of labels of $\Gamma_i$. 
  Thus we have 
  \[
    \epsilon_w = \epsilon_1 + \epsilon_2 + \# (L_1 \times_{>} L_2).
  \]
  where $\epsilon_1$ and $\epsilon_2$ appeared right at the beginning of the proof.
  Let $L_{i}^{j}$ be the set of labels of $\Gamma_{i}^{j}$ and $L_{i}^{p}$ be the set of labels of $\Gamma_{i}^{p}$, for $j \in \{k, k+1\}$ and $i \in \{1, 2\}$.
  Then we have 
  \[
    \epsilon_k^1 = \epsilon_1 + \epsilon_2 + \#{L_1^k} + \# (L_1^k \times_{>} L_2^{k+1}),
  \]
  \[
    \epsilon_k^2 = \epsilon_1 + \epsilon_2 + \#{L_1^{k+1}} + \# (L_1^{k+1} \times_{>} L_2^{k}), 
  \]
  \[
    \epsilon_p^1 = \epsilon_1 + \epsilon_2 + \#{L_1} + \# (L_1 \times_{>} L_2^p),
  \]
  \[
    \epsilon_p^2 = \epsilon_1 + \epsilon_2 + \#{L_1^p} + \# (L_1^p \times_{>} L_2).
  \]
  Recall Remark~\ref{rmk:samebrackets} and thus observe that $\#{L_1} = \#{L_1^k} = \#{L_1^{k+1}} = \#{L_1^p}$.
  Furthermore, we have $\# (L_1^{k+1} \times_{>} L_2^k) = \# (L_1^k \times_{>} L_2^{k+1}) + 1$, where the $+1$ arises from the pair $(k+1, k)$, and $\#( L_1^p \times_{>} L_2) = \# (L_1 \times_{>} L_2^p) + 1$, where the $+1$ arises from the pair $(p,p-1)$.
  Thus we can write $\overline{\psi}_T(d^1(w))$ as 
  \begin{equation}\label{eq:d1formel}
    \overline{\psi}_T(d^1(w)) = (-1)^{k+ \epsilon_k^2}(\Gamma_k^2 - \Gamma_k^1) + (-1)^{p-1+\epsilon_p^1}(\Gamma_p^1 - \Gamma_p^2).
  \end{equation}
  For the signs we use 
  \begin{align*}
    \# \left( L_1^{k+1} \times_{>} L_2^{k} \right) &= \# \left((L_1 \setminus \{k\}) \times_{>} (L_2 \setminus \{k\}) \right) \\
     &+ \# \left( \{k+1\} \times_{>} (L_2 \setminus \{k\}) \right) \\
     &+ \# \left( (L_1 \setminus \{k\}) \times_{>} \{k\} \right), \\
    \# \left( L_1 \times_{>} L_2^p \right) &= \# \left( ( L_1 \setminus \{k\} ) \times_{>} (L_2 \setminus \{k\}) \right) + \# (L_2 \setminus \{k,p-1\}), \\
    \# (L_2 \setminus \{k,p-1\}) &= \# (\{k+1\} \times_{>} \left(L_2 \setminus \{k\}\right)) + \# (\left(L_2 \setminus \{k\}\right) \times_{>} \{k+1\}) \\
    p-k-2 &= \# \left( ( L_2 \setminus \{k\} ) \times_{>} \{k+1\} \right) + \# \left( ( L_1 \setminus \{k\}) \times_{>} \{k\} \right).
  \end{align*}
  Thus we we have 
  \[
    k+\epsilon_k^2 + p-1 + \epsilon_p^1 = 2p-3,
  \]
  \ie the signs before $(\Gamma_k^2 - \Gamma_k^1)$ and $(\Gamma_p^1 - \Gamma_p^2)$ in Equation~\ref{eq:d1formel} are different.
  
  Consider the sign in front of $\Gamma_{k,p-1}$ and $\Gamma_p^1$ in the equations $\overline{\psi}_D([c_1, c_2]) = (-1)^{\epsilon_w} \Gamma_{k,p-1}$ and $\overline{\psi}_{T}([c_1, c_2^p]) = (-1)^{\epsilon_{p}^1} \Gamma_p^1$.
  First recall that
  \begin{align*}
    \epsilon_w &= \epsilon_1 + \epsilon_2 + \# \left( L_1 \times_{>} L_2 \right), \\
    \epsilon_p^1 &= \epsilon_1 + \epsilon_2 + \# L_1 + \# \left( L_1 \times_{>} L_2^p \right).
  \end{align*}
  Observe that we have $\# \left( L_1 \times_{>} L_2^p \right) = \# \left( L_1 \times_{>} L_2 \right) + \#\left( L_2 \setminus \{k,p\}\right)$, where the term $\# \left( L_2 \setminus \{k,p\}\right)$ arises because of the set of pairs $\{(p-1, x) \in L_1 \times L_2^p \mid x \neq k,p\}$. 
  Moreover we know that $\#{L_1} + \# \left( L_2 \setminus \{k, p\} \right) = p-1$.
  So the sign difference between $\overline{\psi}_D([c_1, c_2])$ and $\overline{\psi}_{T}([c_1, c_2^p])$ is $(-1)^{p-1}$, which cancels with the sign in front of $[c_1, c_2^p]$ in the expression of $\partial^{p-1}(w)$.
  Therefore, $\epsilon_w$ and $\epsilon_p^1$ have the same parity.
  In other words, $\Gamma_{k,p-1}$ and $\Gamma_p^1$ have the same sign. 
  
  In conclusion, we are justified to write
  \[
  d^1\left(\Gamma_{k, p-1}\right) = \Gamma_{p}^1 - \Gamma_{p}^2 - \left(\Gamma_k^{2} - \Gamma_{k}^1\right).  \qedhere
  \]
\end{proof}

\begin{corollary}[Conant]\label{cor:d1imgcomb}
  Let $\tau \in \Tca_{p-1}$, then $\tau \in \im(d^1)$ if and only if $\tau$ is STU\textsuperscript{2}-related to 0.
\end{corollary}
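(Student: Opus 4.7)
The forward implication is immediate from Theorem~\ref{thm:d1comb}: the formula
\[
d^1(\Gamma_{k,p-1}) = (\Gamma_p^1 - \Gamma_p^2) - (\Gamma_k^2 - \Gamma_k^1)
\]
presents $d^1(\Gamma_{k,p-1})$ as the signed difference of two STU-relations applied to the single labelled unitrivalent graph $\Gamma_{k,p-1}$, one at leaf $p-1$ and the other at leaf $k$. By Definition~\ref{def:stu}.ii this is exactly an STU\textsuperscript{2}-relation, so every element of $\im(d^1)$ is STU\textsuperscript{2}-equivalent to zero.

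For the reverse implication, the plan is to pin down which labelled unitrivalent graphs $\Gamma$ can actually produce an STU\textsuperscript{2}-generator landing in $\Tca_{p-1}$, and then to invert the formula of Theorem~\ref{thm:d1comb}. Since an STU-operation preserves the degree of a unitrivalent graph and raises its number of leaves by one, and since trees in $\Tca_{p-1}$ have degree $p-1$ and $p$ leaves, the source graph $\Gamma$ must itself have degree $p-1$ and $p-1$ leaves. The Euler identity $L - T = 2 - 2g$ for a connected unitrivalent graph with $L$ leaves, $T$ trivalent nodes, and first Betti number $g$ then forces $g = 1$; hence $\Gamma$ has a unique simple cycle. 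For both STU-operations to break this cycle (and so yield trees in $\Tca_{p-1}$), both trivalent nodes adjacent to the two chosen leaves must lie on the cycle. Thus $\Gamma$ is an $(m,n)$-marked unitrivalent graph in the sense of Definition~\ref{def:ijmarkedFdiag}, and upon identifying the maximum label $n$ with $p-1$ it represents a class in $\Dca_{p-1} \cong \quot{E_{p-1,p}^1}{\mathrm{tors}}$ via Proposition~\ref{prop:ep-1pcomb}.

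Applying Theorem~\ref{thm:d1comb} to this class then identifies the given STU\textsuperscript{2}-generator, up to sign, with $d^1$ of the corresponding element of $\Dca_{p-1}$. Summing over a presentation of $\tau$ as a $\ZZ$-linear combination of STU\textsuperscript{2}-generators realises $\tau$ as an element of $\im(d^1)$, completing the argument.

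The main technical obstacle will be the label and sign bookkeeping in the backward step: Theorem~\ref{thm:d1comb} is formulated with the maximum label $p-1$ as one of the two marked leaves, and one must carefully verify that an arbitrary STU\textsuperscript{2}-generator (with marked leaves $m < n$) fits this framework after a suitable relabelling, while keeping the signs compatible with the identification of Proposition~\ref{prop:ep-1pcomb}.
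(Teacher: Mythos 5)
Your proposal follows essentially the same route as the paper: the forward implication is read off from the formula in Theorem~\ref{thm:d1comb}, and the reverse implication identifies each STU\textsuperscript{2}-generator landing in $\Tca_{p-1}$ as the result of performing the two STU-operations at the two marked nodes of a $(k,p-1)$-marked unitrivalent graph, hence as $d^1$ of the corresponding element of $\Dca_{p-1}$ via Proposition~\ref{prop:ep-1pcomb}. Your Euler-characteristic argument pinning down the admissible source graphs, and the relabelling/sign caveat you flag at the end, only make explicit steps that the paper's own proof leaves implicit.
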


\begin{proof}
  ``$\Leftarrow$'' It follows from the formula for $d^1$ in the previous theorem that the image of the generators under $d^1$ are STU\textsuperscript{2}-related to 0.
  Thus, any element in $\im(d^1)$ is also STU\textsuperscript{2}-related to 0.
  
  ``$\Rightarrow$''
  It is sufficient to prove that any linear combination of the form $\Gamma_p^1 - \Gamma_p^2 - (\Gamma_k^2 - \Gamma_k^1)$ with $1 \leq k \leq p-2$, lies in the image of $d^1$. 
  Note that $\Gamma_k^2 - \Gamma_k^1$ is the result of performing a STU-relation at the trivalent node adjacent to the leaf with label $k$ in $\Gamma_w = \bar{\psi}_D(w)$, \cf Figure~\ref{fig:gamma12w}.
  Similarly, $\Gamma_p^1 - \Gamma_p^2$ is the result of performing a STU-relation at the trivalent node adjacent to the leaf with label $p-1$ in $\Gamma_w$.
  Therefore, we can obtain the linear combination $\Gamma_p^1 - \Gamma_p^2 - \Gamma_k^2 + \Gamma_k^1$ via performing STU-relations on a $(k,p-1)$-marked unitrivalent graphs at its two marked trivalent nodes.
  By Proposition~\ref{prop:ep-1pcomb}, the domain of the differential $d^1$ is exactly the set of $(k,p-1)$-marked unitrivalent graphs.
\end{proof}

\begin{remark}
  Our proofs of Theorem~\ref{thm:d1comb} and Corollary~\ref{cor:d1imgcomb} supplement the proof of \cite[Proposition~4.8]{Con08} with more details.
  With the notion of marked unitrivalent graph, we are able to give the combinatorial interpretation of the map $d^1$, instead of only its image.
\end{remark}

\begin{corollary}\label{cor:epp2}
  \begin{enumerate}
    \item For $p \geq 4$, the group $E_{p,p}^2$ is isomorphic to the abelian group generated by unitrivalent trees of degree $p-1$, modulo AS-, IHX-, and $\mathnormal{STU}^2$-relations.
    \item For $p = 3$, we have $E_{3,3}^2 \cong \Tca_{2} \cong \ZZ$.
    \item For $p = 0, 1, 2$, we have $E_{p,p}^2 = 0$.
  \end{enumerate}  
\end{corollary}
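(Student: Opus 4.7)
The plan is to recognize that, for each $p$, the outgoing differential from $E_{p,p}^1$ is forced to vanish. Indeed, $d^1$ has the form $E_{p,q}^1 \to E_{p+1,q}^1$, and the latter group is zero whenever $q < p+1$. So $d^1 \colon E_{p,p}^1 \to E_{p+1,p}^1$ is trivial for all $p$, and hence
\[
E_{p,p}^2 \;\cong\; E_{p,p}^1 \bigm/ \operatorname{im}\!\bigl(d^1 \colon E_{p-1,p}^1 \to E_{p,p}^1\bigr).
\]
The three cases of the corollary therefore reduce to identifying this quotient.

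For part (i), with $p \geq 4$, I would combine the isomorphism $E_{p,p}^1 \cong \Tca_{p-1}$ from Proposition~\ref{prop:eppcomb} with the characterisation of the image of $d^1$ from Corollary~\ref{cor:d1imgcomb}: a tree $\tau \in \Tca_{p-1}$ lies in $\operatorname{im}(d^1)$ iff it is STU\textsuperscript{2}-related to $0$. Quotienting then gives exactly the abelian group generated by unitrivalent trees of degree $p-1$ modulo AS-, IHX-, and STU\textsuperscript{2}-relations, as claimed. No further calculation is required here.

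For part (ii), $p = 3$, Proposition~\ref{prop:d1lowdim} tells us the incoming differential is zero, so $E_{3,3}^2 = E_{3,3}^1 \cong \Tca_2$ via Proposition~\ref{prop:eppcomb}. It remains to check $\Tca_2 \cong \ZZ$ by direct enumeration: a labelled unitrivalent tree of degree $2$ has exactly $3$ leaves and $1$ trivalent node, so it is a ``Y''-shaped graph with its three leaves labelled by a permutation of $\{1,2,3\}$; the cyclic order at the unique trivalent node yields two orbits under cyclic rotation, which the AS-relation identifies up to sign. Since there is no internal edge between two trivalent nodes, the IHX-relation is empty at this degree, and hence $\Tca_2$ is free of rank one.

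For part (iii), the cases $p = 0, 1$ are immediate once one observes that $E_{p,p}^1 = 0$: indeed, in the proof of Proposition~\ref{prop:d1lowdim} one already sees $\pi_1(\Embcos^0) = 0$ and $\pi_1(\Embcos^1) = \pi_1(\Conf_1(\RR^2 \times \udisc^1)) \times \pi_1(\usphere^2) = 0$, and an analogous low-dimensional check handles $p = 0$ as a pointed set. For $p = 2$, the same Proposition~\ref{prop:d1lowdim} asserts that $d^1 \colon E_{1,2}^1 \to E_{2,2}^1$ is an \emph{isomorphism}, so the cokernel is trivially $0$. The only mildly subtle point in the whole argument is matching the statements of the referenced results with the specific quotient interpretation above; there is no new calculation to perform.
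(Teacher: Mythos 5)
Your proposal is correct and follows essentially the same route as the paper's (very terse) proof: the outgoing $d^1$ from $E_{p,p}^1$ vanishes for degree reasons, so $E_{p,p}^2$ is the cokernel of the incoming $d^1$, which is then identified via Proposition~\ref{prop:eppcomb}/Lemma~\ref{lem:eppalter} together with Corollary~\ref{cor:d1imgcomb} for $p\geq 4$ and via Proposition~\ref{prop:d1lowdim} in the low-degree cases. The extra details you supply (the enumeration showing $\Tca_2\cong\ZZ$ and the vanishing of $E_{p,p}^1$ for $p=0,1$) are correct and merely make explicit what the paper leaves implicit.
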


\begin{proof}
   \rom{1} follows from Lemma \ref{lem:eppalter} and Corollary \ref{cor:d1imgcomb}.
   \rom{2} follows from the fact that the map $d^1 \colon E_{2,3}^1 \to E_{3,3}^1$ is trivial (Proposition~\ref{prop:d1lowdim}) and $E_{3,3}^1 \cong \Tca_{2}$ (Proposition~\ref{prop:eppcomb})
   \rom{3} follows from Proposition~\ref{prop:d1lowdim}.
\end{proof}

\begin{theorem}[Conant]
  Let $n \in \NN$.
  We have $E_{n, n}^{2} \otimes \QQ$ is isomorphic to the rational vector space $\Aca_{n-1}^{I} \otimes \QQ$ generated by unitrivalent trees of degree $n-1$ modulo AS-, IHX-, and SEP-relations. 
\end{theorem}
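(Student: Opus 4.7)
The plan is to build on Corollary~\ref{cor:epp2}. For $n \geq 4$, that corollary already identifies $E_{n,n}^2$ integrally with $\Tca_{n-1}/\!\sim_{\mathrm{STU}^2}$. Tensoring with $\QQ$, the theorem therefore reduces to showing that the $\mathrm{STU}^2$-relations and the $\mathrm{SEP}$-relations span the same rational subspace of $\Tca_{n-1}\otimes \QQ$, i.e.\ that there is a canonical isomorphism
\[
(\Tca_{n-1}\otimes \QQ)/\!\sim_{\mathrm{STU}^2} \;\isoarr\; \Aca^{I}_{n-1}\otimes \QQ.
\]
The boundary cases $n = 0,1,2,3$ will be handled separately: for $n\leq 2$ both sides are $0$, and for $n=3$ both sides are identified with $\QQ$ generated by the unique unitrivalent tree of degree~$2$, matching the computation $E^{2}_{3,3}\cong\ZZ$ in Corollary~\ref{cor:epp2}.

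First I would establish the inclusion $\mathrm{STU}^2 \subseteq \mathrm{SEP}$ (modulo $\mathrm{AS}$ and $\mathrm{IHX}$). Given a tree $\Gamma\in\Tca_{n-1}$ and two leaves labelled $k$ and $m$, applying an $\mathrm{STU}^2$-move produces an explicit four-term linear combination (see Figure~\ref{fig:stu2equi}). I would show that this combination lies in the $\mathrm{SEP}$-subspace by a direct combinatorial manipulation: resolve each of the two $\mathrm{STU}$-moves locally and rearrange the resulting four trees into pairs that exhibit a $\mathrm{SEP}$-relation at the internal edge produced by the move, up to $\mathrm{AS}$ and $\mathrm{IHX}$ corrections. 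This direction is integral and requires no denominators.

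Second, and this is the heart of the theorem, I would prove the reverse inclusion $\mathrm{SEP}\subseteq \mathrm{STU}^2$ over $\QQ$, following Conant's strategy. Given a $\mathrm{SEP}$-relation associated to an internal edge $e$ of a unitrivalent tree $T$, the idea is to express it as a rational linear combination of $\mathrm{STU}^2$-relations obtained by repeatedly applying the $\mathrm{STU}$-move to leaves produced near $e$ and then symmetrising the result over permutations of these leaves. Concretely, after opening $e$ via a sequence of $\mathrm{STU}$-moves one obtains a symmetric tensor in leaf labels, and averaging over the symmetric group on those labels recovers exactly the $\mathrm{SEP}$-term; the normalisation factor is the order of that symmetric group, which is why tensoring with~$\QQ$ is necessary. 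The equality modulo $\mathrm{STU}^2$ is then verified by telescoping pairs of $\mathrm{STU}$-resolutions, using the $\mathrm{IHX}$-relation to handle interactions between non-adjacent applications of $\mathrm{STU}$.

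The main obstacle is this second inclusion: identifying the exact rational cocycle that writes a $\mathrm{SEP}$-relation as a sum of $\mathrm{STU}^2$-relations. The bookkeeping of signs (via $\mathrm{AS}$) and of the permutation averages is delicate, and the argument genuinely fails integrally — one must invert the orders of the symmetric groups acting on the leaves near the separating edge. Once this symmetrisation identity is in place, the two rational quotients of $\Tca_{n-1}\otimes\QQ$ coincide and the theorem follows by combining with Corollary~\ref{cor:epp2}.
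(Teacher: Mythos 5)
First, note that the paper does not actually prove this statement: its entire proof is the citation \cite[Theorem~3.3]{Con08}, so any substantive argument you give is necessarily a different route from the paper's. Your reduction is the right one, and it is consistent with how the theorem sits in the paper: for $n \geq 4$, Corollary~\ref{cor:epp2} identifies $E_{n,n}^2$ with $\Tca_{n-1}$ modulo the STU\textsuperscript{2}-relation, so the theorem amounts to showing that, modulo AS and IHX, the STU\textsuperscript{2}-relations and the SEP-relations span the same subspace of $\Tca_{n-1}\otimes\QQ$. That is precisely the content of Conant's theorem.

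However, there is a genuine gap: the whole mathematical content lies in the inclusion $\mathrm{SEP}\subseteq\mathrm{STU}^{2}$ over $\QQ$, and you do not prove it. You label it ``the main obstacle'', appeal to ``Conant's strategy'' (which is circular when the goal is to prove Conant's theorem), and gesture at a symmetrisation over leaf permutations without writing down the actual identity, the AS/IHX corrections, or the normalising factor beyond saying it is the order of some symmetric group. A second, more basic problem is that the SEP-relation is never defined --- neither in the paper nor in your proposal --- so none of your manipulations can be checked, including the supposedly easy integral inclusion $\mathrm{STU}^{2}\subseteq\mathrm{SEP}$ and the low-degree cases $n\leq 3$ (where you assert, without verification, that SEP kills nothing in degree $2$ so that both sides are $\QQ$, matching $E^{2}_{3,3}\cong\ZZ$ from Corollary~\ref{cor:epp2}). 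As written, the proposal is a correct reduction plus an IOU for the theorem itself; to complete it you would either have to reproduce Conant's argument in full or simply cite \cite[Theorem~3.3]{Con08} as the paper does.
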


\begin{proof}
  See \cite[Theorem 3.3]{Con08}.
\end{proof}

\begin{remark}
  We refer the readers to \cite{Bar95}, \cite{CT04}, \cite{CT04b} and \cite{Hab00},  for the close relation between Vassiliev knot invariants and unitrivalent graphs.
  As already points out by Bott in \cite{Bo95}, studying the groups $E_{n, n}^{r}$, and especially the passage from $E_{n,n}^2$ to $E_{n, n}^{\infty}$ may be another approach to the theory of Vassiliev invariants.
\end{remark}


\section{Conclusion and further work}\label{sec:furtherwork}

The connection between Vassiliev invariant and the embedding functor $\Emb(\blank)$ can be expressed in the following diagram with $n \geq 3$
\begin{equation}\label{dig:dream}
  \begin{tikzcd}
    & \pi_0(\pT_n \Emb(\I)) & E_{n,n}^{\infty} \arrow[rdd, phantom, "\star"] \arrow[l, hook] & E_{n,n}^2\arrow[l, two heads] & E_{n,n}^1 \arrow[l, two heads] \\
    \pi_0(\Kca) \arrow[ru, "\eta_n(\I)"] \arrow[rd] &  &  &  &  \\
    & \quot{\pi_0(\Kca)}{\sim_{C_n}} \arrow[uu, "\bar{\eta}_n(\I)"] & \Gca_{n-1} \arrow[l, hook] \arrow[uu, "\restr{\bar{\eta}_n(\I)}{\Gca_{n-1}}"] & \frac{\ZZ[\textnormal{UTT}_{n-1}]}{\textnormal{AS, IHX, } \textnormal{STU}^2} \arrow[l, "\textnormal{\cite{CT04}}", two heads] \arrow[uu, "\cong"] & \frac{\ZZ[\textnormal{UTT}_{n-1}]}{\textnormal{AS, IHX}}.\arrow[l, two heads] \arrow[uu, "\cong"]
  \end{tikzcd}
\end{equation}
Let us first explain and give references to some of the notations in the diagram:
\begin{enumerate}
	\item Recall that $E_{n, n}^{i}$ with $i \in \{1, 2, \infty\}$ denotes the diagonal terms of the homotopy Bousfield--Kan spectral sequence associated to the tower of fibrations
		\[
		\cdots \to \pT_n \Emb(\I) \to \pT_{n-1} \Emb(\I) \to \cdots \to \pT_0 \Emb(\I)
		\]
		with
		\[
		E_{n,n}^{\infty} \cong \ker\big( \pi_0(\pT_n \Emb(\I)) \to \pi_0(\pT_{n-1} \Emb(\I)) \big)
		\]
	\item The clasper surgery equivalence $\sim_{C_n}$ is an equivalence relation on the set $\pi_0(\Kca)$ of isotopy classes of knots defined by Habiro in \cite{Hab00}, and he proves that the canonical map $\pi_0(\Kca) \to \quot{\pi_0(\Kca)}{\sim_{C_n}}$ is the universal additive Vassiliev invariant of degree $n-1$.
	\item The group $\Gca_{n-1}$ has a geometric interpretation using gropes. 
		There is an isomorphism 
		\[
		\Gca_{n-1} \cong \ker \big( \quot{\pi_0(\Kca)}{\sim_{C_n}} \to \quot{\pi_0(\Kca)}{\sim_{C_{n-1}}} \big)
		\]
	    A grope is an embedded (CW)-complex in $\RR^3$, whose boundary components are knots.
	    Conant and Teichner \cite{CT04, CT04b} explain the connection among gropes, clasper surgery and Vassiliev invariants. 
\end{enumerate}

In this text we showed the commutativity of the right most square in the above diagram, by giving combinatorial interpretations to our computations of the spectral sequence.
The key ingredient for the calculations is the equivalence between the category of augmented cosimplicial spaces and the category of good functors from $\bOpen{\I}{}^{\op}$ to spaces, which provides us with a cosimplicial space $\Embcos^{\bullet}$ associated to the embedding functor $\Emb(\blank)$.

We find it interesting that the upper row of Diagram~\ref{dig:dream} is algebraic in nature whereas the lower row is geometric and combinatorial.
If Conjecture~\ref{conj:BCKS} holds, the map $\bar{\eta}_n(\I)$ would be an isomorphism of groups. 
In \cite{Kos20} Kosanovic shows that the square marked with ``$\star$'' commutes, thus making Diagram \ref{dig:dream} commutative.
A natural question that arises for us now is whether we can give a geometric or combinatorial interpretation of the Goodwillie--Weiss tower of knots, \eg using gropes or clasper surgeries.
In our forthcoming work \cite{KST}, we are working towards extending the map $\bar{\eta}_n(\I)$ to a map of spaces.
More specifically, we are constructing a space of gropes together with a continuous map to~$\pT_n\Emb(I)$ such that the induced map on path-connected components is $\bar{\eta}_n(\I)$, for every $n \in \NN$ and $n \geq 0$.

\printbibliography

\end{document}